\newtheorem{theorem}{Theorem}[section]
\newtheorem{corollary}[theorem]{Corollary}
\newtheorem{remark}[theorem]{Remark}
\newtheorem{lemma}[theorem]{Lemma}
\newtheorem{proposition}[theorem]{Proposition}
\newtheorem{definition}[theorem]{Definition}
\numberwithin{equation}{section}
\newcommand{\R}{\mathbb R}
\newcommand{\SN}{\mathbb S^N}
 \def\cN{\mathcal{N}}
\def\r{\mathbb{R}}
\def\rn{\mathbb{R}^N}
\def\tilde{\widetilde}
\def\cN{\mathcal{N}}
\def\R{\mathbb{R}}
\def\S{\mathbb{S}}
\title{Fractional $Q$-curvature on the sphere and optimal partitions}
\author{Héctor A. Chang-Lara
, Juan Carlos Fernández \footnote{J.C. Fernández is supported by CONAHCYT grant CBF2023-2024-116 (Mexico) and  by UNAM-DGAPA-PAPIIT grant IN110225 (Mexico). }
,\ and Alberto Saldaña
\footnote{A. Saldaña is supported by CONAHCYT grant CBF2023-2024-116 (Mexico) and by UNAM-DGAPA-PAPIIT grant IN102925 (Mexico).}
}
\date{}
\begin{document}

\maketitle

\begin{abstract}
We study an optimal partition problem on the sphere, where the cost functional is associated with the fractional $Q$-curvature in terms of the conformal fractional Laplacian on the sphere. By leveraging symmetries, we prove the existence of a symmetric minimal partition through a variational approach. A key ingredient in our analysis is a new Hölder regularity result for symmetric functions in a fractional Sobolev space on the sphere. As a byproduct, we establish the existence of infinitely many solutions to a nonlocal weakly-coupled competitive system on the sphere that remain invariant under a group of conformal diffeomorphisms and we investigate the asymptotic behavior of least-energy solutions as the coupling parameters approach negative infinity.

\medskip

\noindent \textbf{Mathematics Subject Classification:} 
35B33, 
35R01 
(primary), 
35R11, 
58J40, 
58J70, 
58J90, 
35S15  
(secondary).

\medskip

\noindent \textbf{Keywords:} Conformal fractional Laplacian, fractional Yamabe problem, critical nonlinearity, cohomogeneity one action, symmetries.   

\end{abstract}


\section{Introduction}

\paragraph{The fractional $Q$-curvature problem.} Conformal geometry has given rise to many challenging problems in differential geometry and analysis, showing a deep interplay between the two areas. Among them, are the prescribed fractional $Q$-curvature problems, which are natural generalizations of the famous Yamabe problem, and consist in finding a conformal change of metric to a given Riemannian manifold in such a way that the fractional $Q$-curvature, a generalization of the scalar curvature, is constant. To be more precise, for any $s\in(0,N/2)$ and any closed Riemannian manifold $(M,g)$ of dimension $N\geq 3$, there exists a self-adjoint pseudodifferential operator $\mathscr{P}_g^s: C^\infty(M)\rightarrow C^\infty(M)$ which is the analogue of the fractional Laplacian $(-\Delta)^s$ on $\mathbb{R}^N$ (the pseudodifferential operator with Fourier symbol $|\cdot|^{2s}$) and satisfies the following conformal transformation law: for any conformal change of metric $\tilde{g}=\psi^{\frac{4}{N-2s}}g$, where $\psi\in  C^\infty(M)$ is positive, 
\begin{equation}\label{Equation:ConformalProperty}
\mathscr{P}^s_{\tilde{g}}(u) = \psi^{-\frac{N+2s}{N-2s}} \mathscr{P}_g^s(\psi u),
\end{equation}
see \cite{GJMS,DM18} and the references therein. We call $\mathscr{P}_g^s$ the conformal fractional Laplacian.

For each $s\in(0,N/2)$, we define the (fractional) \emph{$Q$-curvature} of the metric $g$ associated to the operator $\mathscr{P}_g^s$ as $Q_g^s:=\mathscr{P}_g^s(1)$. If $s=1$, then $\mathscr{P}_g^1:= -\Delta_g + \frac{N-2}{4(N-1)}R_g$ is the conformal Laplacian, where $R_g$ denotes the scalar curvature and $\Delta_g$ is the Laplace-Beltrami operator on $(M,g)$. If $s=2$, then $\mathscr{P}_g^2$ is the Paneitz-Branson operator. In this sense, $Q_g^s$ generalizes the notion of scalar curvature ($Q_g^1=R_g$) and the Paneitz-Branson $Q$-curvature.

In this setting, given a closed Riemannian manifold $(M,g)$, the problem of finding a conformal change of metric in such a way that the fractional $Q$-curvature is constant, is known as the \emph{fractional $Q$-curvature problem}, or \emph{the fractional Yamabe problem} on $(M,g)$. This problem is equivalent to solving
the nonlinear partial differential equation
\begin{equation}\label{fyp}
\mathscr{P}_g^s(u) = |u|^{2^\ast_{s}-2}u\quad\  \text{ on }M,
\end{equation}
where $u\in C^\infty(M)$ and $2^{\ast}_s := \frac{2N}{N-2s}$ is the fractional critical Sobolev exponent. The case $s=1$ corresponds to the famous Yamabe problem, whose understanding has motivated the study of critical elliptic problems for decades, and the cases  $s\in\mathbb{N}$ \cite{ChangBook} and $s\in(0,1)$ \cite{DM18} have also shown an intricate mathematical structure.

\paragraph{The conformal fractional Laplacian on the sphere.} 
There are several equivalent ways to define the conformal fractional Laplacian of order $2s\in(0,2)$ on a Riemannian manifold. A classical approach in conformal geometry involves formulating a scattering problem on an extended manifold, as studied by Graham and Zworski \cite{MR1965361}, Joshi and Sá Barreto \cite{MR3915490}, Witten \cite{MR1633012}, and Chang and González \cite{MR2737789}. Later, Caffarelli and Silvestre \cite{MR2354493} rediscovered and popularized an extension problem to compute the fractional Laplacian, making it widely known in the analysis and PDE communities\footnote{The extension technique was also documented by the probability community through the work of Molchanov and Ostrovski \cite{MR247668}.}. In the survey \cite{DM18}, González highlighted the connections between these methods. More recently, Caselli, Florit-Simon, and Serra have analysed the fractional Sobolev spaces in Riemannian manifolds in \cite{CFS24}.

In this paper, we present a different\textemdash more self-contained\textemdash approach to this operator (in the case of the round sphere $(M,g)=(\mathbb{S}^N,g)$), which is completely independent of scattering theory tools. To be more precise, we define the conformal fractional Laplacian on the sphere via the fractional Laplacian in $\rn$ and the stereographic projection $\sigma:\mathbb{S}^N\smallsetminus\{-e_{N+1}\}\rightarrow\mathbb{R}^N$ given by 
\begin{align}\label{sigma}
z=(z',z_{N+1})\mapsto \sigma(z):=\frac{z'}{1+z_{N+1}},
\end{align}
where $-e_{N+1}$ is the south pole on the sphere $\mathbb{S}^N$. Observe that $\sigma^{-1}(x)= \left(\frac{2x}{1+\vert x\vert^2}, \frac{1-\vert x\vert^2}{1+\vert x\vert^2}\right)$ for $x\in \rn$. 

Then, for $s\in(0,1),$ $u\in C^\infty(\mathbb S^N),$ and $z\in \mathbb S^N$, the conformal fractional Laplacian on the sphere is given by
\begin{align}\label{Psdef}
\mathscr{P}^s_{g}u(z) := c_{N,s} \, p.v.\int_{\SN}\frac{u(z)-u(\zeta)}{|z - \zeta|^{N+2s}}\, dV_g(\zeta) + A_{N,s}\, u(z),
\end{align}
where $|\cdot|$ denotes the euclidean norm in $\r^{N+1}$, $c_{N,s}:= 4^s \pi^{-\frac{N}{2}}\frac{\Gamma(\frac{N}{2}+s)}{\Gamma(2-s)}s(1-s),$ $A_{N,s}:=\frac{\Gamma\left(\frac{N}{2}+s\right)}{\Gamma\left(\frac{N}{2}-s\right)},$ and $V_g$ is the Riemannian volume element associated with the standard metric $g$ on the sphere. Let $H_g^s(\mathbb{S}^N)$ be the closure of $C^\infty(\mathbb{S}^N)$ in $L^2_g(\mathbb{S}^N)$ with respect to the norm 
    \begin{align}\label{Hsnorm}
\|u\|_{H_g^s(\mathbb{S}^N)}:= \left(\frac{c_{N,s}}{2} \int_{\SN}\int_{\SN} \frac{|u(z)-u(\zeta)|^2}{|z-\zeta|^{N+2s}}\, dV_g(z)\, dV_g(\zeta) + A_{N,s}\int_{\SN}u^2\, dV_g\right)^\frac{1}{2},\qquad u\in C^\infty(\mathbb{S}^N).
    \end{align}
    This norm is equivalent to the usual norm in $H_g^s(\mathbb{S}^N)$, see Corollary \ref{cor:equiv} below.
    
We have the following relationship. 
\begin{lemma}\label{lem:1}
    For $u\in C^\infty(\mathbb S^N)$, $v := u\circ \sigma^{-1}$, $z\in \mathbb S^N$, and $x=\sigma(z) \in \mathbb R^N$, it holds that
    \begin{align}\label{eq:2}
    \mathscr{P}^s_{g}u(z) = \psi_s(x)^{-\frac{N+2s}{N-2s}}(-\Delta)^s(\psi_sv)(x), 
    \qquad
    \psi_s(x):=\left(\frac{2}{1+|x|^{2}}\right)^{\frac{N-2s}{2}}.
    \end{align}
\end{lemma}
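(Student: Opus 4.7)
The plan is to perform a direct change of variables in the nonlocal integral defining $\mathscr{P}^s_g u(z)$, pushing the computation from the sphere down to $\mathbb{R}^N$ via the stereographic projection $\sigma$, and then matching the resulting expression against the singular integral representation of $(-\Delta)^s(\psi_s v)$ in $\mathbb{R}^N$. The two geometric identities I will rely on are the chord/Euclidean distance relation
\[
|z-\zeta|^2=\frac{4|x-y|^2}{(1+|x|^2)(1+|y|^2)},\qquad x=\sigma(z),\ y=\sigma(\zeta),
\]
and the volume element transformation $dV_g(\zeta)=\bigl(\tfrac{2}{1+|y|^2}\bigr)^N dy$. Substituting both into the principal value integral, using $u(z)=v(x),$ $u(\zeta)=v(y)$, and simplifying the powers of $(1+|x|^2)$ and $(1+|y|^2)$ using the definition of $\psi_s$, I expect the $N$-dimensional Euclidean measure to absorb exactly into
\[
c_{N,s}\,p.v.\int_{\mathbb{S}^N}\frac{u(z)-u(\zeta)}{|z-\zeta|^{N+2s}}\,dV_g(\zeta)
=\psi_s(x)^{-\frac{N+2s}{N-2s}}\,c_{N,s}\,p.v.\int_{\mathbb{R}^N}\frac{\psi_s(y)(v(x)-v(y))}{|x-y|^{N+2s}}\,dy.
\]
A careful tracking of the exponents $\tfrac{N+2s}{2}$, $\tfrac{N-2s}{2}$ and $2s$ should show that all powers of $2$ and of $(1\!+\!|x|^2)$ combine cleanly into the prefactor $\psi_s(x)^{-\frac{N+2s}{N-2s}}$.

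Next, I will use the algebraic splitting
\[
\psi_s(y)(v(x)-v(y))=\bigl(\psi_s(x)v(x)-\psi_s(y)v(y)\bigr)-v(x)\bigl(\psi_s(x)-\psi_s(y)\bigr),
\]
so that the first summand, integrated against $c_{N,s}|x-y|^{-N-2s}$, reconstructs precisely the singular integral representation of $(-\Delta)^s(\psi_s v)(x)$ in $\mathbb{R}^N$ (here the match between the constant $c_{N,s}$ of the paper and the standard fractional Laplacian constant $\frac{4^s s\Gamma(N/2+s)}{\pi^{N/2}\Gamma(1-s)}$ is the elementary identity $\Gamma(2-s)=(1-s)\Gamma(1-s)$). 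The second summand produces a $-v(x)(-\Delta)^s\psi_s(x)$ contribution.

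To close the identity, what remains is to cancel this correction against the mass term $A_{N,s}u(z)=A_{N,s}v(x)$. This reduces to the well-known Aubin–Talenti/bubble identity
\[
(-\Delta)^s\psi_s(x)=A_{N,s}\,\psi_s(x)^{\frac{N+2s}{N-2s}},
\]
which I view as the main analytical input of the lemma; once invoked, the leftover term becomes $\psi_s(x)^{-\frac{N+2s}{N-2s}}\cdot A_{N,s}\psi_s(x)^{\frac{N+2s}{N-2s}}v(x)=A_{N,s}v(x)$, matching the constant term exactly. Verifying this bubble identity with the precise constant $A_{N,s}=\Gamma(N/2+s)/\Gamma(N/2-s)$ is the main obstacle; I would either cite it from the fractional Laplacian literature (e.g.\ via the Riesz potential/Funk–Hecke expansion of $(1+|x|^2)^{-(N-2s)/2}$, or equivalently the Frank–Lieb sharp HLS constant computation) or derive it through the Fourier symbol and the Bessel-type integral representation of $\psi_s$.

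The minor technical points I expect to address are: (i) justifying the interchange of the $p.v.$ on the sphere with the $p.v.$ in $\mathbb{R}^N$, which follows from the smoothness of $u$ and the fact that $\sigma$ is a smooth diffeomorphism away from the south pole (where the integrand is integrable, since $\psi_s(y)\to 0$ like $|y|^{-(N-2s)}$ while $|x-y|^{-N-2s}\sim |y|^{-N-2s}$, giving an integrable tail at infinity for fixed $x$); and (ii) the case $z=-e_{N+1}$, which reduces by symmetry or by a density/continuity argument once the identity is established on the dense set $\mathbb{S}^N\setminus\{-e_{N+1}\}$.
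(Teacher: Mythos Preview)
Your proposal is correct and follows essentially the same route as the paper's proof (Appendix~\ref{ap:sec}): both use the stereographic change of variables with the chord-length and volume identities you wrote, split off a correction term, and reduce the computation to the bubble identity $(-\Delta)^s\psi_s=A_{N,s}\,\psi_s^{(N+2s)/(N-2s)}$. The one difference worth noting is that, where you would cite this identity, the paper derives it self-containedly by observing that $(t^2+|x|^2)^{s-N/2}$ solves the Caffarelli--Silvestre extension problem with boundary datum equal to the fundamental solution $F_s$, so that $\psi_s$ is (up to a constant) $P_s(\cdot,1)\ast F_s$ and hence $(-\Delta)^s\psi_s$ is a multiple of $P_s(\cdot,1)\ast\delta_0=P_s(x,1)$, from which the constant $A_{N,s}$ is read off directly.
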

Note that \eqref{eq:2} is basically a particular case of the intertwining rule \eqref{Equation:ConformalProperty}. Lemma \ref{lem:1} is shown by direct computations in Appendix \ref{ap:sec}. This approach is also followed in \cite[Proposition 6.4.3]{DM18}, but their procedure still partially relies on scattering theory for the computation of the constant $A_{N,s}$.  Our proof is slightly different and we also show some asymptotic properties of the Fourier symbol of $\mathscr{P}^s_{g}$ that we need for our purposes (see Proposition \ref{Proposition:RestrictedEigenfunctions:intro}), without using any kind of harmonic extension procedures. We believe that this approach is more self-contained and that it is of independent interest, since it can be applied to more general nonlocal operators. 

As suggested by Lemma \ref{lem:1}, we now have the following interesting relationship: if $u\in H_g^s(\mathbb{S}^N)$ is a solution of \eqref{Equation:ConformalProperty}, then $v := u\circ \sigma^{-1}\in H^s(\rn)$ is a solution of
\begin{equation}\label{fyprn}
(-\Delta)^s v = |v|^{2^\ast_{s}-2}v\qquad \text{ in }\mathbb{R}^N.
\end{equation}

\paragraph{The optimal partition problem.}
One of the objectives of this work is to study an \emph{optimal partition problem} associated to the fractional $Q$-curvature problem \eqref{fyp} on the sphere. In general, these kinds of variational problems aim at dividing a manifold into subregions (partitions) in an optimal way, with the goal of minimizing a specific cost functional. They appear in various fields such as physics, geometry, and material sciences, and are connected to eigenvalue optimization, shape optimization, and geometric measure theory. See, for instance, the survey \cite{T24} and, in the fractional setting, see \cite{R18,TZ20,Z15,TVZ16}.

Showing the existence of an optimal partition is a difficult task, which often requires a mix of good compactness properties for minimizing sequences (of sets) and a robust regularity theory to ensure that the minimizer is an admissible partition. These problems are even more challenging when considering a cost functional associated to the fractional $Q$-curvature, which is in terms of a nonlocal equation with a critical exponent\textemdash well known for its compactness pathologies.  

To overcome these obstacles, one can use a suitable symmetry group that enhances the compactness properties of the problem (in fact, one can show that, without symmetries, a minimizer is never achieved in this setting as a consequence of the invariance under scalings of critical problems). In case of the round sphere $(\mathbb{S}^N,g)$, for positive integers $m$ and $n$  such that $m,n\geq 2$ and $m+n=N+1\geq 4$, we fix the subgroup of $O(N+1)$,
\begin{align}\label{Gdef}
G:=O(m)\times O(n),    
\end{align}
where $O(k)$ is the group of linear isometries of $\mathbb R^k.$ A significant feature of these symmetries is that the space of $[O(m)\times O(n)]$-orbits in $\mathbb{S}^N$ is one dimensional and diffeomorphic to a closed interval. This is used to avoid the lack of compactness via concentration at points. These symmetries were first used by Ding \cite{weiyue1986conformally} to establish the existence of infinitely many symmetric invariant sign-changing solutions to the Yamabe problem \eqref{fyprn} with $s=1$.

A subset $X\subset\mathbb{S}^N$ is \emph{$G$-invariant} if $\gamma z\in X$ for every $z\in X$ and $\gamma\in G$. The number of partitions is given by a fixed number $\ell\in \mathbb N$. Then, the \emph{set of admissible partitions} is
\begin{align*}
 \mathcal{P}_\ell := \Big\{   \{ U_1,\ldots, U_\ell \} \: : \: U_i \neq\emptyset \text{ is } \text{$G$-invariant, open in }\mathbb{S}^N, \text{ and } U_i\cap U_j =\emptyset 
  \text{ if } i\neq j \text{ for all }i,j =1,\ldots,\ell \Big\}.
\end{align*}
Next, we define the cost functional to be minimized among the elements in $\mathcal{P}_\ell$. A function $u:\mathbb{S}^N\to\mathbb{R}$ is $G$-\emph{invariant} if $u(\gamma z)=u(z)$ for every $\gamma\in G$ and $z\in\mathbb{S}^{N}$. For any nonempty $G$-invariant open subset $U$ of $\mathbb{S}^N$, consider the problem
\begin{equation}\label{Equation:DirichletProblemSphere}
\mathscr{P}_g^s u =  \vert u\vert^{2_{s}^\ast-2}u  \ \text{ in } U,\qquad 
u=0 \   \text{ in }\mathbb{S}^N\smallsetminus  U,\qquad 
u \text{ is } \text{$G$-invariant}.
\end{equation}
This problem has a nontrivial least energy solution (see Section~\ref{Section:ExistenceDirichlet}) which achieves the least energy level
\[
c_U :=\inf\left\{\frac{s}{N}\int_{U}\vert u\vert^{2_{s}^\ast} \;:\; u\neq 0, \ u \text{ solves~\eqref{Equation:DirichletProblemSphere} } \right\}.
\]
We use $c_U$ as cost function. Hence, the optimal partition problem that we are interested in can be rephrased as finding a minimizer that achieves
\begin{equation}\label{Equation:OptimalPartitionProblemSphere}
\inf_{ \{ U_1,\ldots, U_\ell \} \in \mathcal{P}_\ell } \sum_{i=1}^\ell c_{U_i}.
\end{equation}
We call this the \emph{$(G,\ell)$-optimal partition problem.} If this infimum is achieved, then we call its minimizer a solution to the $(G,\ell)$-optimal partition problem. Our main result states that~\eqref{Equation:OptimalPartitionProblemSphere} is achieved and characterizes its minimizer. In order to give a precise statement, we introduce some notation. Let $H_g^s(\mathbb S^N)^{G} := \{u\in H_g^s(\mathbb S^N) \ | \ \text{$u$ is $G$-invariant}\}$. We use $\mathbb{B}^d$ and $\mathbb{S}^{d-1}$ to denote the open unit ball and the unit sphere in $\mathbb{R}^d$, respectively, and we use $\cong$ whenever there is a $G$-invariant diffeomorphism between two sets.

\begin{theorem}\label{Theorem:MainOptimalPartitionPart1}
Let $N\geq 3$, $s\in(1/2,1),$ $\ell\in\mathbb{N}$, and let $G$ be as in \eqref{Gdef}. Then there is a solution $\{U_1,\ldots,U_\ell\}\in\mathcal{P}_\ell$ to the $(G,\ell)$-partition problem~\eqref{Equation:OptimalPartitionProblemSphere}. Moreover, $U_1,\ldots,U_\ell$ are smooth, connected, $\overline{U_1\cup\cdots\cup U_\ell}=\mathbb{S}^N$, and
\begin{itemize}
\item[$(i)$] $U_1\cong\mathbb{S}^{m-1}\times \mathbb{B}^{n}$, \ $U_i\cong\mathbb{S}^{m-1}\times\mathbb{S}^{n-1}\times(0,1)$ if  $i=2,\ldots,\ell-1$, and \ $U_\ell\cong\mathbb{B}^{m}\times \mathbb{S}^{n-1}$,
\item[$(ii)$] $\overline{U}_i\cap \overline{U}_{i+1}\cong\mathbb{S}^{m-1}\times\mathbb{S}^{n-1}$ and\quad $\overline{U}_i\cap \overline{U}_j=\emptyset$\, if\, $|j-i|\geq 2$.
\end{itemize}
\end{theorem}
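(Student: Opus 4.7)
The plan is to convert~\eqref{Equation:OptimalPartitionProblemSphere} into a constrained variational problem on $\ell$-tuples of $G$-invariant functions with mutually disjoint supports, produce a minimizer via symmetry-driven compactness, and extract the geometric structure from the one-dimensional orbit space $\mathbb{S}^N/G \cong [0,\pi/2]$. Concretely, I would show that~\eqref{Equation:OptimalPartitionProblemSphere} equals
\[
\inf\left\{ \sum_{i=1}^\ell \frac{s}{N}\int_{\mathbb{S}^N} |u_i|^{2_s^\ast}\,dV_g \,:\, (u_1,\ldots,u_\ell) \in \mathcal{T}_\ell \right\},
\]
where $\mathcal{T}_\ell$ consists of $\ell$-tuples of nontrivial nonnegative $u_i \in H_g^s(\mathbb{S}^N)^G$ satisfying a Nehari-type constraint for $\mathscr{P}_g^s$ together with $u_i u_j = 0$ a.e.\ for $i\neq j$. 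Given an admissible partition, the nonnegative least-energy solutions of~\eqref{Equation:DirichletProblemSphere} on each $U_i$ produce such a tuple; conversely, the open supports of such a tuple yield a candidate partition once regularity is in hand. Compactness is the key: since the principal $G$-orbits have positive dimension $m-1+n-1=N-1\geq 2$, the Sobolev embedding $H_g^s(\mathbb{S}^N)^G \hookrightarrow L^{2_s^\ast}(\mathbb{S}^N)$ is compact. A minimizing sequence therefore converges strongly in $L^{2_s^\ast}$ and weakly in $H_g^s$, both the orthogonality $u_i u_j=0$ and the Nehari constraints pass to the limit, and nontriviality of each component $u_i^\ast$ is preserved by comparison with the finite Dirichlet values $c_U$.

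Next, I would invoke the Hölder regularity result for symmetric $H_g^s$-functions announced in the abstract, which renders each $u_i^\ast$ continuous. Then $U_i := \{u_i^\ast > 0\}$ is an open $G$-invariant set, and a strong maximum principle for $\mathscr{P}_g^s$ gives nonemptiness. The identity $\overline{U_1\cup\cdots\cup U_\ell}=\mathbb{S}^N$ follows from the domain monotonicity $U\subseteq V \Rightarrow c_V \leq c_U$: any open $G$-invariant region disjoint from all $\overline{U_i}$ could be absorbed into some $U_i$ to strictly reduce its cost, contradicting minimality. For the topological structure in $(i)$--$(ii)$ I would descend to the quotient $[0,\pi/2]$, whose endpoints correspond to the singular orbits $\mathbb{S}^{m-1}$ and $\mathbb{S}^{n-1}$, and argue that each $U_i$ projects to an interval: a disconnected projection would allow a one-dimensional symmetric rearrangement strictly decreasing the fractional seminorm of $u_i^\ast$ while preserving the critical $L^{2_s^\ast}$-norm, against minimality. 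The standard tubular neighborhood descriptions of the singular and principal orbits then identify the three topological types listed in $(i)$: a half-open subinterval containing $0$ lifts to $\mathbb{S}^{m-1}\times\mathbb{B}^n$, one containing $\pi/2$ lifts to $\mathbb{B}^m\times\mathbb{S}^{n-1}$, and an interior subinterval lifts to the cylinder $\mathbb{S}^{m-1}\times\mathbb{S}^{n-1}\times(0,1)$. Smoothness of $\partial U_i$ and the contact structure in $(ii)$ are automatic because interior separating points in $(0,\pi/2)$ correspond to regular orbits $\mathbb{S}^{m-1}\times\mathbb{S}^{n-1}$.

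The hard part will be combining the Hölder regularity with the rearrangement step. Hölder continuity for critical-growth solutions of the Dirichlet problem~\eqref{Equation:DirichletProblemSphere} is delicate because of the nonlocal action of $\mathscr{P}_g^s$ across $\partial U$, and the symmetric estimate must be strong enough to control the level sets $\{u_i^\ast>0\}$ up to and including the singular orbits where the $G$-action has positive isotropy. Moreover, monotonicity of the Gagliardo-type seminorm of $\mathscr{P}_g^s$ under symmetric decreasing rearrangement on the orbit interval is not a classical statement; I would establish it via the coarea formula together with the one-dimensional reduction of the double integral in~\eqref{Hsnorm}, or bypass it with a direct variational comparison based on a unique continuation property for $\mathscr{P}_g^s$-harmonic functions. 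The remaining steps are then a careful but essentially standard assembly in the spirit of one-dimensional optimal partition problems.
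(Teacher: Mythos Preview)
Your direct-minimization approach over $\mathcal{T}_\ell$ is valid and in fact more elementary than the paper's route, but it differs genuinely. The paper does not minimize over $\mathcal{T}_\ell$ directly; it obtains the minimizer as the phase-separation limit of least-energy solutions to the competitive system~\eqref{Equation:SystemSphere} as $\eta_{ij}\to-\infty$ (Theorem~\ref{Theorem:MainOptimalPartition}, proved via its Euclidean counterpart). This buys the paper Theorem~\ref{Theorem:MainOptimalPartition} itself as a byproduct, whereas your approach would prove only Theorem~\ref{Theorem:MainOptimalPartitionPart1}. For connectedness and the covering property, the paper uses neither rearrangement nor a global comparison but a strict domain monotonicity $c_\Lambda<\min\{c_{\Lambda_1},c_{\Lambda_2}\}$ for adjacent orbit-intervals $\Lambda_1,\Lambda_2\subset\Lambda$, obtained from the \emph{unique continuation principle} for the fractional Laplacian: an extension by zero of a least-energy solution on $\Lambda_1$ cannot be a solution on the larger $\Lambda$. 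This is exactly the bypass you suggest at the end, and it is far more robust than a weighted fractional P\'olya--Szeg\H{o} inequality on the orbit interval, which is not available.

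Two technical points to tighten in your outline. First, the Nehari constraint does not pass to the weak limit as stated: weak lower semicontinuity of the norm and strong $L^{2_s^\ast}$-convergence give only $\|u_{\infty,i}\|^2\leq |u_{\infty,i}|_{2_s^\ast}^{2_s^\ast}$, so you must rescale by $t_i\in(0,1]$ back onto Nehari and then argue $t_i=1$ by optimality (the paper does exactly this). Second, the H\"older regularity of Proposition~\ref{Proposition:Regularity} holds only on $\mathbb{S}^N\smallsetminus\mathcal{Z}$, not across the singular orbits; the paper handles this in two stages, first producing open sets $\Theta_i\subset\mathbb{S}^N\smallsetminus\mathcal{Z}$, then using the orbit-interval structure (Theorem~\ref{thm:partition}) to enlarge to smooth $\Omega_i$ containing the singular orbits, after which full continuity follows from smoothness of $\partial\Omega_i$. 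Your remark that the estimate must reach ``up to and including the singular orbits'' overstates what is needed.
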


Via the stereographic projection, the $(G,\ell)$-optimal partition problem ~\eqref{Equation:OptimalPartitionProblemSphere} is in a one-to-one correspondence with an optimal partition problem in $\rn$. To be more precise, note that each $\gamma\in G$ is an isometry of the unit sphere $\mathbb{S}^N$, and gives rise to a conformal diffeomorphism $\tilde\gamma:\rn\to\rn$ given by the conjugation $\tilde\gamma:=\sigma\circ\gamma^{-1}\circ\sigma^{-1}$.  A subset $\Omega$ of $\rn$ is called \emph{$G$-invariant} if $\tilde\gamma x\in\Omega$ for all $x\in\Omega$ and all $\gamma\in G$, and a function $v:\Omega\to\r$ is said to be \emph{$G$-invariant} if 
\begin{align*}
|\det D\tilde \gamma(x)|^\frac{1}{2^*_s}v(\tilde\gamma x)=v(x)\qquad \text{for all \ }\gamma\in G, \ x\in\Omega,    
\end{align*}
where $D\tilde \gamma$ denotes the Jacobian matrix of $\tilde\gamma.$

Let $\Omega$ be a nonempty open $G$-invariant subset of $\rn$ and consider the problem 
\begin{equation} \label{eq:3a:intro}
(-\Delta)^s v = |v|^{2_s^*-2}v \ \text{ in }\Omega, \qquad v=0\text{ in }\rn\backslash \Omega,\qquad v \ \text{ is $G$-invariant.}
\end{equation}
We show in Section \ref{Section:ExistenceDirichlet} that \eqref{eq:3a:intro} has a positive least energy solution which achieves the least energy level
\[
c_\Omega :=\inf\left\{\frac{s}{N}\int_{\Omega}\vert v\vert^{2_{s}^\ast} \;:\; v\neq 0, \ v \text{ solves~\eqref{eq:3a:intro}} \right\}.
\]
Hence, the optimal partition problem in the Euclidean space is
\begin{equation}\label{OPPRN}
\inf_{ \{\Omega_1,\ldots, \Omega_\ell \} \in \mathfrak{P}_\ell } \sum_{i=1}^\ell c_{\Omega_i},
\end{equation}
where 
\begin{align}\label{Prn}
\mathfrak{P}_\ell:=\{\{\Omega_1,\ldots,\Omega_\ell\}:\; \Omega_i\neq\emptyset \text{ is } G\text{-invariant and open in }\mathbb{R}^N\;\text{ for all } i=1,\ldots,\ell,\text{ and }\Omega_i\cap\Omega_j=\emptyset\text{ if }i\neq j\}.    
\end{align}

\begin{theorem} \label{thm:main_RNPart1}
Let $N\geq 3$, $s\in(1/2,1),$ $\ell\in\mathbb{N}$, $G$ as in \eqref{Gdef}, and let $\{U_1,\ldots,U_\ell\}$ be the solution of~\eqref{Equation:OptimalPartitionProblemSphere} given by Theorem~\ref{Theorem:MainOptimalPartitionPart1}.  Set
$\Omega_i:=\sigma(U_i)$ for $i=1\ldots,\ell-1$, $\Omega_\ell:=\sigma(U_\ell\backslash\{-e_{N+1}\})$, where $\sigma$ is the stereographic projection. Then $\{\Omega_1,\ldots,\Omega_\ell\}\in \mathfrak{P}_\ell$ achieves the infimum~\eqref{OPPRN}. Moreover, $\Omega_1,\ldots,\Omega_\ell$ are smooth and connected, $\overline{\Omega_1\cup\cdots\cup \Omega_\ell}=\mathbb{R}^{N}$, $\Omega_1,\ldots,\Omega_{\ell-1}$ are bounded, $\Omega_\ell$ is unbounded, and
\begin{itemize}
\item[$(i)$] $\Omega_1\cong\mathbb{S}^{m-1}\times \mathbb{B}^{n}$,\quad $\Omega_i\cong\mathbb{S}^{m-1}\times\mathbb{S}^{n-1}\times(0,1)$ if  $i=2,\ldots,\ell-1$, and\quad $\Omega_\ell\cong\mathbb{B}^m\times \mathbb{R}^{n-1}$,
\item[$(ii)$] $\overline{\Omega}_i\cap \overline{\Omega}_{i+1}\cong\mathbb{S}^{m-1}\times\mathbb{S}^{n-1}$ and\quad $\overline{\Omega}_i\cap \overline{\Omega}_j=\emptyset$\, if\, $|j-i|\geq 2$.
\end{itemize}
\end{theorem}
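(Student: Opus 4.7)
The plan is to transfer the sphere minimizer provided by Theorem~\ref{Theorem:MainOptimalPartitionPart1} to $\mathbb{R}^N$ through the stereographic projection $\sigma$, exploiting the conformal intertwining of Lemma~\ref{lem:1}. The central observation is that the correspondence $u \mapsto v := \psi_s\,(u \circ \sigma^{-1})$ is an isometry between suitable $G$-invariant subspaces which carries solutions of \eqref{Equation:DirichletProblemSphere} on a $G$-invariant open $U \subset \mathbb{S}^N$ to solutions of \eqref{eq:3a:intro} on $\sigma(U) \subset \mathbb{R}^N$. By Lemma~\ref{lem:1}, $u$ solves $\mathscr{P}_g^s u = |u|^{2_s^\ast-2}u$ in $U$ if and only if $v$ solves $(-\Delta)^s v = |v|^{2_s^\ast-2}v$ in $\sigma(U)$, using the identity $\psi_s^{\frac{N+2s}{N-2s}} = \psi_s^{2_s^\ast-1}$. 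The $G$-invariance of $v$ in the $\mathbb{R}^N$ sense (which carries the Jacobian factor $|\det D\tilde\gamma|^{1/2_s^\ast}$) follows from the $G$-invariance of $u$ together with the conformality identity $|\det D\tilde\gamma(x)|^{1/2_s^\ast} = \psi_s(x)/\psi_s(\tilde\gamma x)$, a direct computation from the stereographic formula. Finally, the pullback relation $dV_g = \psi_s^{2_s^\ast}\,dx = (2/(1+|x|^2))^N dx$ yields $\int_U |u|^{2_s^\ast} dV_g = \int_{\sigma(U)} |v|^{2_s^\ast} dx$, so $c_U = c_{\sigma(U)}$ whenever $-e_{N+1} \notin U$.

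For the exceptional piece $U_\ell$, which contains $-e_{N+1}$, the identity $c_{U_\ell} = c_{U_\ell \setminus \{-e_{N+1}\}} = c_{\Omega_\ell}$ follows from the vanishing of $H^s$-capacity at single points (valid since $2s < 2 \leq N$), so removing $\{-e_{N+1}\}$ does not alter the set of admissible competitors or their energies. Hence $\sum_{i=1}^\ell c_{\Omega_i} = \sum_{i=1}^\ell c_{U_i}$. The family $\{\Omega_1,\ldots,\Omega_\ell\}$ belongs to $\mathfrak{P}_\ell$ because openness, nonemptiness, disjointness, and $G$-invariance are each inherited from $\{U_i\}$ via the $G$-equivariance of $\sigma$ on $\mathbb{S}^N \setminus \{-e_{N+1}\}$. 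For the optimality, given any competitor $\{\tilde\Omega_i\} \in \mathfrak{P}_\ell$, set $\tilde U_i := \sigma^{-1}(\tilde\Omega_i)$ and adjoin $-e_{N+1}$ to one of them (at no capacity cost); this yields a family in $\mathcal{P}_\ell$ with $\sum c_{\tilde U_i} = \sum c_{\tilde\Omega_i}$ by the first step. Optimality of $\{U_i\}$ on the sphere then gives $\sum c_{\Omega_i} = \sum c_{U_i} \leq \sum c_{\tilde U_i} = \sum c_{\tilde\Omega_i}$.

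The geometric assertions transfer through the diffeomorphism $\sigma: \mathbb{S}^N \setminus \{-e_{N+1}\} \to \mathbb{R}^N$: smoothness, connectedness, the diffeomorphism types of $\Omega_1,\ldots,\Omega_{\ell-1}$, the closure identity $\overline{\bigcup_i \Omega_i} = \mathbb{R}^N$, boundedness of $\Omega_1,\ldots,\Omega_{\ell-1}$ (their closures in $\mathbb{S}^N$ stay away from the south pole), and unboundedness of $\Omega_\ell$ (which contains the image of a punctured neighborhood of $-e_{N+1}$) all follow directly. The most delicate item is the identification $\Omega_\ell \cong \mathbb{B}^m \times \mathbb{R}^{n-1}$: it rests on the observation that $\sigma$ maps the singular $G$-orbit $\{0\}\times\mathbb{S}^{n-1} \subset U_\ell$ (minus $-e_{N+1}$) $G$-equivariantly onto the affine subspace $\{0\}\times\mathbb{R}^{n-1} \subset \mathbb{R}^N$, after which a $G$-equivariant tubular construction around this subspace realizes the stated product structure. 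The main conceptual hurdle is this final topological transfer; the analytic core is a clean change-of-variables argument built on the conformality of $\sigma$.
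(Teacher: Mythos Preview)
Your transfer argument via the stereographic isometry is correct and is essentially how the paper establishes the equivalence between the two optimal partition problems (see the opening paragraph of Section~\ref{sec:euclidean} together with Lemma~\ref{Lemma:EquivalentProblemsGinvariant} and Proposition~\ref{Proposition:IsometricIsomorphism}). One remark on organization: the paper actually runs the logic in the opposite direction\textemdash it proves the $\mathbb{R}^N$ result (Theorem~\ref{thm:main_RN}) directly via phase separation (Theorems~\ref{thm:partition} and~\ref{thm:phase_separation}), then deduces the sphere result from it\textemdash so Theorem~\ref{thm:main_RNPart1} is not given a separate proof there; it is tautological once the equivalence $c_U=c_{\sigma(U)}$ is in place. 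Your route, starting from Theorem~\ref{Theorem:MainOptimalPartitionPart1} and pushing forward through~$\sigma$, is an equally valid and arguably more literal reading of the statement.

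One small slip: in your optimality step you write ``adjoin $-e_{N+1}$ to one of them (at no capacity cost)''. This is unnecessary and, as written, not quite correct: adding an isolated point to an open set need not keep it open. The fix is simply to drop that step. The preimages $\tilde U_i:=\sigma^{-1}(\tilde\Omega_i)$ are already nonempty, $G$-invariant, open in $\mathbb{S}^N$, and pairwise disjoint, so $\{\tilde U_1,\ldots,\tilde U_\ell\}\in\mathcal{P}_\ell$ without modification (the definition of $\mathcal{P}_\ell$ imposes no covering condition), and $c_{\tilde U_i}=c_{\tilde\Omega_i}$ by your isometry argument. The rest of your proof then goes through unchanged.
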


In Figure~\ref{fig:enter-label} we illustrate the shape of the optimal partitions given by Theorem~\ref{thm:main_RNPart1}.
\begin{figure}[!ht]
    \centering
    \includegraphics[width=0.5\linewidth]{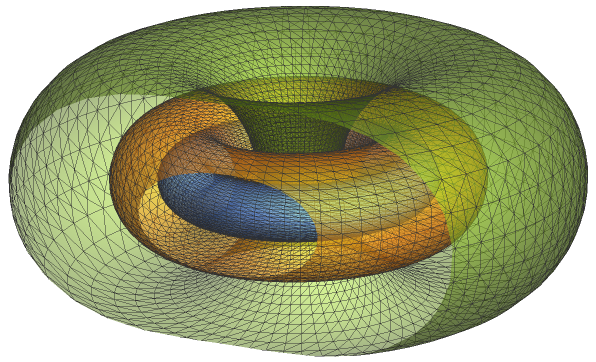}
    \caption{Illustration of the optimal partition $\{\Omega_1,\Omega_2,\Omega_3,\Omega_4\}$ of $\r^3$ given by Theorem~\ref{thm:main_RNPart1} with $\ell=4$ and $m=n=2$. Here $\Omega_1$ is the interior of the smallest torus, $\Omega_4$ is the exterior of the largest torus, and $\Omega_2$ and $\Omega_3$ are the spaces inbetween separated by the middle torus.}
    \label{fig:enter-label}
\end{figure}

\paragraph{A proof via phase separation.}
As mentioned earlier, optimal partition problems are hard to solve (even under the presence of symmetries).  To show Theorem~\ref{Theorem:MainOptimalPartitionPart1}, we use an idea coming from physics. Phase separation in Bose-Einstein condensates (BEC) occurs when two or more components of a BEC \textemdash such as different atomic species, spin states, or hyperfine states\textemdash demix, forming distinct spatial regions instead of a fully mixed state. This phenomenon arises due to the interplay between inter-component interactions and intra-component interactions. Hence, the strategy is to show first the existence of \emph{least-energy} solutions to a nonlinear competitive system (similar to the one modelling BEC but with critical exponents) and to study its limit profiles as a competition parameter goes to infinity, showing that the support of these limiting profiles gives rise to an optimal partition.  This method was introduced in the seminal paper \cite{conti2002nehari} and has been used in a wide variety of settings, including different operators, manifolds, nonlinearities, symmetries, etc., see \cite{ClappPistoiaTavares2024,ClappFernandezSaldana2021,ClappPistoia2018,CSS21,CSSV25,CFS25,FernandezPalmasTorres2024,T24} and the references therein. In particular, in \cite{ClappFernandezSaldana2021}, these ideas have been used to solve~\eqref{Equation:OptimalPartitionProblemSphere} with $s\in\mathbb{N}$. Here we adapt this strategy to the fractional case $s\in(1/2,1)$.

More specifically, consider the weakly coupled fractional competitive system
\begin{equation}\label{Equation:SystemSphere}
\mathscr{P}_g^s u_i = \vert u_i\vert^{2_{s}^\ast-2}u_i + \sum_{j\neq i}^\ell\eta_{ij} b_{ij}\vert u_j\vert^{a_{ij}} \vert u_i\vert^{b_{ij}-2}u_i \text{ on }\mathbb{S}^N,\qquad 
u_i \text{ is } \text{$G$-invariant}, \qquad  \text{ for }i=1,\ldots,\ell,
\end{equation}
where 
\begin{align*}
\eta_{ij}=\eta_{ji}<0,\qquad a_{ij}, b_{ij}>1,\qquad a_{ij}= b_{ji},\qquad a_{ij} + b_{ij} = 2_{s}^\ast,     
\end{align*}
and $\mathscr{P}_g^s $ is the conformal fractional Laplacian on $\mathbb{S}^N$. In Proposition \ref{Proposition:MainFractionalSystems}, adjusting the method in \cite{ClappSzulkin2019}, we show the existence of infinitely many solutions to \eqref{Equation:SystemSphere} and, in particular, the existence of a least-energy $G$-invariant solution to \eqref{Equation:SystemSphere} that is \emph{fully nontrivial}, namely, that each component is different from zero. The following result characterizes its limit as the competition parameter $\eta_{ij}$ goes to minus infinity, and it is of independent interest. 

\begin{theorem}\label{Theorem:MainOptimalPartition}
Let $s\in(1/2,1)$ and $\ell\in\mathbb{N}$. For each $i\neq j$, $k\in\mathbb{N}$, let $\eta_{ij,k}<0$ be such that $\eta_{ij,k}=\eta_{ji,k}$ and $\eta_{ij,k}\to -\infty$ as $k\to\infty$. Let $(u_{k,1},\ldots,u_{k,\ell})$ be a positive least energy fully nontrivial $G$-invariant solution to system~\eqref{Equation:SystemSphere} with $\eta_{ij}=\eta_{ij,k}$. Then, after passing to a subsequence, we have that $u_{k,i}\to u_{\infty,i}$ strongly in $H_g^{s}(\mathbb{S}^N)$, $u_{\infty,i}\in  C^{0}(\mathbb{S}^N)$, $u_{\infty,i}\geq 0$ and $u_{\infty,i}|_{U_i}$ is a positive least energy $G$-invariant solution of~\eqref{Equation:DirichletProblemSphere} in 
$U_i:={\{x\in\mathbb{S}^N \;:\; u_{\infty,i}(x)>0\}}$ for $i=1,\ldots,\ell.$ Moreover, up to a relabeling, $\{U_1,\ldots,U_\ell\}\in\mathcal{P}_\ell$ is the solution to the $(G,\ell)$-partition problem~\eqref{Equation:OptimalPartitionProblemSphere} given in Theorem~\ref{Theorem:MainOptimalPartitionPart1}.
\end{theorem}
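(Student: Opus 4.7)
The plan follows the phase-separation strategy adapted to this critical nonlocal setting. First I would establish uniform a priori bounds. Given any partition $\{V_1,\ldots,V_\ell\}\in\mathcal{P}_\ell$ and positive $G$-invariant least-energy Dirichlet solutions $v_i$ of~\eqref{Equation:DirichletProblemSphere} on $V_i$, the zero-extensions of the $v_i$ have pairwise disjoint supports, so every coupling term in~\eqref{Equation:SystemSphere} vanishes pointwise and $(v_1,\ldots,v_\ell)$ is itself a fully nontrivial $G$-invariant solution at every $\eta_{ij,k}$. Hence the least-energy level $c_k$ of the system satisfies $c_k\leq\sum_i c_{V_i}$ uniformly in $k$. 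Testing equation $i$ by $u_{k,i}$, summing, and using $a_{ij}+b_{ij}=2^*_s$ with $a_{ij}=b_{ji}$, one derives the identity $c_k=\frac{s}{N}\sum_i\|u_{k,i}\|_{H_g^s(\mathbb{S}^N)}^2$, which yields uniform $H_g^s$ bounds on each component and, via Sobolev embedding, uniform $L^{2^*_s}$ bounds.

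Next I would pass to weak subsequential limits $u_{\infty,i}\in H_g^s(\mathbb{S}^N)^G$. The alternative identity
\[
c_k=\frac{s}{N}\sum_{i=1}^\ell\int_{\SN}|u_{k,i}|^{2^*_s}\,dV_g+\frac{2s}{N-2s}\sum_{i<j}\eta_{ij,k}\int_{\SN}|u_{k,j}|^{a_{ij}}|u_{k,i}|^{b_{ij}}\,dV_g,
\]
together with the bound on $c_k$ and the $L^{2^*_s}$ bounds, forces $|\eta_{ij,k}|\int|u_{k,j}|^{a_{ij}}|u_{k,i}|^{b_{ij}}\leq C$; as $\eta_{ij,k}\to-\infty$ the integrals vanish, and Fatou gives $u_{\infty,i}\,u_{\infty,j}=0$ a.e.\ for $i\neq j$. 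I would then invoke the new symmetric fractional Hölder regularity theorem promised in the introduction to upgrade $H_g^s$ boundedness to a uniform $C^{0,\alpha}(\mathbb{S}^N)$ bound on the $u_{k,i}$; Arzelà--Ascoli yields uniform convergence $u_{k,i}\to u_{\infty,i}$, so $u_{\infty,i}\in C^0(\mathbb{S}^N)$, $u_{\infty,i}\geq 0$, and the positivity sets $U_i:=\{z\in\mathbb{S}^N:u_{\infty,i}(z)>0\}$ are $G$-invariant, open, and pairwise disjoint.

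To identify the limit equation, fix a $G$-invariant $\varphi\in C^\infty(\mathbb{S}^N)$ with $\mathrm{supp}\,\varphi\Subset U_i$; uniform convergence forces $u_{k,j}\to 0$ uniformly on $\mathrm{supp}\,\varphi$ for $j\neq i$, so the coupling terms drop out in the limit of the weak formulation, leaving $\mathscr{P}_g^s u_{\infty,i}=|u_{\infty,i}|^{2^*_s-2}u_{\infty,i}$ in $U_i$ with $u_{\infty,i}=0$ on $\mathbb{S}^N\setminus U_i$. Testing each equation by $u_{k,i}$ and using $\eta_{ij,k}<0$ gives $\|u_{k,i}\|_{H_g^s}^2\leq\int|u_{k,i}|^{2^*_s}$; combined with fractional Sobolev this produces a uniform lower bound $\int|u_{k,i}|^{2^*_s}\geq S^{N/(2s)}>0$, so $u_{\infty,i}\not\equiv 0$ and $\{U_1,\ldots,U_\ell\}\in\mathcal{P}_\ell$. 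The chain
\[
\sum_{i=1}^{\ell} c_{U_i}\leq\sum_{i=1}^{\ell}\frac{s}{N}\int_{U_i}|u_{\infty,i}|^{2^*_s}\,dV_g\leq\liminf_{k\to\infty}\frac{s}{N}\sum_{i}\|u_{k,i}\|_{H_g^s}^{2}=\liminf_{k\to\infty} c_k\leq\inf_{\{V_i\}\in\mathcal{P}_\ell}\sum_{i=1}^{\ell}c_{V_i}\leq\sum_{i=1}^{\ell}c_{U_i}
\]
then forces equality throughout, so $\{U_1,\ldots,U_\ell\}$ achieves~\eqref{Equation:OptimalPartitionProblemSphere}, each $u_{\infty,i}|_{U_i}$ is a least-energy solution of~\eqref{Equation:DirichletProblemSphere} on $U_i$, and $\|u_{k,i}\|_{H_g^s}^2\to\|u_{\infty,i}\|_{H_g^s}^2$ promotes the weak convergence to strong convergence in $H_g^s(\mathbb{S}^N)$.

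The main obstacle is the uniform Hölder regularity step: the critical nonlocal operator $\mathscr{P}_g^s$ does not admit the classical Alt--Caffarelli--Friedman monotonicity machinery used in the local analogues, so one must exploit both the $G$-symmetry, which reduces the effective problem to the one-dimensional orbit space, and the competitive structure of~\eqref{Equation:SystemSphere}. It is precisely the novel symmetric Hölder estimate announced in the abstract that upgrades the measure-theoretic disjointness $u_{\infty,i}u_{\infty,j}=0$ a.e.\ to the topological disjointness $U_i\cap U_j=\emptyset$ needed to produce an admissible partition; without such an estimate the scheme breaks down at the identification of the limit supports.
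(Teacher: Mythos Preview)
Your overall phase-separation strategy matches the paper's, but there is a genuine gap in the regularity step. The symmetric H\"older result actually established in the paper (Proposition~\ref{Proposition:Regularity}) gives only a \emph{local} estimate on $\mathbb{S}^N\smallsetminus\mathcal{Z}$, where $\mathcal{Z}=(\mathbb{S}^{m-1}\times\{0\})\cup(\{0\}\times\mathbb{S}^{n-1})$ are the singular orbits; indeed, the paper notes that $H_g^s(\mathbb{S}^N)^G$ contains functions that blow up at $\mathcal{Z}$. So you cannot upgrade the uniform $H_g^s$ bound to a uniform $C^{0,\alpha}(\mathbb{S}^N)$ bound on the sequence, and the Arzel\`a--Ascoli argument does not yield global uniform convergence. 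The paper compensates in two ways. First, it invokes the \emph{compact} embedding $H_g^s(\mathbb{S}^N)^G\hookrightarrow L^{2^*_s}_g(\mathbb{S}^N)$ (Lemma~\ref{Lemma:SymmetricSobolevEmbeddings}) to obtain strong $L^{2^*_s}$ convergence; this, not uniform convergence, is what drives the energy-comparison chain and the conclusions $u_{\infty,i}\neq 0$ and $\|u_{\infty,i}\|^2\leq|u_{\infty,i}|_{2^*_s}^{2^*_s}$. Second, continuity of $u_{\infty,i}$ is first obtained only on $\mathbb{S}^N\smallsetminus\mathcal{Z}$, giving open positivity sets $\Theta_i$ there; the full continuity on $\mathbb{S}^N$ and the final form of the $U_i$ are deduced \emph{a posteriori} from the structural description of optimal $G$-partitions (Theorem~\ref{thm:partition}), which shows that the optimal domains extend smoothly across $\mathcal{Z}$ and that $u_{\infty,i}$ is a least-energy Dirichlet solution on a smooth domain.

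Two smaller points. Your claim that a tuple $(v_1,\ldots,v_\ell)$ of disjointly supported Dirichlet minimizers ``is itself a fully nontrivial $G$-invariant solution'' of the system is not correct (it is not a critical point of $\mathcal{J}$ on the sphere), but what you actually need---and what the paper uses---is that it lies in $\mathcal{N}_{\mathbb{S}^N}$, which is immediate and still gives $c_k\leq\sum_i c_{V_i}$. And your derivation of the limit equation by testing with $\varphi$ supported in $U_i$ must confront a $0\cdot\infty$ issue: the coupling term is $\eta_{ij,k}\,|u_{k,j}|^{a_{ij}}|u_{k,i}|^{b_{ij}-2}u_{k,i}\varphi$ with $\eta_{ij,k}\to-\infty$, so merely $u_{k,j}\to 0$ uniformly on $\operatorname{supp}\varphi$ is not enough without a rate. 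The paper sidesteps this entirely: once the energy chain forces $t_i=1$, strong $H^s_g$ convergence, and $\|u_{\infty,i}\|^2=|u_{\infty,i}|_{2^*_s}^{2^*_s}$, one sees that $u_{\infty,i}\in\mathcal{M}_{U_i}$ with $J_{U_i}(u_{\infty,i})=c_{U_i}$, and a Nehari-manifold minimizer is automatically a critical point.
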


The most delicate point in the proof of Theorem~\ref{Theorem:MainOptimalPartition} is to show that the sets $U_i$ are open. To argue this point, we show that all the elements in the fractional Sobolev space of $G$-invariant functions $H_g^s(\mathbb{S}^N)^G$ have a continuous representative except in an explicit set of measure zero. This implies the continuity of the limit profiles $u_{\infty,i}$ which in turn implies that the $U_i$'s are open. 

\begin{proposition}\label{Proposition:Regularity}
Let $s\in(1/2,1)$ and let $\mathcal{Z} := \left(\mathbb{S}^{m-1}\times\{0\}\right)\cup\left( \{0\}\times\mathbb{S}^{n-1}\right) \subset \mathbb{S}^N.$  For any $u\in H_g^s(\mathbb{S}^N)^G$, there exists $\widetilde{u}\in  C^0(\mathbb{S}^N\smallsetminus \mathcal{Z})$ such that $u=\widetilde{u}$ a.e. in $\mathbb{S}^N$.
\end{proposition}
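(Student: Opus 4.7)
The plan is to exploit the cohomogeneity-one action of $G=O(m)\times O(n)$ on $\SN$ to reduce the problem to a one-dimensional fractional Sobolev estimate, where the hypothesis $s>1/2$ yields H\"older continuity by the classical Morrey--Sobolev embedding. Parametrize the regular part of the action by the diffeomorphism
\[
\Phi:\mathbb{S}^{m-1}\times\mathbb{S}^{n-1}\times(0,\pi/2)\to\SN\setminus\mathcal{Z},\qquad \Phi(\xi,\eta,\theta)=(\cos\theta\,\xi,\ \sin\theta\,\eta),
\]
under which the round metric pulls back to $d\theta^2+\cos^2\theta\,g_{\mathbb{S}^{m-1}}+\sin^2\theta\,g_{\mathbb{S}^{n-1}}$ and the volume form to $w(\theta)\,d\theta\,d\sigma_{m-1}\,d\sigma_{n-1}$, with $w(\theta):=(\cos\theta)^{m-1}(\sin\theta)^{n-1}$. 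Since $G$ acts by isometries of $\SN$, the orbit averaging operator is continuous on $H^s_g(\SN)$, so every $u\in H^s_g(\SN)^G$ satisfies $u\circ\Phi=f(\theta)$ for some measurable $f:(0,\pi/2)\to\R$. A direct computation using $|\xi|=|\xi'|=|\eta|=|\eta'|=1$ gives
\[
|z-\zeta|^2 = 2-2\cos(\theta-\theta') + \cos\theta\cos\theta'|\xi-\xi'|^2 + \sin\theta\sin\theta'|\eta-\eta'|^2
\]
for $z=\Phi(\xi,\eta,\theta)$ and $\zeta=\Phi(\xi',\eta',\theta')$.

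Fix a compact subinterval $K\Subset(0,\pi/2)$. Restricting the Gagliardo double integral defining $\|u\|_{H^s_g}^2$ to $\theta,\theta'\in K$ and using rotational invariance on each sphere factor to freeze the base point $(\xi',\eta')=(\xi_0,\eta_0)$, one arrives at the lower bound
\[
\|u\|_{H^s_g(\SN)}^2 \ \geq\ c\int_K\int_K |f(\theta)-f(\theta')|^2\, w(\theta)w(\theta')\, J(\theta,\theta')\, d\theta\, d\theta',
\]
where
\[
J(\theta,\theta'):=\int_{\mathbb{S}^{m-1}}\int_{\mathbb{S}^{n-1}}\frac{d\sigma(\xi)\,d\sigma(\eta)}{\bigl(2-2\cos\theta\cos\theta'\,\xi\!\cdot\!\xi_0-2\sin\theta\sin\theta'\,\eta\!\cdot\!\eta_0\bigr)^{(N+2s)/2}}.
\]
Passing to polar coordinates $\xi\!\cdot\!\xi_0=\cos\phi$, $\eta\!\cdot\!\eta_0=\cos\psi$ with measures $|\mathbb{S}^{m-2}|\sin^{m-2}\phi\,d\phi$ and $|\mathbb{S}^{n-2}|\sin^{n-2}\psi\,d\psi$, isolating the singular region $\phi,\psi\to 0$, and rescaling $\phi=|\theta-\theta'|p$, $\psi=|\theta-\theta'|q$, the dimension bookkeeping $(m-2)+1+(n-2)+1=N-1$ yields
\[
J(\theta,\theta') \ \asymp\ |\theta-\theta'|^{(N-1)-(N+2s)} \ = \ |\theta-\theta'|^{-1-2s}\qquad \text{uniformly for }\theta,\theta'\in K.
\]

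Combining this with the trivial bound $\|u\|^2_{L^2_g(\SN)}\gtrsim_K \int_K |f|^2\,d\theta$ and the fact that $w$ is bounded above and below on $K$, I obtain $\|f\|_{H^s(K)}\leq C_K\|u\|_{H^s_g(\SN)}$ for every compact $K\Subset(0,\pi/2)$. Since $s>1/2$, the one-dimensional Morrey--Sobolev embedding $H^s(K)\hookrightarrow C^{0,s-1/2}(K)$ produces a continuous representative of $f$ on each such $K$, and an exhaustion of $(0,\pi/2)$ by compact subintervals yields $\widetilde{f}\in C^0((0,\pi/2))$ coinciding a.e.\ with $f$. Then $\widetilde{u}:=\widetilde{f}\circ\mathrm{pr}_\theta\circ\Phi^{-1}$ is the desired continuous representative on $\SN\setminus\mathcal{Z}$.

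The main technical difficulty is the asymptotic analysis of $J(\theta,\theta')$ near the diagonal: the integrand is not locally integrable, so one must carefully balance the vanishing sphere Jacobians $\phi^{m-2}\psi^{n-2}$ against the singular denominator, while checking that contributions from the far region $\phi,\psi\gtrsim 1$ add only a bounded smooth term (they do, since the denominator is then uniformly positive on $K\times K$). The exponent count $(m-2)+(n-2)+2=N-1$ is exactly what makes the averaged one-dimensional kernel behave like that of $(-\partial_\theta^2)^s$, and it explains why the threshold $s>1/2$ is the precise regularity needed to extract pointwise continuity from $H^s_g(\SN)^G$.
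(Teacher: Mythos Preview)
Your argument is correct and takes a genuinely different route from the paper. The paper works spectrally: it identifies an orthonormal basis of $H^s_g(\mathbb S^N)^G$ consisting of Jacobi polynomials composed with the orbit map $f(x,y)=|x|^2-|y|^2$, extracts uniform $L^\infty$ and Lipschitz bounds for the normalized Jacobi polynomials on compact subintervals of $(-1,1)$ from Szeg\H{o}'s asymptotics, and then proves the H\"older estimate by splitting a sum over eigenmodes and applying Cauchy--Schwarz to the Fourier expansion. You instead stay on the kernel side: you restrict the Gagliardo double integral to a compact orbit interval $K$, average out the spherical fibres by rotational invariance to obtain an effective one-dimensional kernel $J(\theta,\theta')$, and show by rescaling that $J\asymp|\theta-\theta'|^{-1-2s}$ uniformly on $K\times K$, thereby bounding the one-dimensional Gagliardo seminorm of the profile $f$ on $K$ by $\|u\|_{H^s_g}$ and invoking the classical embedding $H^s(K)\hookrightarrow C^{0,s-1/2}(K)$.

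What each approach buys: yours is more elementary and self-contained, needs no special-function input, and would transplant with little change to other cohomogeneity-one actions or to nonlocal operators with merely comparable kernels. The paper's spectral route requires more infrastructure (the Jacobi basis, the symbol asymptotics for $\mathscr P^s_g$), but that infrastructure is reused elsewhere in the paper---in particular it yields the explicit isometric isomorphism $H^s_g(\mathbb S^N)^G\cong H^s_h([-1,1])$ and the Fourier-symbol bounds that feed into the variational analysis. Your exponent bookkeeping $(m-2)+1+(n-2)+1=N-1$ (using $m+n=N+1$) is exactly what makes the fibre-averaged kernel one-dimensional with order $2s$, and it gives the same conceptual explanation of the threshold $s>1/2$ as the paper's remark that $G$-invariant functions are ``essentially one-dimensional''.
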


This result is known for $s\in\mathbb{N}$ (see \cite[Proposition 2.4]{CSS21} and \cite[Proposition 3]{ClappFernandezSaldana2021}), where the proof strongly relies on local properties of the $H^1$-norm.  Hence, the nonlocal case requires new ideas. The proof of Proposition~\ref{Proposition:Regularity} is one of the main methodological contributions of this paper. In a nutshell, the proof relies on the following intuitive idea: the elements in $H_g^s(\mathbb{S}^N)^G$ are essentially one-dimensional, and one-dimensional functions in $H^s$ are continuous, whenever $s>\frac{1}{2}$ (we remark, however, that the space $H_g^s(\mathbb{S}^N)^G$ does contain functions which are singular at $\mathcal{Z}$, by arguing as in \cite[Remark 2.5]{CSS21}). To make this intuition rigorous, several ingredients are needed. 

\paragraph{Seizing the one-dimensional structure}
In general, exploiting the one-dimensional structure of functions in nonlocal settings is harder than it is in the local case.  Because of this, we believe it is worth explaining in detail our approach to handle $G$-invariant functions, based on symmetric spherical harmonics and properties of special polynomials. 

First, we express the elements in $H_g^1(\mathbb{S}^N)^G$ using $G$-invariant spherical harmonics, which in turn can be represented with a well-known family of orthogonal polynomials: the Jacobi polynomials.

For any $z\in \mathbb{S}^N$, we define the $G$-orbit of $z$ as the set $Gz:=\{\gamma z\;:\; \gamma\in G\}$.  Define the function 
\begin{align*}
f:\mathbb{S}^N\subset\mathbb{R}^{N+1}\equiv\mathbb{R}^m\times\mathbb{R}^{n}\rightarrow [-1,1]\qquad \text{given by}\qquad 
f(x,y) = \vert x\vert^2 - \vert y\vert^2.    
\end{align*}

The level sets of this function coincide with the $G$-orbits of points in the sphere. In particular, for every $G$-invariant function $u:\mathbb{S}^N \to \r$ there is a function $w:[-1,1]\to \r$ so that 
\begin{align}\label{iso:intro}
    u = w\circ f\qquad \text{ a.e. in }\mathbb{S}^N,
\end{align}
see Section~\ref{Section:SymmetricSobolevSPaces}. The relationship between $G$-invariant spherical harmonics and Jacobi polynomials is given in the following result. Let $\alpha:=\frac{m}{2}-1$, $\beta:=\frac{n}{2}-1,$ $P_{i}^{(\alpha,\beta)}$ be the $i$-th Jacobi polynomial (see Section \ref{sec:jacobi} for its definition), and let 
\begin{align}\label{phinp}
\Phi_{i} := P_{i}^{(\alpha,\beta)}\circ f.
\end{align}

\begin{proposition}\label{Proposition:RestrictedEigenfunctions:intro}
The sequence $(\Phi_i)$ is  an orthogonal basis of $L^2_g(\mathbb{S}^N)^G$ and of $H_g^1(\mathbb{S}^N)^G$ consisting of $G$-invariant eigenfunctions of the eigenvalue problem
\begin{equation}\label{Problem:CompleteSymmetricEigenvalueSphere}
-\Delta_g \Phi_i = \lambda_i \Phi_i \quad \text{on }\mathbb{S}^N
\end{equation}
associated to the simple eigenvalues $\lambda_{i}  = 2 i(N -1 +2 i)$ for $i\in\mathbb{N}.$ Moreover, $(\Phi_i)$ is also an orthogonal basis of $H_g^s(\mathbb{S}^N)^G$ for $s\in(0,1)$ and $\Phi_i$ is an eigenfunction for the conformal fractional Laplacian $\mathscr{P}_g^s$, namely,
\begin{equation*}
\mathscr{P}_g^s \Phi_i = \varphi_{N,s}(\lambda_i) \Phi_i \quad \text{on }\mathbb{S}^N,
\end{equation*}
where $\varphi_{N,s}$ satisfies that $C^{-1}(1+\lambda_i^{s}) \leq \varphi_{N,s}(\lambda_i)\leq C(1+\lambda_i^{s})$  for all $i\in \mathbb N_0$ and for some $C>1$.
\end{proposition}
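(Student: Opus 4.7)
The plan is to reduce everything to a one-dimensional weighted-$L^2$ analysis on the orbit space $[-1,1]$, where classical Jacobi-polynomial theory applies, and then transfer the conclusions back to $\mathbb{S}^N$ using the $O(N+1)$-equivariance of $-\Delta_g$ and $\mathscr{P}^s_g$.

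For the $L^2$ claim, I would parametrize $\mathbb{S}^N\subset\mathbb{R}^m\times\mathbb{R}^n$ by $(x,y)=(\sin\phi\,\xi,\cos\phi\,\eta)$ with $\phi\in[0,\pi/2]$, $\xi\in\mathbb{S}^{m-1}$, $\eta\in\mathbb{S}^{n-1}$. In these coordinates $f=-\cos(2\phi)$ and $dV_g=\sin^{m-1}\phi\cos^{n-1}\phi\,d\phi\,dV_{\mathbb{S}^{m-1}}\,dV_{\mathbb{S}^{n-1}}$, so the substitution $t=-\cos(2\phi)$ shows that the pushforward of $dV_g$ under $f$ is a positive constant multiple of the Jacobi weight $(1+t)^{\alpha}(1-t)^{\beta}\,dt$ on $[-1,1]$. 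Since every $G$-invariant $u$ factors as $u=w\circ f$ (cf.~\eqref{iso:intro}), this identifies $L^2_g(\mathbb{S}^N)^G$ isometrically (up to a constant) with the corresponding weighted $L^2([-1,1])$, and the orthogonality and completeness of $\{P_i^{(\alpha,\beta)}\}$ transfer directly to $(\Phi_i)$.

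For the Laplace eigenvalue equation I would use the chain rule $\Delta_g(w\circ f)=w'(f)\,\Delta_g f+w''(f)\,|\nabla_g f|^2$. Viewing $f$ as the restriction of the homogeneous degree-$2$ function $h(x,y)=|x|^2-|y|^2$, which satisfies $\Delta h=2(m-n)$ and $|\nabla h|^2=4$, the radial/tangential decomposition $\Delta_g h=\Delta h-\partial_r^2 h-N\partial_r h$ and $|\nabla_g f|^2=|\nabla h|^2-(\partial_r h)^2$ at $r=1$ gives $\Delta_g f=2(m-n)-2(N+1)f$ and $|\nabla_g f|^2=4(1-f^2)$. Substituting, the equation $-\Delta_g(w\circ f)=\lambda\,(w\circ f)$ becomes a Jacobi-type ODE in $t=f$, whose polynomial solutions are $P_i^{(\alpha,\beta)}$ with eigenvalue $\lambda=4i(i+\alpha+\beta+1)=2i(N-1+2i)=\lambda_i$. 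Simplicity on $H^1_g(\mathbb{S}^N)^G$ follows from the first fundamental theorem of invariant theory: $G$-invariant polynomials are generated by $|x|^2$ and $|y|^2$, hence on $\mathbb{S}^N$ they are polynomials in $f$; comparing the dimension $i+1$ of $G$-invariant degree-$2i$ polynomials with the sum of $G$-invariant parts of harmonic subspaces of degrees $0,2,\dots,2i$ gives inductively that each such part is one-dimensional.

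For $\mathscr{P}^s_g$, I would exploit that the formula~\eqref{Psdef} is manifestly $O(N+1)$-equivariant, since both $|z-\zeta|$ and $dV_g$ are $O(N+1)$-invariant. Hence $\mathscr{P}^s_g$ preserves each $(-\Delta_g)$-eigenspace and, by Schur's lemma, acts on the $G$-invariant line spanned by $\Phi_i$ (the $G$-part of the degree-$2i$ spherical harmonics) as multiplication by some scalar $\varphi_{N,s}(\lambda_i)$. Combined with the $L^2$ completeness above and the spectral characterization of the $H^s_g$-norm provided by Corollary~\ref{cor:equiv}, this yields the $H^s_g(\mathbb{S}^N)^G$-basis statement. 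The main obstacle, and where I would invest most of the effort, is the two-sided bound $C^{-1}(1+\lambda_i^s)\leq\varphi_{N,s}(\lambda_i)\leq C(1+\lambda_i^s)$. Since the paper deliberately avoids scattering theory, I would compute $\varphi_{N,s}(\lambda_i)$ from first principles by applying~\eqref{Psdef} to $\Phi_i$ and evaluating at a convenient zonal point so that the spherical principal-value integral reduces to a one-dimensional integral against $P_i^{(\alpha,\beta)}$, then extract the $\lambda_i^s\sim i^{2s}$ growth via a rescaling and asymptotic analysis paralleling the flat fractional Laplacian on $\mathbb{R}^N$ through the intertwining relation of Lemma~\ref{lem:1}.
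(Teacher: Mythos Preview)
Your reduction to the weighted $L^2$-space on $[-1,1]$, the chain-rule computation of the eigenvalue $\lambda_i$, and the invariant-theory dimension count for simplicity are all correct and essentially match the paper's Proposition~\ref{Proposition:RestrictedEigenfunctions} (the paper quotes Henry--Petean for the latter, but your argument is self-contained). Your use of $O(N+1)$-equivariance and Schur's lemma to conclude that $\mathscr{P}^s_g$ acts as a scalar on each Laplace eigenspace is a genuinely different and more economical route than the paper's Lemma~\ref{phinslem:0}, which builds the zonal harmonic explicitly via Riesz representation and averages over the isotropy subgroup of the north pole. Both approaches yield the eigenfunction statement; yours buys brevity at the cost of invoking the irreducibility of the spherical-harmonic representations.

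The gap is in your plan for the two-sided bound. Applying \eqref{Psdef} to $\Phi_i$ and evaluating at a point $z_0$ does \emph{not} reduce the principal-value integral to one dimension against $P_i^{(\alpha,\beta)}$: the kernel $|z_0-\zeta|^{-N-2s}$ depends only on $z_0\cdot\zeta$, whereas $\Phi_i(\zeta)$ depends on $f(\zeta)=|\zeta_x|^2-|\zeta_y|^2$, and no choice of $z_0$ makes these the same coordinate on $\mathbb{S}^N$ (the isotropy group of $z_0$ in $O(N+1)$ never contains $G$). The paper's Lemma~\ref{phinslem} exploits exactly the scalar-action conclusion you obtained: since $\varphi_{N,s}(\lambda_i)$ depends only on the eigenspace, one may replace $\Phi_i$ by the \emph{zonal} eigenfunction $\Theta_{\lambda_i}$ (a Gegenbauer polynomial in $\zeta_{N+1}$) and evaluate at $e_{N+1}$, where both kernel and eigenfunction depend only on $\zeta_{N+1}$, yielding a one-dimensional integral. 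The asymptotics $\varphi_{N,s}(\lambda_i)\asymp\lambda_i^s$ then come from the rescaling $\tau=\lambda_i(1-t)$ and a pointwise ODE limit for the rescaled Gegenbauer polynomials, not from the intertwining relation of Lemma~\ref{lem:1}.
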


The characterization of the eigenvalues and eigenfunctions of the Laplacian on the sphere with the symmetries imposed by $G$ was observed by Henry and Petean in \cite{HenryPetean2014} and by Catalan and Petean in \cite{catalan2022local}.  The relations between the spectra for the Laplacian on the sphere and the conformal fractional Laplacian are discussed in the survey \cite{DM18}, with an explicit formula for the function $\varphi_{N,s}$ (from this formula one can obtain the asymptotic behavior via Stirling's formula). However, to keep the presentation more self-contained, we provide a new proof, which we believe is of independent interest. Notably, our argument avoids the use of scattering theory, allowing the underlying ideas to be extended to a broader class of nonlocal operators.

Fix $\alpha:=\frac{m}{2}-1$ and $\beta:=\frac{n}{2}-1$. Let $h:[-1,1]\to \r$ be given by
\begin{align}\label{hdef}
    h(t):=C(\alpha,\beta)(1-t)^{\alpha}(1+t)^{\beta},
    \qquad C(\alpha,\beta) := \frac {|\mathbb{S}^{2\alpha+1}||\mathbb{S}^{2\beta+1}|} {2^{\alpha+\beta+2}} = \frac {|\mathbb{S}^{m-1}||\mathbb{S}^{n-1}|} {2^{\frac{m+n}{2}}},
\end{align}
where $|\mathbb{S}^{k-1}| = 2 \pi^{k/2}/\Gamma(k/2)$ is the surface measure of the sphere $\mathbb{S}^{k-1}.$  This is the weight induced by the  function $f$ via a change of variables (see the proof of Proposition \ref{Proposition:IsometricIsomorphismIntroduction}). Let $L^2_h([-1,1])$ be the $L^2$-weighted space with weight $h$ and norm 
$\Vert f\Vert_{h}^2 := \int_{-1}^1 f^2 h \, dt$ and consider the $L^2_h$-normalized Jacobi polynomials $p_i^{(\alpha,\beta)}:[-1,1]\to \r$ given by
\begin{equation}\label{Equation:NormalizedJacobiPolinomial}
p_i^{(\alpha,\beta)}:=\frac{P_i^{(\alpha,\beta)}}{\Vert P_i^{(\alpha,\beta)} \Vert_{h}}
\end{equation}
In the following, we remove the $\alpha$ and $\beta$ parameters to ease the notation of $p_i^{(\alpha,\beta)}.$  Next, we establish an isometric isomorphism between the fractional Sobolev space with symmetries $H_g^s(\mathbb{S}^N)^G$ and a suitable one-dimensional weighted space.  To be more precise, for each $s\in(0,1)$, we define the Sobolev space  $H_{h}^s([-1,1])$ as the closure of $C^\infty([-1,1])$ under the norm
\begin{align}\label{varphis}
\Vert w\Vert_{H_{h}^s} := \sqrt{\sum_{i=1}^\infty   \langle w,p_i\rangle_h^2\varphi_{N,s}(\lambda_{i})},\qquad 
 \langle w,p_i\rangle_h:=\int_{-1}^1 w\, p_i\,h\, dt,\qquad \lambda_{i}  = 2 i(N -1 +2 i).
\end{align}
The function $\varphi_{N,s}$ (given in Proposition \ref{Proposition:RestrictedEigenfunctions:intro}) is the Fourier symbol of the conformal fractional Laplacian $\mathscr{P}^s_{g}$ in the sphere, see Section \ref{sec:phi}.  Let $L^2_g(\mathbb{S}^N)^G$ denote the space of $L^2$-functions on $\mathbb{S}^N$ that are $G$-invariant and recall that $H_g^s(\mathbb{S}^N)$ is endowed with the (equivalent) norm \eqref{Hsnorm}.

\begin{proposition}\label{Proposition:IsometricIsomorphismIntroduction}
For any $s\in(0,1)$ and any $G$-invariant function $u$, let $j$ be given by $j(u)=j(w\circ f )=w$, with $w$ as in~\eqref{iso:intro}. Then $j:H_g^s(\mathbb{S}^N)^G\to H_{h}^s([-1,1])$ and $j:L^2_g(\mathbb{S}^N)^G\to L^2_{h}([-1,1])$ are isometric isomorphisms.
\end{proposition}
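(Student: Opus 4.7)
The proof splits into an $L^2$ isometry followed by a spectral reduction of the $H^s$ case, which is essentially formal once Proposition \ref{Proposition:RestrictedEigenfunctions:intro} is in hand.

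For the $L^2$ step, I would write a point of $\mathbb{S}^N\subset\mathbb{R}^m\times\mathbb{R}^n$ as $(x,y)$ with $|x|^2+|y|^2=1$ and observe that $f(x,y)=t\in(-1,1)$ is equivalent to $(x,y)\in\mathbb{S}^{m-1}(\sqrt{(1+t)/2})\times\mathbb{S}^{n-1}(\sqrt{(1-t)/2})$, which is a single $G$-orbit. Thus every $G$-invariant measurable function factors a.e.\ as $u=w\circ f$ for a uniquely determined measurable $w:[-1,1]\to\mathbb{R}$, so $j$ is well defined on $L^2_g(\mathbb{S}^N)^G$. Applying the coarea formula
\[
\int_{\mathbb{S}^N}(w\circ f)^2\,dV_g=\int_{-1}^{1}w(t)^2\left(\int_{f^{-1}(t)}\frac{dS}{|\nabla_g f|}\right)dt,
\]
and computing $|\nabla_g f|^2=4(1-t^2)$ by projecting the Euclidean gradient $(2x,-2y)$ onto the tangent space of $\mathbb{S}^N$ (using $f=|x|^2-|y|^2$ and $|x|^2+|y|^2=1$), the inner integral matches exactly $h(t)=C(\alpha,\beta)(1-t)^{\alpha}(1+t)^{\beta}$ from \eqref{hdef} after inserting the surface measures of the two spherical factors. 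Hence $\|u\|_{L^2_g}=\|w\|_{h}$, and surjectivity is immediate from the factorization.

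For the fractional case, I would first symmetrize the double integral in \eqref{Hsnorm} against the kernel in \eqref{Psdef} to obtain $\|u\|_{H_g^s}^2=\langle\mathscr{P}_g^s u,u\rangle_{L^2_g}$. By Proposition \ref{Proposition:RestrictedEigenfunctions:intro}, $(\Phi_i)$ is an $L^2_g$-orthogonal basis of $L^2_g(\mathbb{S}^N)^G$ consisting of eigenfunctions of $\mathscr{P}_g^s$ with eigenvalues $\varphi_{N,s}(\lambda_i)>0$. Setting $\widetilde\Phi_i:=\Phi_i/\|\Phi_i\|_{L^2_g}$, the spectral theorem gives, for $u=\sum_{i\ge 0} c_i\widetilde\Phi_i\in H_g^s(\mathbb{S}^N)^G$,
\[
\|u\|_{H_g^s}^2=\sum_{i\ge 0}\varphi_{N,s}(\lambda_i)\,c_i^2.
\]
The $L^2$ isometry already established yields $j(\widetilde\Phi_i)=P_i^{(\alpha,\beta)}/\|P_i^{(\alpha,\beta)}\|_h=p_i$, and consequently $c_i=\langle u,\widetilde\Phi_i\rangle_{L^2_g}=\langle j(u),p_i\rangle_h$. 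Comparing with the defining norm \eqref{varphis} delivers $\|u\|_{H_g^s}=\|j(u)\|_{H_h^s}$, so $j$ is an isometric embedding. Surjectivity onto $H_h^s([-1,1])$ follows because that space is by construction the completion of $\mathrm{span}\{p_i\}$ under $\|\cdot\|_{H_h^s}$, and every partial sum $\sum_{i\le k}\alpha_i p_i$ is the image of $\sum_{i\le k}\alpha_i \widetilde\Phi_i$, which lies in $H_g^s(\mathbb{S}^N)^G$ with the same norm.

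The main obstacle I anticipate is confined to the $L^2$ step, namely the rigorous justification that an arbitrary (merely measurable) $G$-invariant $L^2$ function factors through $f$ up to a null set — which requires a disintegration-type argument relying on the fact that the orbit map $\mathbb{S}^N\to\mathbb{S}^N/G$ is realized by $f:\mathbb{S}^N\to[-1,1]$ — together with careful bookkeeping of all constants in the coarea computation to land exactly on the normalization $C(\alpha,\beta)$. Everything after that is a formal consequence of Proposition \ref{Proposition:RestrictedEigenfunctions:intro} via the spectral theorem.
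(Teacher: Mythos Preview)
Your proposal is correct and follows essentially the same strategy as the paper: establish the $L^2$ isometry by a change of variables, then use the spectral expansion in the $G$-invariant eigenbasis $(\Phi_i)$ from Proposition~\ref{Proposition:RestrictedEigenfunctions:intro} to transfer the $H^s$ norm. The only differences are cosmetic: the paper computes the $L^2$ identity via an explicit parametrization $(\theta,\theta_x,\theta_y)\mapsto(\cos(\theta/2)\theta_x,\sin(\theta/2)\theta_y)$ of $\mathbb{S}^N$ rather than the coarea formula, and it sidesteps the measurability/disintegration issue you flag by proving both isometries first for $u\in C^\infty(\mathbb{S}^N)^G$ and then invoking density---which is simpler than the argument you anticipate needing.
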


For $s\in(1/2,1)$, using this isometry and properties of Jacobi polynomials, we show in Section~\ref{Section:RegularityAndJacobiPolynomials} that the elements of $H_g^s(\mathbb{S}^N)^G$ satisfy a local Hölder regularity estimate (away from the singular orbit $\mathcal{Z}= \{z\in \mathbb S^N \ | \ f(z)\in\{-1,1\}\}$, but arbitrarily close), which yields Proposition~\ref{Proposition:Regularity}. 
We remark that the fractional case is particularly difficult because other approaches in the literature cannot be used in this setting.  For instance, in \cite{FernandezPetean2020}, the proofs are based on ODE techniques, which can be used once the symmetric Yamabe problem becomes an ODE. On the other hand, in \cite{CSS21}, a local argument using the $H^1$-norm is key to showing the regularity of the limit profiles. None of these tools are available for nonlocal problems. 

\medskip

To close this introduction, we comment on some open questions. As mentioned earlier, the regularity result, Proposition~\ref{Proposition:Regularity}, is crucial in our approach and it is the only place where the assumption $s>\frac{1}{2}$ is needed.  We believe that the result is also true for $s\in(0,\frac{1}{2}]$, but the regularity of the limit profiles needs to be argued differently. Perhaps the approach in \cite{TZ20} based on monotonicity formulas can be extended to our setting.  On the other hand, the assumption $s<1$ has been made for simplicity, but we believe that most of the arguments can be extended to consider the higher-order case $s\in(1,\frac{N}{2})$ (see \cite{ClappFernandezSaldana2021} for the case $s\in \mathbb N$). Furthermore, a main focus in \cite{FernandezPetean2020,CSS21} is to construct $G$-invariant solutions of the Yamabe problem that change sign exactly $\ell$-times, for any given number $\ell\in\mathbb{N}$. This is done by a ``double shooting'' method in \cite{FernandezPetean2020}, and by \textquotedblleft gluing" together the limit profiles $u_{\infty,i}$ by alternating their signs in \cite{CSS21}.  Neither of these constructions can be done in the nonlocal case (see Remark~\ref{schs}) and the existence of these solutions remains an open problem. Finally, we mention that, in recent years, continuity and differentiability in the $s$ parameter have been studied in the context of linear and nonlinear fractional problems (see \cite{HS21,AS23,JSW20,jsw24,HS22}), it would also be interesting to better understand the properties of the $s$-dependence of the optimal partitions.

\medskip

The paper is organized as follows. In Section~\ref{Section:Preliminaries}, we introduce the fractional Laplacian, the conformal fractional Laplacian on the sphere, and show some of their properties. Section~\ref{Section:SymmetricSobolevSPaces} presents the function spaces with the symmetries imposed by $G = O(m)\times O(n)$ and their connection to Jacobi polynomials. Section~\ref{regularity} is devoted to proving the crucial regularity result, Proposition~\ref{Proposition:Regularity}. In Section~\ref{Section:Variational}, we state the main variational problems studied in this paper and establish their equivalences via the stereographic projection. Section~\ref{Section:SystemsAndSegregation} is dedicated to the existence of solutions to these systems. Finally, Section~\ref{sec:euclidean} contains the proofs of the existence and qualitative properties of the optimal partition problems. 

\section{Fractional Laplacians}\label{Section:Preliminaries}

For $s\in(0,1)$, the fractional Laplacian on $\mathbb{R}^N$ is defined by
\begin{align}\label{deltasdef}
(-\Delta)^s u(x) = c_{N,s} \, p.v.\int_{\mathbb{R}^N}\frac{u(x)-u(y)}{\vert x - y\vert^{N+2s}}dy, \qquad 
c_{N,s}:=4^s \pi^{-\frac{N}{2}}\frac{\Gamma(\frac{N}{2}+s)}{\Gamma(2-s)}s(1-s).    
\end{align}
This choice of $c_{N,s}$ ensures that $(-\Delta)^s$ has the Fourier symbol $|\xi|^{2s}$. The fractional Laplacian is a positive definite self-adjoint operator that can be used to define a Hilbert space in the following way.

\begin{definition}
    Given $\Omega \subset \r^N$ open, let $D^s(\Omega)$ be the completion of $ C^\infty_c(\Omega)$ in $L^2(\mathbb{R}^N)$ with respect to the norm $\Vert u\Vert := \langle u,u \rangle^{1/2}$ defined from the inner product
    \begin{align}\label{sp:def}
    \langle u,v \rangle:=  \frac{c_{N,s}}{2}\int_{\rn}\int_{\rn}\frac{(u(x)-u(y))(v(x)-v(y))}{\vert x - y\vert^{N+2s}}\, dx\, dy, \quad u,v\in C_c^\infty(\Omega)  
    \end{align}
\end{definition}

Note that $\langle u,v \rangle= \int_{\mathbb{R}^N} v(-\Delta)^s u =\int_{\mathbb{R}^N} u(-\Delta)^s v $ for any $u,v\in C_c^\infty(\Omega)$. The next lemma characterizes the space $D^{s}(\Omega)$.

\begin{lemma}\label{A}
    Let $\Omega\subset \rn$ be an open set of class $C^0$ and let $s\in(0,1)$. Then
    \begin{align}\label{Aeq}
        D^{s}(\Omega)=\{v\in D^{s}(\R^N)\::\:v=0\text{ in }\R^N\smallsetminus\Omega\}.
    \end{align}
\end{lemma}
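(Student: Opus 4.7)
The plan is to prove the two inclusions in \eqref{Aeq} separately. The inclusion $D^s(\Omega)\subseteq \{v\in D^s(\R^N):v=0\text{ in }\R^N\smallsetminus\Omega\}$ is immediate from the definition: any $u\in D^s(\Omega)$ is the Gagliardo-norm limit of a sequence $u_n\in C_c^\infty(\Omega)\subset C_c^\infty(\R^N)$, so $u\in D^s(\R^N)$; and since the completion is realized inside $L^2(\R^N)$, the convergence $u_n\to u$ also holds in $L^2$, forcing $u=0$ a.e.\ on $\R^N\smallsetminus\Omega$.

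The reverse inclusion is the content of the lemma. Starting from $v\in D^s(\R^N)$ with $v=0$ on $\R^N\smallsetminus\Omega$, I would build an approximating sequence in $C_c^\infty(\Omega)$ in three stages:

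\begin{enumerate}[(i)]
\item \emph{Truncation.} Pick cut-offs $\eta_R\in C_c^\infty(\R^N)$ equal to $1$ on $B_R$ and supported in $B_{R+1}$, with $|\nabla\eta_R|\leq C$. A standard fractional Leibniz-type estimate on the Gagliardo double integral shows $v\eta_R\to v$ in $D^s(\R^N)$, while each $v\eta_R$ is compactly supported and still vanishes on $\R^N\smallsetminus\Omega$.
\item \emph{Localization and shift.} Use the $C^0$ hypothesis to cover $\mathrm{supp}(v\eta_R)$ by finitely many open sets: an interior set $\mathcal{O}_0\Subset\Omega$, together with boundary charts $\mathcal{O}_1,\dots,\mathcal{O}_K$ in which, after rotation, $\Omega\cap\mathcal{O}_k=\{(x',x_N):x_N<\phi_k(x')\}$ for some continuous $\phi_k$. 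With a subordinate smooth partition of unity $\{\psi_k\}$, write $v\eta_R=\sum_{k=0}^K w_k$ with $w_k:=\psi_k v\eta_R$. For each boundary piece $k\geq 1$, translate inward: $w_k^{h}(x):=w_k(x+h\,e_N^{(k)})$ for $h>0$, where $e_N^{(k)}$ is the outward normal direction of the chart. The uniform continuity of $\phi_k$ on the compact projection of $\mathrm{supp}\,w_k$ guarantees $\mathrm{dist}(\mathrm{supp}\,w_k^{h},\partial\Omega)>0$, and the $D^s$-continuity of translations (via dominated convergence on the Gagliardo kernel) gives $w_k^{h}\to w_k$ in $D^s(\R^N)$ as $h\to 0^+$.
\item \emph{Mollification.} Convolve $w_0$ and each $w_k^{h_k}$ ($k\geq 1$) with a standard mollifier $\rho_{\varepsilon_k}$ of radius smaller than the distance from its support to $\partial\Omega$. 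The result lies in $C_c^\infty(\Omega)$ and converges to the corresponding pre-mollified piece in $D^s(\R^N)$. Summing over $k$ and then diagonalizing in $R$, $h_k$, $\varepsilon_k$ produces the desired $C_c^\infty(\Omega)$ approximation of $v$.
\end{enumerate}

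The main obstacle is step (ii): for a merely $C^0$ boundary there is no global smooth transverse vector field along $\partial\Omega$, so the inward shift must be engineered chart by chart, and the positivity of $\mathrm{dist}(\mathrm{supp}\,w_k^h,\partial\Omega)$ rests on the modulus of continuity of $\phi_k$ rather than on a Lipschitz constant; this is precisely where the $C^0$ hypothesis is essential. The other ingredients (the fractional Leibniz estimate, translation continuity in $D^s$, and control of mollification) are comparatively standard, so the delicate part is coordinating the chart cover and the three parameters $R$, $h_k$, $\varepsilon_k$ so that the cumulative $D^s$-error can be made arbitrarily small.
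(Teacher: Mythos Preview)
Your argument is correct and essentially reproves, for the Gagliardo seminorm, the density statement that the paper outsources. The paper's own proof is much shorter: it invokes \cite[Theorem~1.4.2.2]{grisvard2011elliptic} to get $H^s_0(\Omega)=\{u\in H^s(\R^N):u=0\text{ on }\R^N\smallsetminus\Omega\}$ for $C^0$ domains, observes that for bounded $\Omega$ the Poincar\'e inequality makes the $D^s$ and $H^s$ norms equivalent so that $D^s(\Omega)=H^s_0(\Omega)$, and then handles the unbounded case by truncating with cut-offs $\eta_n$, applying the bounded case to $\Omega\cap B_{2n}$, and using that $D^s(\Omega)$ is weakly closed in $D^s(\R^N)$.

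Your route has the advantage of being self-contained and of making explicit where the $C^0$ hypothesis enters (the chart-by-chart inward shift, which is exactly what Grisvard's proof does behind the citation). The paper's route is quicker but trades transparency for a black-box reference; it also glosses over the minor technicality that $\Omega\cap B_{2n}$ need not be $C^0$ where $\partial\Omega$ meets $\partial B_{2n}$, a point your direct argument avoids entirely.
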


\begin{proof}Since $D^{s}(\Omega)\subset \{v\in D^{s}(\R^N)\::\:v=0\text{ in }\R^N\smallsetminus\Omega\}$, we just need to establish the opposite inclusion.   By \cite[Theorem 1.4.2.2]{grisvard2011elliptic}, $H^s_0(\Omega) = \{u\in H^s(\R^N)\::\: u=0 \text{ in } \R^N\smallsetminus\Omega\},$ where $H^s(\R^N)$ denotes the usual fractional Sobolev space and $H^s_0(\Omega)$ is defined as the closure of $C_c^\infty(\Omega)$ with respect to the standard $H^s$-norm.
    
    If $\Omega$ is bounded, the norm induced by the inner product~\eqref{sp:def} is equivalent to the $H^s_0(\Omega)$-norm (by the Poincaré inequality); therefore,
    \[
    D^s(\Omega) = H^s_0(\Omega) = \{u\in H^s(\R^N)\::\: u=0 \text{ in } \R^N\smallsetminus\Omega\}= \{u\in D^s(\R^N)\::\: u=0 \text{ in } \R^N\smallsetminus\Omega\}.
    \]
    
    Denote by $B_r(0)$ the ball in $\mathbb{R}^N$ with radius $r>0$ centered at the origin. If $\Omega$ is unbounded, let $\eta\in  C_c^\infty(B_2(0))$ be such that $\eta=1$ in $B_1(0)$, and set $\eta_n(x):=\eta(x/n)$. Given $u\in D^{s}(\R^N)$ with $u=0$ on $\rn\setminus \Omega$, the product $\eta_n u$ vanishes outside $\Omega_n:=\Omega\cap B_{2n}(0)$, which is a bounded domain of class $C^0$. By the previous case, we obtain that $\eta_n u\in D^{s}(\Omega_n)\subset D^{s}(\Omega)$. Clearly $\eta_n u\rightharpoonup u$ weakly in $D^{s}(\Omega)$ and, since $D^{s}(\Omega)$ is weakly closed, we conclude that $u\in D^{s}(\Omega)$. 
\end{proof}

Similar to the construction of the space $D^s(\Omega)$ from the fractional Laplacian in the Euclidean setting, we use $\mathscr{P}^s_{g}$ to define the following Hilbert space for functions defined on the sphere.

\begin{definition}
    Let $H_g^s(\mathbb{S}^N)$ be the completion of $C^\infty_c(\mathbb S^N)$ in $L^2_g(\mathbb{S}^N)$ with respect to the norm
    \begin{align}\label{Hsndef}
    \|u\|_{H_g^s(\mathbb{S}^N)} = \langle u,u \rangle_{H_g^s(\mathbb{S}^N)}^{1/2}    
    \end{align}
     defined from the inner product
    \begin{align*}
    \langle u,v \rangle_{H_g^s(\mathbb{S}^N)}:= \frac{c_{N,s}}{2} \int_{\SN}\int_{\SN} \frac{(u(z)-u(\zeta))(v(z)-v(\zeta))}{|z-\zeta|^{N+2s}}\, dV_g(z)\, dV_g(\zeta) + A_{N,s}\int_{\SN}uv\, dV_g.
    \end{align*}
\end{definition}
Note that $\langle u,v \rangle_{H_g^s(\mathbb{S}^N)}= \int_{\SN} u\mathscr{P}^s_{g}v = \int_{\SN} v\mathscr{P}^s_{g}u$ for $u,v\in C^\infty_c(\mathbb S^N).$

Now we show some isometries induced by the stereographic projection between the Lebesgue and Sobolev spaces on $\mathbb{R}^N$ and on $\mathbb S^N$.

\begin{proposition}\label{Proposition:IsometricIsomorphism}
    Given $s\in(0,1)$, the map 
    \begin{equation}\label{Equation:IsometricIsomorphism}
    \iota_s(u):=\psi_s u\circ\sigma^{-1},
    \end{equation}
    is an isometric isomorphism from $L_g^{2_{s}^\ast}(\mathbb{S}^N)$ into $L^{2_{s}^\ast}(\mathbb{R}^N)$ and from $H_g^s(\mathbb{S}^N)$ into $D^s(\mathbb{R}^N)$.
\end{proposition}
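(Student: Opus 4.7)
The map $\iota_s$ encodes the conformal change of metric induced by stereographic projection, so the two statements should follow from a direct change of variables combined with the intertwining rule in Lemma~\ref{lem:1}. The plan is as follows. First, a computation of the pullback of the round metric under $\sigma^{-1}$ yields the volume element
\[
dV_g = \left(\frac{2}{1+|x|^2}\right)^N dx = \psi_s(x)^{2_s^*}\,dx,
\]
where I used that $2_s^*(N-2s)/2 = N$. Substituting in the $L^{2_s^*}$ integral and setting $v = u\circ \sigma^{-1}$ immediately gives $\|u\|_{L^{2_s^*}_g(\mathbb{S}^N)}^{2_s^*} = \|\psi_s v\|_{L^{2_s^*}(\mathbb{R}^N)}^{2_s^*} = \|\iota_s u\|_{L^{2_s^*}(\mathbb{R}^N)}^{2_s^*}$. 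The inverse $w\mapsto (w/\psi_s)\circ\sigma$ is well-defined a.e. (the south pole is a null set), so the $L^{2_s^*}$ isometric isomorphism follows.

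For the Sobolev case, I would use the polarization identities observed right after the definitions of $D^s(\mathbb{R}^N)$ and $H_g^s(\mathbb{S}^N)$, namely
\[
\|u\|_{H_g^s(\mathbb{S}^N)}^2 = \int_{\mathbb{S}^N} u\,\mathscr{P}_g^s u\,dV_g, \qquad \|w\|_{D^s(\mathbb{R}^N)}^2 = \int_{\mathbb{R}^N} w\,(-\Delta)^s w\,dx,
\]
valid on $C^\infty(\mathbb{S}^N)$ and $C_c^\infty(\mathbb{R}^N)$, respectively. Combining the volume element formula with the intertwining rule from Lemma~\ref{lem:1}, I compute
\[
\int_{\mathbb{S}^N} u\,\mathscr{P}_g^s u\,dV_g = \int_{\mathbb{R}^N} v(x)\,\psi_s(x)^{-\frac{N+2s}{N-2s}}\,(-\Delta)^s(\psi_s v)(x)\,\psi_s(x)^{2_s^*}\,dx.
\]
The decisive numerical check is $2_s^* - \frac{N+2s}{N-2s} = \frac{2N-(N+2s)}{N-2s} = 1$, so the two conformal factors collapse to a single $\psi_s$ and the integrand becomes $(\psi_s v)(-\Delta)^s(\psi_s v)$. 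This gives $\|u\|_{H_g^s(\mathbb{S}^N)}^2 = \|\iota_s u\|_{D^s(\mathbb{R}^N)}^2$ for $u\in C^\infty(\mathbb{S}^N)$.

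To finish, I would argue surjectivity by density. For any $w\in C_c^\infty(\mathbb{R}^N)$, the function $u := (w/\psi_s)\circ \sigma$ is smooth on $\mathbb{S}^N\smallsetminus\{-e_{N+1}\}$ and vanishes in a neighborhood of the south pole (because $w$ has compact support in $\mathbb{R}^N$ and $\sigma^{-1}$ sends compacts of $\mathbb{R}^N$ to compacts of $\mathbb{S}^N\smallsetminus\{-e_{N+1}\}$), so $u\in C^\infty(\mathbb{S}^N)$ and $\iota_s(u)=w$. Since $C^\infty(\mathbb{S}^N)$ is dense in $H_g^s(\mathbb{S}^N)$ and $C_c^\infty(\mathbb{R}^N)$ is dense in $D^s(\mathbb{R}^N)$, the isometry $\iota_s$ extends to an isometric isomorphism of the completions.

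The only nontrivial step is the bookkeeping of conformal weights: matching the factor $\psi_s^{-(N+2s)/(N-2s)}$ coming from Lemma~\ref{lem:1} with $\psi_s^{2_s^*}$ coming from the volume element. The exponent identity $2_s^* = 1 + (N+2s)/(N-2s)$ is where the conformal invariance is being used, and once one notices it the rest reduces to a standard change of variables and a density argument.
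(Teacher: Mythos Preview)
Your proof is correct and follows essentially the same approach as the paper: compute the volume element as $\psi_s^{2_s^*}\,dx$, combine this with Lemma~\ref{lem:1} to match the conformal weights via the identity $2_s^* - \frac{N+2s}{N-2s} = 1$, and conclude by density. Your treatment of surjectivity (observing that $C_c^\infty(\mathbb{R}^N)$ is hit by smooth functions vanishing near the south pole) is slightly more explicit than the paper's one-line appeal to density, but the argument is otherwise identical.
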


\begin{proof}
We argue as in \cite[Lemma 2.1]{CSS21}.  Notice that the volume form in the coordinates given by the stereographic projection can be written as
    \[
    dV_g = \left[  \frac{2}{1+\vert x\vert^2} \right]^N dx = \left(\left[\frac{2}{1+\vert x\vert^2}\right]^{\frac{N-2s}{2}}\right)^{\frac{2N}{N-2s}} dx = \psi_s^{2^\ast_{s}}(x) dx.
    \]
    The isometry between the Lebesgue spaces follows by density and using the following change of variables 
    \[
    \int_{\mathbb{S}^N} \vert u\vert^{2_{s}^\ast} dV_g = \int_{\mathbb{R}^N} \vert u\circ \sigma^{-1}\vert^{2_{s}^\ast} \psi_s^{2^\ast_{s}}dx= \int_{\mathbb{R}^N} \vert \iota_s(u)\vert^{2_{s}^\ast} dx\qquad \text{for $u\in  C^\infty(\mathbb{S}^N)$}.
    \]
    
    Lemma \ref{lem:1} tells us that, for any $v\in  C^\infty(\mathbb{S}^N)$, $(\mathscr{P}^s_{g} v)\circ \sigma^{-1} = \psi_s^{1-2^\ast_{s}}(-\Delta)^s(\iota_s(v)).$ Then, for $u,v\in  C^\infty(\mathbb{S}^N)$,
    \begin{equation}\label{Equation:Isometry}
    \langle u,v \rangle_{H_g^s(\mathbb{S}^N)}
    = \int_{\mathbb{S}^N\smallsetminus\{-e_{N+1}\}} u \mathscr{P}^s_{g} v
    = \int_{\mathbb{R}^N} u\circ\sigma^{-1} \psi_s^{1-2^\ast_{s}}(-\Delta)^s(\iota_s(v))\psi_s^{2_{s}^\ast}
    = \int_{\mathbb{R}^N} \iota_s(u) (-\Delta)^s\iota_s(v)
    = \langle \iota_s(u),\iota_s(v)\rangle,
    \end{equation}
    which yields the isometric isomorphism between $D^s(\mathbb{R}^N)$ and $H_g^s(\mathbb{S}^N)$ by a density argument.
\end{proof}

\begin{definition}
    Given $U\subset \mathbb{S}^N$ open, the space $H_{g,0}^s(U)$ is given as the closure of $C_c^\infty(U)$ in $H_g^s(\mathbb{S}^N)$.
\end{definition}

From this definition we obtain that $H_{g,0}^s(U)\hookrightarrow H_g^s(\mathbb{S}^N)$ is continuous. The image $\Omega:=\sigma(U\smallsetminus\{-e_{N+1}\})$ is an open set of $\rn$ which is bounded if and only if $-e_{N+1} \notin \overline U$. Once again, the map $\iota_s$ defined in~\eqref{Equation:IsometricIsomorphism} yields an isometric isomorphism 
\[
\iota_s: H_{g,0}^s(U)\rightarrow D^{s}(\Omega).
\]
 As a consequence of Lemma~\ref{A}, we get that, if $U$ is of class $C^0$, then $H_{g,0}^s(U) = \{u\in H_g^s(\mathbb{S}^N)\; : \; u = 0 \text{ a.e. in }\mathbb{S}^N\smallsetminus U\}.$ Clearly, $H_{g,0}^s(\mathbb{S}^N)=H_g^s(\mathbb{S}^N).$


\subsection{The spectrum of the conformal fractional Laplacian on the sphere}\label{sec:phi}

Next, we show that all eigenfunctions of the Laplace-Beltrami operator are eigenfunctions of $\mathscr{P}^s_{g}$ (Lemma \ref{phinslem:0}), describe the asymptotic behavior of the Fourier symbol $\varphi_{N,s}$ of $\mathscr{P}^s_{g}$ (Lemma \ref{phinslem}), and show that the norm defined in \eqref{Hsndef} is equivalent to the standard norm in $H_g^s(\mathbb S^N)$ (Corollary \ref{cor:equiv}).

These results are not new, in fact, using scattering theory and properties of hypergeometric functions, an explicit formula for $\varphi_{N,s}$ is known, namely, 
\begin{align}\label{phiex}
\varphi_{N,s}(\lambda) = \frac{\Gamma\left(\frac{1}{2}+s+\sqrt{\lambda+\left(\frac{N-1}{2}\right)^2}\right)}{\Gamma\left(\frac{1}{2}-s+\sqrt{\lambda+\left(\frac{N-1}{2}\right)^2}\right)},    
\end{align}
see, for instance, \cite[Section 6.4]{DM18}.  In this paper, however, we present new proofs   which do not rely on scattering theory or extended problems. We believe that this approach is more self-contained and of independent interest, since it can be used also for more general nonlocal operators.

First, let us recall some known results.  The standard norm in the fractional Sobolev space $H_g^s(\mathbb{S}^N)$ has different equivalent characterizations (via Fourier series, heat kernel, hypersingular integrals, and harmonic extensions), see \cite{CFS24}. Here we use the one in terms of Fourier series. Let $N\geq 2$ and recall that the eigenvalues of the spherical Laplacian $-\Delta_g$ are given by
\begin{align}\label{Equation:ExplicitEigenvalues}
    b_i:=i(i+N-1),\qquad i\in \mathbb N_0:=\mathbb N\cup \{0\}.
\end{align}
For $i\geq 2$, the corresponding eigenspaces are spanned by $c_i:= \binom{N+i}{N}-\binom{N+i-2}{N}$ orthonormal smooth real-valued spherical harmonics $Y_{i,j}\in L^2(\SN)$ for $j=1,\ldots,c_i$, namely,
\begin{align}\label{Problem:CompleteEigenvalueSphere}
    -\Delta_{g}Y_{i,j} = b_i Y_{i,j}\quad \text{ in }\SN\quad \text{ for } j=1,\ldots,c_i.
\end{align}
For the exceptional cases $i\in\{0,1\}$, we have $c_0:=1$, $c_1:=N+1$, and $Y_{i,j}$ defined as above with $Y_{0,1}$ constant and $Y_{1,j}$ the trace over the sphere of a linear function.

Denoting the Fourier coefficients of $v\in L_g^2(\SN)$ by $\widehat v(i,j):=\int_{\SN}v(\xi) Y_{i,j}(\xi)\, dV_g(\xi),$ we have that 
\begin{align*}
 \langle u,v\rangle_{\ast} := \sum_{i=0}^\infty (1+b_i^{s})\sum_{j=1}^{c_i}\widehat u(i,j) \widehat v(i,j),\qquad u,v\in H_g^s(\mathbb S^N),   
\end{align*}
is an interior product for $H_g^s(\mathbb S^N)$ inducing the standard norm in $H_g^s(\mathbb S^N)$ given by
\begin{align*}
\|u\|_{\ast} :=\left( \sum_{i=0}^\infty (1+b_i^{s})\sum_{j=1}^{c_i}|\widehat u(i,j)|^2\right)^\frac{1}{2},\qquad u\in H_g^s(\mathbb S^N).
\end{align*}

In Corollary \ref{cor:equiv} below, we show that the norms  $\|\cdot\|_{H_g^s(\mathbb{S}^N)}$ and $\|\cdot\|_{*}$ are equivalent in $H_g^s(\mathbb{S}^N)$.

Let us show that all eigenfunctions of the Laplace-Beltrami operator on the sphere are eigenfunctions of $\mathscr{P}^s_{g}$.
\begin{lemma}\label{phinslem:0}
    Given $N\geq 1$ and $s\in (0,1)$, there exists a positive function $\varphi_{N,s}$ such that, if $\phi\in C^\infty(\mathbb S^N)$ is an eigenfunction of the Laplace-Beltrami operator on the sphere with eigenvalue $\lambda$, then $\phi$ is also an eigenfunction of $\mathscr{P}^s_{g}$ with eigenvalue $\varphi_{N,s}(\lambda)$, namely,
\begin{align}
\mathscr{P}^s_{g}\phi(z)= \varphi_{N,s}(\lambda)\phi(z)\qquad \text{ for all $z\in \mathbb S^N$.}    
\end{align}
\end{lemma}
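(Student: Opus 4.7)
The plan is to exploit the rotational invariance of $\mathscr{P}^s_g$ together with the classical decomposition of $L^2_g(\mathbb{S}^N)$ into spherical-harmonic eigenspaces, avoiding any appeal to scattering theory or to the harmonic extension. As a first step, I would record the $O(N+1)$-equivariance of $\mathscr{P}^s_g$: for every $R\in O(N+1)$, the kernel $|z-\zeta|^{-(N+2s)}$ is invariant under the diagonal action $(z,\zeta)\mapsto (Rz,R\zeta)$ and $dV_g$ is $O(N+1)$-invariant, so a change of variables in~\eqref{Psdef} gives
\begin{equation*}
\mathscr{P}^s_g(u\circ R^{-1})=(\mathscr{P}^s_g u)\circ R^{-1}\qquad\text{for every } u\in C^\infty(\mathbb{S}^N).
\end{equation*}

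Next, I would invoke the standard fact that $L^2_g(\mathbb{S}^N)=\bigoplus_{i\in\mathbb{N}_0}H_i$ is an orthogonal decomposition, where $H_i$ is the eigenspace of $-\Delta_g$ of eigenvalue $b_i=i(i+N-1)$; each $H_i$ is a finite-dimensional irreducible representation of $O(N+1)$, and distinct $H_i$'s are pairwise non-isomorphic. Combined with the equivariance above, Schur's lemma yields that $\mathscr{P}^s_g$ preserves every $H_i$ and acts on it as a scalar multiple of the identity, so there exists $\mu_i\in\mathbb{R}$ with $\mathscr{P}^s_g\phi=\mu_i\phi$ for every $\phi\in H_i$. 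Setting $\varphi_{N,s}(b_i):=\mu_i$ (and extending by any positive rule off the spectrum if so desired) yields the function claimed in the statement.

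To verify that each $\mu_i$ is positive, I would test the eigenequation against $\phi$ itself using the quadratic-form identity
\begin{equation*}
\int_{\mathbb{S}^N}\phi\,\mathscr{P}^s_g\phi\,dV_g=\frac{c_{N,s}}{2}\int_{\mathbb{S}^N}\!\!\int_{\mathbb{S}^N}\frac{(\phi(z)-\phi(\zeta))^2}{|z-\zeta|^{N+2s}}\,dV_g(\zeta)\,dV_g(z)+A_{N,s}\int_{\mathbb{S}^N}\phi^2\,dV_g,
\end{equation*}
whose right-hand side is strictly positive for any nonzero $\phi$ since $A_{N,s}>0$; hence $\mu_i\int_{\mathbb{S}^N}\phi^2\,dV_g>0$ and $\mu_i>0$. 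In the trivial case $i=0$, applying~\eqref{Psdef} to a constant eigenfunction reads off $\varphi_{N,s}(0)=A_{N,s}$ directly.

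The only delicate point in the argument above is the classical fact that the spaces $H_i$ are irreducible and pairwise non-isomorphic as $O(N+1)$-modules. For $N\geq 2$ this is a standard result which I would simply cite. For $N=1$, each $H_i$ with $i\geq 1$ is two-dimensional and splits into two one-dimensional pieces under $SO(2)$ but remains irreducible under the full orthogonal group $O(2)$; moreover distinct $H_i$'s are distinguished by their Laplacian eigenvalues, so Schur's lemma still applies and the argument goes through verbatim for every $N\geq 1$.
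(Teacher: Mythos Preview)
Your argument is correct and takes a genuinely different route from the paper's. The paper proceeds pointwise: it fixes the north pole, uses the zonal harmonic $Z_\lambda$ (obtained by Riesz representation on the finite-dimensional eigenspace) to reduce the principal-value integral to a one-variable integral against $1-\Theta_\lambda$, reads off an explicit constant $\varphi_{N,s}(\lambda)=c_{N,s}\mu_{N,s}(\lambda)+A_{N,s}$, and then transports to arbitrary points by rotation. Your approach is more structural: you use the $O(N+1)$-equivariance of $\mathscr{P}^s_g$ together with irreducibility and pairwise non-isomorphism of the spherical-harmonic spaces to invoke Schur's lemma, and obtain positivity from the quadratic form. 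Your route is shorter and avoids the zonal-harmonic computation, but it yields only the existence of the scalar $\mu_i$ with no explicit integral formula; the paper's explicit expression is precisely what is needed in the next lemma (Lemma~\ref{phinslem}) to extract the asymptotics $\varphi_{N,s}(b_i)\sim b_i^{s}$. One small point worth tightening: Schur's lemma over $\mathbb{R}$ only gives that the intertwiner lies in a real division algebra, so to conclude it is a real scalar you should either note absolute irreducibility of each $H_i$ under $O(N+1)$ (true for all $N\geq 1$, including your $N=1$ discussion), or use that $\mathscr{P}^s_g$ is symmetric so its restriction to $H_i$ is a self-adjoint intertwiner and hence a real multiple of the identity in any case.
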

\begin{proof}
    Fix $\phi\in C^\infty(\mathbb S^N)$ an eigenfunction of the Laplace-Beltrami operator on the sphere with eigenvalue $\lambda$. We claim that 
    \begin{align}\label{appclaim}
    \mathscr{P}^s_{g} \phi (e_{N+1})= \varphi_{N,s}(\lambda)\phi(e_{N+1})    
    \end{align}
    for some positive function $\varphi_{N,s}$, and where $e_{N+1}\in\mathbb{S}^N$ denotes the north pole.  Let $\mathscr H_\lambda$ be the eigenspace of the Laplace-Beltrami operator $-\Delta_g$ on the sphere corresponding to the eigenvalue $\lambda$. Consider the linear functional $L:\mathscr H_\lambda\to \r$ given by $L(\psi)=\psi(e_{N+1})$. Since $\mathscr H_\lambda$ has finite dimension, we have that $L$ is continuous and, by Riesz representation theorem, there is $Z_\lambda \in \mathscr H_\lambda$ such that
    \begin{align}\label{duality}
    \psi(e_{N+1}) = L(\psi) =\int_{\mathbb S^N} Z_\lambda \psi \, dV_g\qquad \text{ for all }\psi\in  \mathscr H_\lambda.    
    \end{align}

    By \cite[Theorem 2.12, Section IV.2]{MR304972}, we know that $Z_\lambda$ is a function that only depends on the $(N+1)$-coordinate and, therefore, it is invariant by
    \[
    \mathcal G:=\{\rho\in O(N+1) : \rho(e_{N+1})=e_{N+1}\} \simeq O(N).
    \]
    
    By \cite[Corollary 2.9, Section IV.2]{MR304972}, we have that $Z_\lambda(e_{N+1})\neq 0$, hence the renormalization
    \[
    \Theta_\lambda := \frac{Z_\lambda}{Z_\lambda(e_{N+1})}
    \]
     is well defined. Let $Y$ denote the average of $(\phi - \phi(e_{N+1}) \Theta_\lambda)$ over $\mathcal G$, namely,
    \begin{align}\label{Ydef}
    Y(\zeta) &:= \int_{\mathcal G} (\phi(\rho\zeta) - \phi(e_{N+1}) \Theta_\lambda(\rho\zeta))\,  d\mu(\rho)   
    \qquad \text{for $\zeta\in  \mathbb S^N$,}
    \end{align}
    where $\mu$ is the Haar measure on $\mathcal G$ with $\mu(\mathcal G)=1$. The function $Y$ is also $\mathcal G$-invariant \cite[Theorem 5.14]{RudinBook}, meaning that $Y(\zeta) = Y(\xi)$ for any
    \[
    \xi \in U(\zeta_{N+1}):=\{\xi \in \mathbb S^N \::\: \xi_{N+1}=\zeta_{N+1} \}.
    \]
    Let $\zeta \in \mathbb S^N\setminus \{\pm e_{N+1}\}$. By integrating the identity $Y(\zeta) = Y(\xi)$ with respect to $\xi \in U(\zeta_{N+1})$,
    \[
    \mathfrak h^{N-1}(U(\zeta_{N+1}))Y(\zeta)=\int_{U(\zeta_{N+1})}Y(\xi)\, d\mathfrak h^{N-1}(\xi) = \int_{U(\zeta_{N+1})}\int_{\mathcal G}(\phi(\rho\xi) - \phi(e_{N+1}) \Theta_\lambda(\rho\xi)) \, d\mu(\rho) \, d\mathfrak h^{N-1}(\xi),
    \]
where $\mathfrak h^{N-1}$ denotes the $(N-1)-$Hausdorff measure.  Applying Fubini's theorem and a change of variables,
    \begin{align*}
    \mathfrak h^{N-1}(U(\zeta_{N+1}))Y(\zeta)&=\int_{\mathcal G}\int_{U(\zeta_{N+1})} (\phi(\rho\xi) - \phi(e_{N+1})\Theta_\lambda(\rho\xi))   \, d\mathfrak h^{N-1}(\xi) \,  d\mu(\rho)\\
    &=\int_{U(\zeta_{N+1})} (\phi(\xi) - \phi(e_{N+1})\Theta_\lambda(\xi))    \, d\mathfrak h^{N-1}(\xi),
    \end{align*}
     Hence,
     \begin{align}\label{Ydef2}
    Y(\zeta) = \fint_{U(\zeta_{N+1})} (\phi(\xi) - \phi(e_{N+1}) \Theta_\lambda(\xi))\,  d\mathfrak h^{N-1}(\xi)\qquad \text{for $\zeta\in  \mathbb S^N \setminus \{\pm e_{N+1}\}$.}
    \end{align}
    A similar formula also holds at the poles, with the caveat that $U(\pm 1)$ are points instead of $(N-1)$-dimensional spheres.

    By applying $-\Delta_g$ in \eqref{Ydef} or by using that $Y$ is an average of functions in the vector space $\mathscr H_\lambda$, we have that $Y\in \mathscr H_\lambda$. Since $Y$ only depends on the $(N+1)$-coordinate, we have, by \cite[Theorem 1.12, Section IV.2]{MR304972}, that there is $\alpha\in \r$ such that $Y=\alpha \Theta_\lambda$. However, $Y(e_{N+1}) = 0$ since the integrand in \eqref{Ydef2} continuously vanishes at $e_{N+1}$. Then, by \eqref{duality},
    \begin{align*}
        0 = Y(e_{N+1}) = \alpha \int_{\mathbb S^N} \Theta_\lambda Z_\lambda \, dV_g=\alpha Z_\lambda(e_{N+1}) \int_{\mathbb S^N} \Theta_\lambda^2.    
    \end{align*}

    This implies that $\alpha=0$, namely, $Y=\alpha \Theta_\lambda=0$ for every $t\in(-1,1)$, and therefore, by \eqref{Ydef},
    \begin{align}\label{a1}
    \int_{U(t)} \phi \, d\mathfrak h^{N-1} = \phi(e_{N+1}) \int_{U(t)}\Theta_\lambda \, d\mathfrak h^{N-1} \qquad \text{for $\zeta\in  \mathbb S^N$}.
    \end{align}

    Observe that, for $\zeta = (\varphi\sin\theta,\cos\theta) \in \mathbb S^N$ with $(\theta,\varphi) \in [0,\pi]\times \mathbb S^{N-1}$,
    \[
    |e_{N+1} - \zeta|^2= 2-2\cos\theta = 4\sin(\theta/2)^2.
    \]
    Using the changes of variables 
    \begin{align*}
    &[0,\pi]\times \mathbb S^{N-1}\ni (\theta,\varphi)  \mapsto \zeta = (\varphi\sin\theta,\cos\theta) \in \SN,\\
    &\mathbb S^{N-1}\times\{\cos\theta\} \ni (\varphi,\cos\theta)  \mapsto \xi=(\varphi \sin \theta,\cos\theta) \in (\mathbb S^{N-1})\sin(\theta)\times \{\cos\theta\}=U(\cos\theta)\quad \text{ for }\theta\in[0,\pi],
    \end{align*}
     we obtain that
    \begin{align*}
    &p.v.\int_{\SN}\frac{\phi(e_{N+1})-\phi(\zeta)}{|e_{N+1} - \zeta|^{N+2s}}\, dV_g(\zeta)\\ 
    &=p.v.\int_{0}^\pi\int_{\mathbb S^{N-1}}\frac{\phi(e_{N+1})-\phi(\varphi\sin\theta,\cos\theta)}{(2\sin(\theta/2))^{N+2s}}\, d\mathfrak h^{N-1}(\varphi)\, (\sin\theta)^{N-1}d\theta\\
    &=p.v.\int_0^\pi\frac{(\sin\theta)^{N-1}}{(2\sin(\theta/2))^{N+2s}}\left(\phi(e_{N+1})\mathfrak h^{N-1}(\mathbb S^{N-1})-
    \int_{\mathbb S^{N-1}}\phi(\varphi\sin\theta,\cos\theta)\, d\mathfrak h^{N-1}(\varphi)\right)\, d\theta\\
    &=p.v.\int_0^\pi \frac{(\sin\theta)^{N-1}}{(2\sin(\theta/2))^{N+2s}}\left(\phi(e_{N+1})\mathfrak h^{N-1}(\mathbb S^{N-1})-
    \frac{1}{(\sin\theta)^{N-1}}\int_{U(\cos\theta)}\phi(\xi)\, d\mathfrak h^{N-1}(\xi)\right)\, d\theta\\
    &\overset{\eqref{a1}}{=}\phi(e_{N+1}) p.v.\int_0^\pi\frac{(\sin\theta)^{N-1}}{(2\sin(\theta/2))^{N+2s}} \left(\mathfrak h^{N-1}(\mathbb S^{N-1})-
    \frac{1}{(\sin\theta)^{N-1}}\int_{U(\cos\theta)}\Theta_\lambda\, d\mathfrak h^{N-1}\right)\, d\theta\\
    &= \mu_{N,s}(\lambda)\phi(e_{N+1}),
    \end{align*}
    where 
    \[
    \mu_{N,s}(\lambda) := \mathfrak h^{N-1}(\mathbb S^{N-1}) p.v.\int_0^\pi\frac{1-\Theta_\lambda(\cos\theta)}{(2\sin(\theta/2))^{N+2s}} (\sin\theta)^{N-1} \, d\theta,
    \]
using a slight abuse of notation writing $\Theta_\lambda(t) = \Theta_\lambda(\zeta)$ for any $\zeta\in U(t)$.  Then, by \eqref{Psdef}, 
    \begin{align*}
    \mathscr{P}^s_{g} \phi (e_{N+1})= \varphi_{N,s}(\lambda)\phi(e_{N+1})\qquad \text{ with }\varphi_{N,s}(\lambda) := c_{N,s}\mu_{N,s}(\lambda) + A_{N,s}.    
    \end{align*}
    From \cite[Corollary 2.9, Section IV.2]{MR304972} it follows that $|\Theta_\lambda|\leq 1$ over the interval $[-1,1]$. This observation implies that $\mu_{N,s}\geq 0$ and, consequently, $\varphi_{N,s}(\lambda)>0$.

    To conclude, let $\widetilde\phi\in \mathscr H_\lambda$, $z\in \mathbb S^N$, and let $\rho\in O(N+1)$ be such that $\rho z = e_{N+1}$. Then, 
    $\phi:=\widetilde\phi\circ \rho^{-1} \in \mathscr H_\lambda$. By definition \eqref{Psdef} the operator $\mathscr{P}^s_{g}$ is invariant under rotations, namely, $\mathscr{P}^s_{g}\widetilde\phi(z)
    = \mathscr{P}^s_{g}\phi(e_{N+1})$. Then, by \eqref{appclaim}, 
    $\mathscr{P}^s_{g}\widetilde\phi(z)
    = \mathscr{P}^s_{g}\phi(e_{N+1})
    =  \varphi_{N,s}(\lambda)\phi(e_{N+1}) 
    =  \varphi_{N,s}(\lambda)\widetilde\phi(z),$ as claimed.
\end{proof}

Now we describe the asymptotic behavior of $\varphi_{N,s}$.

\begin{lemma}\label{phinslem}
    Let $\varphi_{N,s}$ be as in Lemma \ref{phinslem:0} and $b_i:=i(N -1 +i)$, then
    \begin{align}\label{lambdaest}
        0 < \liminf_{i \to\infty} b_i^{-s}\varphi_{N,s}(b_i)\leq \limsup_{i\to\infty} b_i^{-s}\varphi_{N,s}(b_i) < \infty.
    \end{align}
\end{lemma}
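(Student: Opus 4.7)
The plan is to obtain two-sided bounds on $\varphi_{N,s}(b_i)$ by directly estimating the integral representation derived in the proof of Lemma~\ref{phinslem:0}:
\[
\varphi_{N,s}(b_i) = c_{N,s}\,|\mathbb{S}^{N-1}| \, p.v.\int_0^\pi \frac{1 - \widetilde\Theta_{b_i}(\theta)}{(2\sin(\theta/2))^{N+2s}}\,(\sin\theta)^{N-1} \, d\theta \;+\; A_{N,s},
\]
where $\widetilde\Theta_{b_i}(\theta) := \Theta_{b_i}(\cos\theta)$ denotes the zonal spherical harmonic normalized by $\widetilde\Theta_{b_i}(0) = 1$. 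Since $A_{N,s}$ is a fixed constant and $b_i^s \to \infty$, it suffices to prove $c\, b_i^s \leq \mu_{N,s}(b_i) \leq C\, b_i^s$ for large $i$. Noting that the weight $\frac{(\sin\theta)^{N-1}}{(2\sin(\theta/2))^{N+2s}}$ is comparable to $\theta^{-1-2s}$ on $(0, \pi/2)$ and uniformly bounded on $(\pi/2, \pi)$, the problem reduces to controlling $1 - \widetilde\Theta_{b_i}(\theta)$ at two different scales of $\theta$.

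For the upper bound, the regularity of $\widetilde\Theta_{b_i}$ at the pole forces $\widetilde\Theta_{b_i}'(0) = 0$, and Bernstein's inequality for spherical polynomials of degree $i$ gives $\|\widetilde\Theta_{b_i}''\|_\infty \leq C b_i$; Taylor's theorem with Lagrange remainder then yields the uniform estimate $|1 - \widetilde\Theta_{b_i}(\theta)| \leq C b_i \theta^2$. Combining this with the trivial bound $|1 - \widetilde\Theta_{b_i}| \leq 2$ (from the maximality of $\widetilde\Theta_{b_i}$ at the pole) and splitting the integral at $\theta_0 := b_i^{-1/2}$ leads to
\[
\mu_{N,s}(b_i) \lesssim b_i \int_0^{\theta_0} \theta^{1-2s}\, d\theta \;+\; \int_{\theta_0}^{\pi} \theta^{-1-2s}\, d\theta \;+\; O(1) \;\lesssim\; b_i^s.
\]

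For the lower bound, I would invoke the classical decay estimate for Gegenbauer polynomials (see, e.g., Szeg\H{o}),
\[
|\widetilde\Theta_{b_i}(\theta)| \leq C\, (i \sin\theta)^{-(N-1)/2}, \qquad \theta \in (0, \pi),\ N \geq 2.
\]
Consequently, there exists $A = A(N) > 0$ such that $|\widetilde\Theta_{b_i}(\theta)| \leq 1/2$ for every $\theta \in [A/i, \pi/2]$ and $i$ sufficiently large. Hence $1 - \widetilde\Theta_{b_i}(\theta) \geq 1/2$ on that interval, and therefore
\[
\mu_{N,s}(b_i) \gtrsim \int_{A/i}^{\pi/2} \theta^{-1-2s}\, d\theta \gtrsim i^{2s} \asymp b_i^s.
\]
Combining the two estimates yields~\eqref{lambdaest}.

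The chief obstacle is the lower bound, which depends on a uniform sharp pointwise estimate for zonal spherical harmonics away from the poles --- a classical but nontrivial result from the theory of orthogonal polynomials. An alternative quicker route uses the explicit formula~\eqref{phiex} together with Stirling's asymptotic $\Gamma(x+a)/\Gamma(x+b) \sim x^{a-b}$ as $x\to \infty$, which immediately yields $\varphi_{N,s}(\lambda) \sim \lambda^s$ as $\lambda \to \infty$; however, this conflicts with the paper's stated goal of avoiding scattering-theoretic inputs, so the integral-based argument above is preferable within the paper's self-contained framework.
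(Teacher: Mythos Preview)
Your argument is correct but follows a genuinely different route from the paper's. The paper proceeds by rescaling the Gegenbauer ODE: after the change of variables $\tau=\lambda(1-t)$ the factor $\lambda^s$ is extracted directly, and the remaining integrand is shown to converge (via an ODE limit and dominated convergence) to a fixed positive integral involving the solution $z_0$ of $2\tau z''+Nz'+z=0$, $z(0)=1$. Positivity of the limit is then read off from $z_0'(0)=-1/N$. By contrast, you obtain the upper bound from Bernstein's inequality and a dyadic split at $\theta_0=b_i^{-1/2}$, and the lower bound from Szeg\H{o}'s pointwise decay estimate for Gegenbauer polynomials. Your approach is more direct and makes the source of the $b_i^s$ scaling transparent, but it imports two nontrivial classical inputs (Bernstein and Szeg\H{o}); the paper's ODE-limit argument is more self-contained and, as the authors emphasize, transfers to other nonlocal operators whose zonal eigenfunctions satisfy an analogous singular ODE but may lack an orthogonal-polynomial theory with ready-made decay bounds. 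Your closing remark about the shortcut via~\eqref{phiex} and Stirling is accurate and appropriately flagged as outside the paper's self-contained scope.
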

\begin{proof}
    Using the notation in the proof of Lemma \ref{phinslem:0}, we have that $\varphi_{N,s}(b_i) := c_{N,s}\mu_{N,s}(b_i) + A_{N,s}.$ To simplify notation, let $\lambda:=b_i$. Using the identity $
    \sin\left(\theta/2\right)^2 = (1 - \cos\theta)/2$ and the change of variables $t = \cos\theta$ with $d\theta = -(1-t^2)^{-1/2}dt$, we have that
    \begin{align*}
    \mu_{N,s}(\lambda)
    &= \mathfrak h^{N-1}(\mathbb S^{N-1})2^{-N/2-s} p.v.\int_{-1}^1\frac{1-\Theta_\lambda(t)}{(1-t)^{N/2+s}} (1-t^2)^{(N-1)/2}(1-t^2)^{-1/2}dt\\
    &= \mathfrak h^{N-1}(\mathbb S^{N-1})2^{-N/2-s}p.v.\int_{-1}^1\frac{1-\Theta_\lambda(t)}{(1-t)^{1+s}} (1+t)^{(N-1)/2}dt.
    \end{align*}
    
    Note that the integrand over the interval $[-1,0]$ is uniformly bounded. As $\lambda$ tends to infinity, this portion of the integral does not contribute to the asymptotic behavior of $\mu_{N,s}$. Now we just need to focus on the integration over the interval $[0,1]$, allowing us disregard the factor $(1+t)^{(N-1)/2}$, which is comparable to a constant.
    
    By the change of variables $\tau=\lambda(1-t)$ we extract the factor $\lambda^s$, i.e.,
    \[
     \int_0^1\frac{1-\Theta_\lambda(t)}{(1-t)^{1+s}} dt = \lambda^s  \int_{0}^{\lambda}\frac{1-\Theta_\lambda(1-\tau/\lambda)}{\tau^{1+s}} d\tau.
    \]
    
    The function $\Theta_\lambda$ is a polynomial solution of the Gegenbauer equation 
    \[
    (1-t^2)y''(t) - Nty'(t) + \lambda y(t) = 0,\qquad t\in[-1,1],
    \]
    which generalizes the Legendre equation and is a particular case of a Jacobi equation (see \cite[page 148]{MR304972}). 
    
    Given that $t=1$ is a regular singular point, the polynomial can be determined just from the condition $y(1)=\Theta_\lambda(e_{N+1})=1$ and the recurrence relations for the coefficients obtained from the ODE.
    
    By the change of variables $\tau=\lambda(1-t)$ and $z(\tau) = y(\lambda(1-\tau)),$ we obtain that
    \[
    (2-\lambda^{-1}\tau)\tau z''(\tau) + N(1-\lambda^{-1}\tau)z'(\tau) + z(\tau) = 0, \qquad z(0) = 1.
    \]
    As $\lambda\to \infty$, using standard arguments with Lyapunov functions (see, for instance, Lemma 3.2 in \cite{FernandezPetean2020}), we get that the solutions above converge pointwisely to the solution $z_0$ of the problem
    \[
    2\tau z''(\tau) + Nz'(\tau) + z(\tau) = 0,\qquad z(0) = 1.
    \]
    Moreover, this convergence is uniform for $\tau \in [0,1]$, meanwhile $z_0$ must be bounded in the whole interval $[0,\infty)$, being the limit of functions bounded between $\pm 1$ on the intervals $(0,\lambda)$. By dominated convergence,
    \begin{align}\label{a2}
    \lim_{\lambda\to\infty}\lambda^{-s} \int_0^1\frac{1-\Theta_\lambda(t)}{(1-t)^{1+s}} dt = \int_{0}^{\infty}\frac{1-z_0(\tau)}{\tau^{1+s}} d\tau.
    \end{align}
    Using the theory of power series solutions of ODEs, we know that $z_0$ is differentiable at $\tau=0$, with $z_0'(0)=-1/N$, and thus 
    the integral on the right in \eqref{a2} is finite. Recalling that $\lambda=b_i$, this implies that $\limsup_{i \to\infty} b_i^{-s}\mu_{N,s}(b_i) < \infty.$ 
    
    Moreover, we also know that the integrand is non-negative because it is the limit of non-negative functions. Finally, to see that the right-hand side is positive we just have to estimate the integral around zero using that $z_0'(0)=-1/N$. For $\delta>0$ sufficiently small, $z_0(\tau)<1-\tau/(2N)$ for any $\tau \in (0,\delta)$, then
    \[
    \int_{0}^{\infty}\frac{1-z_0(\tau)}{\tau^{1+s}} d\tau \geq \frac{1}{2N}\int_{0}^{\delta}\tau^{-s} d\tau = \frac{\delta^{1-s}}{2N(1-s)} > 0,
    \]
    which implies that $0 < \liminf_{i\to\infty} b_i^{-s}\mu_{N,s}(b_i).$
\end{proof}

\begin{corollary}\label{cor:equiv}
The norms $\|\cdot\|_{H_g^s(\mathbb{S}^N)}$ and $\|\cdot\|_{*}$ are equivalent in $H_g^s(\mathbb{S}^N)$. 
\end{corollary}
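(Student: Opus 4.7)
The strategy is to diagonalize both norms simultaneously in the basis of real spherical harmonics and then combine the resulting one-dimensional comparison with a density argument.

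First, I would fix a test function $u\in C^\infty(\mathbb{S}^N)$ and expand it in spherical harmonics as $u = \sum_{i=0}^\infty\sum_{j=1}^{c_i}\widehat u(i,j)\,Y_{i,j}$, with rapid convergence in every Sobolev topology. Lemma~\ref{phinslem:0} asserts that each $Y_{i,j}$ is an eigenfunction of $\mathscr{P}_g^s$ with eigenvalue $\varphi_{N,s}(b_i)$, and by construction $\mathscr{P}_g^s$ is self-adjoint. Hence, using the integration-by-parts identity $\langle u,v\rangle_{H_g^s(\mathbb{S}^N)} = \int_{\mathbb{S}^N} u\,\mathscr{P}_g^s v\,dV_g$ and the orthonormality of the $Y_{i,j}$ in $L^2_g(\mathbb{S}^N)$, I obtain
\[
\|u\|_{H_g^s(\mathbb{S}^N)}^2 \;=\; \int_{\mathbb{S}^N} u\,\mathscr{P}_g^s u\,dV_g \;=\; \sum_{i=0}^\infty \sum_{j=1}^{c_i}\varphi_{N,s}(b_i)\,|\widehat u(i,j)|^2,
\]
while $\|u\|_*^2 = \sum_{i,j}(1+b_i^s)|\widehat u(i,j)|^2$ by definition.

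The second step is to show that the weight sequences $(\varphi_{N,s}(b_i))_{i\geq 0}$ and $(1+b_i^s)_{i\geq 0}$ are comparable. Lemma~\ref{phinslem} yields
\[
0 \;<\; \liminf_{i\to\infty}\frac{\varphi_{N,s}(b_i)}{b_i^s} \;\leq\; \limsup_{i\to\infty}\frac{\varphi_{N,s}(b_i)}{b_i^s} \;<\; \infty,
\]
so there exist $i_0\in\mathbb{N}$ and $C_1>1$ with $C_1^{-1}(1+b_i^s) \leq \varphi_{N,s}(b_i) \leq C_1(1+b_i^s)$ for all $i\geq i_0$. For the remaining finitely many indices $i<i_0$, Lemma~\ref{phinslem:0} guarantees $\varphi_{N,s}(b_i)>0$, and both $\varphi_{N,s}(b_i)$ and $1+b_i^s$ are finite and positive, so enlarging $C_1$ to a constant $C>0$ I get the uniform two-sided bound $C^{-1}(1+b_i^s) \leq \varphi_{N,s}(b_i) \leq C(1+b_i^s)$ for every $i\in\mathbb{N}\cup\{0\}$. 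The borderline value is $i=0$, where $b_0=0$; here the eigenfunctions are constants, direct evaluation in \eqref{Psdef} gives $\varphi_{N,s}(0)=A_{N,s}>0$, and $1+b_0^s=1$, so the comparison is immediate.

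Combining the two steps yields $C^{-1}\|u\|_*^2 \leq \|u\|_{H_g^s(\mathbb{S}^N)}^2 \leq C\,\|u\|_*^2$ for every $u\in C^\infty(\mathbb{S}^N)$. Since $C^\infty(\mathbb{S}^N)$ is dense in $H_g^s(\mathbb{S}^N)$ by the very definition of this space, a sequence of smooth functions is Cauchy for $\|\cdot\|_{H_g^s(\mathbb{S}^N)}$ if and only if it is Cauchy for $\|\cdot\|_*$; the two completions therefore coincide as Banach spaces carrying equivalent norms. The conceptual difficulty of the statement is entirely absorbed by Lemmas~\ref{phinslem:0} and~\ref{phinslem}: once one knows that spherical harmonics diagonalize $\mathscr{P}_g^s$ with eigenvalues of order $1+b_i^s$, the corollary is a pure diagonalization-plus-density argument and carries no further obstacle.
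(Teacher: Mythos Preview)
Your proof is correct and follows essentially the same route as the paper: diagonalize $\|\cdot\|_{H_g^s(\mathbb{S}^N)}$ in the spherical-harmonic basis via Lemma~\ref{phinslem:0}, compare the weights $\varphi_{N,s}(b_i)$ and $1+b_i^s$ using Lemma~\ref{phinslem}, and pass to the closure by density. Your write-up is in fact more careful than the paper's, which simply invokes the asymptotic bound and density without spelling out the treatment of the finitely many small indices or the case $i=0$.
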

\begin{proof}
By Lemma \ref{phinslem:0}, we have that $\mathscr{P}^s_g\, Y_{i,j} =  \varphi_{N,s}(b_i) Y_{i,j}$.  Then, for $u\in  C^\infty(\SN)$, $    u(z)=\sum_{i=0}^\infty \sum_{j=1}^{c_i}\widehat u(i,j) Y_{i,j}(z)$ and then, by orthonormality,
\begin{align*}
\|u\|_{H_g^s(\mathbb{S}^N)}^2
&=\int_{\SN} u \mathscr{P}^s_g u \, dV_g
=\int_{\SN} 
\left(\sum_{m=0}^\infty \sum_{k=1}^{c_m}\widehat u(m,k) Y_{k,m}\right)
\left(\sum_{i=0}^\infty \sum_{j=1}^{c_i}\widehat u(i,j) \varphi_{N,s}(b_i) Y_{i,j}\right)
 \, dV_g
=
\sum_{i=0}^\infty \sum_{j=1}^{c_i}|\widehat u(i,j)|^2 \varphi_{N,s}(b_i).
\end{align*}
The equivalence between the norms $\|\cdot\|_{H_g^s(\mathbb{S}^N)}$ and $\|\cdot\|_{*}$ now follows by Lemma \ref{lambdaest} and a density argument. 
\end{proof}

\begin{remark}
The converse statement of Lemma \ref{phinslem:0} is also true, namely, if $\phi$ is an eigenfunction of $\mathscr{P}_g^s$, then also $\phi$ is an eigenfunction of the Laplace-Beltrami operator. To see this, one can use the closed expression of $\varphi_{N,s}$ given in \eqref{phinslem:0} as follows: first, observe that the function $\varphi_{N,s}$ is strictly increasing in $(0,\infty)$. Indeed, for $s\in(0,\infty)$ the function $\nu(t):=\frac{\Gamma(t+s)}{\Gamma(t-s)}$ is strictly increasing in $(s,\infty)$, because
\[
\nu'(t)=\frac{\Gamma (t+s) (\psi ^{(0)}(t+s)-\psi ^{(0)}(t-s))}{\Gamma
   (t-s)},
\]
where $\psi^{(m)}$ is the polygamma function of order $m\geq 0$; as $\psi^{(0)}$ is strictly increasing in $(0,\infty)$ (because $\psi^{(1)}(t)=\frac{d}{dt}\psi^{(0)}(t)>0$ in $(0,\infty)$), we have that $\nu'>0$ in $(s,\infty)$. \newline
\indent With this in mind, let $\phi$ be an eigenfunction of $\mathscr{P}_g^s$ with eigenvalue $\lambda$. Multiplying $\mathscr{P}_g^s \phi = \lambda \phi$ by an arbitrary spherical harmonic $Y_{i,j}$ and integrating over the sphere, we obtain that $\varphi_{N,s}(b_i) \langle Y_{i,j}, \phi \rangle_{L^2(\mathbb{S}^N)} = \lambda \langle Y_{i,j}, \phi \rangle_{L^2(\mathbb{S}^N)}$. Given that $\phi$ is non-trivial and  that $(Y_{i,j})$ is an orthonormal  basis in $L^2_g(\mathbb{S}^N)$, there must be some $i$ and $j$ such that $ \langle Y_{i,j}, \phi \rangle_{L^2(\mathbb{S}^N)}\neq 0$, for which $\varphi_{N,s}(b_i) = \lambda$.
Now, as $\varphi_{N,s}$ is continuous and strictly increasing in $(0,\infty)$, there is a unique $i_0\in\mathbb{N}_0$ such that $\varphi_{N,s}(b_{i_0}) = \lambda$, otherwise, we would obtain that $\varphi_{N,s}(i_1)=\varphi_{N,s}(i_2)$ for some $i_1\neq i_2$,  contradicting the monotonicity of $\varphi_{N,s}$. Hence, $\phi = \sum_{j=1}^{c_{i_0}} \widehat{u}(i_0,j)Y_{i_0,j}$, meaning it is also an eigenfunction for $-\Delta_g$ with eigenvalue $b_{i_0}$.
\end{remark}


\section{Spaces with symmetries}\label{Section:SymmetricSobolevSPaces}

Given $G=O(m)\times O(n)$ and a functional space $X$ over some  $G$-invariant domain $W$ in $\mathbb{S}^N$ or in $\mathbb{R}^N$, we denote the subspace of all the $G$-invariant functions in $X$ as
\begin{equation*}
    X^G:=\{ u\in X \; : \; u \text{ is $G$-invariant}\}.
\end{equation*}


As usual, the group $G$ acts isometrically on $L^2_g(\mathbb{S}^N)$ and on $H_g^s(\mathbb{S}^N)$ as $\gamma u := u\circ\gamma^{-1}$, $\gamma\in G$. We say that a function $I:H^g(\mathbb{S}^N)\rightarrow\mathbb{R}$ is $G$-invariant if $I(\gamma u)=I(u)$ for any $u\in H_g^s(\mathbb{S}^N)$ and any $\gamma\in G$.

\begin{lemma}\label{Lemma:G-invariantFunctional}
For any $s\in(0,1]$ and any $G$-invariant function $F\in L_g^2(\mathbb{S}^N)^G$, the functional $I_s: H^s_g(\mathbb{S}^N)\rightarrow\mathbb{R}$ given by
\[
I_s(u):=\frac{1}{2}\Vert u \Vert_{H_g^s(\mathbb{S}^N)}^2 - \int_{\mathbb{S}^N}u F \; dV_g
\]
is $G$-invariant.
\end{lemma}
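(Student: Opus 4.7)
The plan is to split $I_s$ into the quadratic part $u\mapsto \tfrac12\|u\|_{H_g^s(\SN)}^2$ and the linear part $L(u):=\int_{\SN}uF\,dV_g$, and show that each is invariant under the action $\gamma u := u\circ\gamma^{-1}$ of $G$. Invariance of $I_s$ then follows by linearity.

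For the quadratic part, I would exploit the fact that $G\subset O(N+1)$ acts on $\R^{N+1}$ by Euclidean isometries. Hence each $\gamma\in G$ preserves the chordal distance $|z-\zeta|$ that appears in the kernel $|z-\zeta|^{-N-2s}$ defining the norm \eqref{Hsnorm}, and it also preserves the Riemannian volume $dV_g$ on $\SN$ (the standard metric being the one induced from $\R^{N+1}$). Applying the change of variables $z\mapsto\gamma z$, $\zeta\mapsto\gamma\zeta$ in the double integral, together with $(\gamma u)(z)=u(\gamma^{-1}z)$, converts $\|\gamma u\|_{H_g^s(\SN)}^2$ back into $\|u\|_{H_g^s(\SN)}^2$; the $L^2$ term is handled by the same change of variables. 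By density of $C^\infty(\SN)$ this extends to all of $H_g^s(\SN)$, confirming that $G$ acts isometrically (as already asserted before the lemma).

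For the linear part, the same change of variables $z\mapsto \gamma z$ combined with the $G$-invariance of $F$ gives
\[
L(\gamma u)=\int_{\SN} u(\gamma^{-1}z)F(z)\,dV_g(z)=\int_{\SN} u(w)F(\gamma w)\,dV_g(w)=\int_{\SN} u(w)F(w)\,dV_g(w)=L(u),
\]
where we used $F(\gamma w)=F(w)$ in the last equality.

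Since both pieces are $G$-invariant, so is $I_s$. There is no real obstacle here; the argument is a routine change-of-variables computation, and the only care needed is to invoke that $\gamma\in O(N+1)$ preserves both the ambient Euclidean distance and $dV_g$, together with the $G$-invariance of $F$.
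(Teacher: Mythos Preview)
Your proposal is correct and follows essentially the same approach as the paper: both split $I_s$ into the quadratic norm term and the linear term, and verify $G$-invariance of each piece via the change of variables $z\mapsto\gamma z$, using that $\gamma\in O(N+1)$ preserves the Euclidean distance $|z-\zeta|$ and the volume $dV_g$, together with the $G$-invariance of $F$. The only cosmetic difference is that the paper treats the case $s=1$ separately by citing a reference, whereas your argument (via change of variables) handles it uniformly.
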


\begin{proof}
Decompose the functional $I_s$ as $I_s= J_s - \Sigma$, where $J_s,\Sigma:H_g^s(\mathbb{S}^N)\rightarrow\mathbb{R}$ are given by $J_s(u):=\frac{1}{2}\Vert u \Vert_{H_g^s(\mathbb{S}^N)}^2$ and $\Sigma(u):=\int_{\mathbb{S}^N}u F\; dV_g$. As $F$ is $G$-invariant and as $\gamma:\mathbb{S}^N\rightarrow\mathbb{S}^N$ is an isometry, by the change of variables theorem, we have that
\[
\Sigma(\gamma u) = \int_{\mathbb{S}^N}u\circ \gamma^{-1} F \; dV_g =  \int_{\mathbb{S}^N}u\circ \gamma^{-1} F\circ\gamma^{-1} \; dV_g = \int_{\mathbb{S}^N}u F \; dV_g = \Sigma(u), 
\]
and $\Sigma$ is $G$-invariant.

Now we see that $J_s$ is $G$-invariant for any $s\in(0,1]$. The case $s=1$ is well known, see, for instance, \cite[Proposition 2]{ClappFernandezSaldana2021}. Now, for $s\in(0,1)$, notice that any $\gamma\in G$ is an isometry of $\mathbb{R}^{N+1}$, and, therefore, for any $z,\zeta\in\mathbb{S}^N$, $\vert \gamma z - \gamma \zeta\vert = \vert z - \zeta\vert.$ Then, for any $\gamma\in G$ and $u\in H_g^s(\mathbb{S}^N)$, we get that
\begin{align*}
J_s(\gamma u) &=\frac{1}{2} c_{N,s} \int_{\SN}\int_{\SN} \frac{(u(\gamma^{-1}(z))-u(\gamma^{-1}(\zeta)))^2}{|z-\zeta|^{N+2s}}\, dV_g(z)\, dV_g(\zeta) + A_{N,s}\int_{\SN}u\circ\gamma^{-1}\, dV_g \\
&=\frac{1}{2} c_{N,s} \int_{\SN}\int_{\SN} \frac{(u(\gamma^{-1}(z))-u(\gamma^{-1}(\zeta)))^2}{|\gamma^{-1}(z)-\gamma^{-1}(\zeta)|^{N+2s}}\, dV_g(z)\, dV_g(\zeta) + A_{N,s}\int_{\SN}u\circ\gamma^{-1}\, dV_g\\
&=\frac{1}{2} c_{N,s} \int_{\SN}\int_{\SN} \frac{(u(z)-u(\zeta))^2}{|z-\zeta|^{N+2s}}\, dV_g(z)\, dV_g(\zeta) + A_{N,s}\int_{\SN}u\, dV_g\\
&=J_s(u),
\end{align*}
as we wanted to prove.
\end{proof}

We use $-\Delta_g$ to denote the Laplace-Beltrami operator on the sphere.

\begin{lemma}\label{Lemma:UniquenessSymmetricProblem}
    Let $F \in L_g^2(\mathbb{S}^N)^G$ such that $\int_{\mathbb S^N} F \, dV_g=0$. Then  
    \begin{enumerate}[(i)]
    \item the problem
    \begin{equation}\label{Problem:Linear}
    -\Delta_g u = F \quad\text{ on }\ \mathbb{S}^N
    \end{equation}
    has a unique solution belonging to $H_g^1(\mathbb{S}^N)^G$ (up to adding constants);
    \item for each $s\in(0,1)$, the problem $\mathscr{P}_g^s u = F$ on $\mathbb{S}^N$ has a unique solution belonging to $H_g^s(\mathbb{S}^N)^G$ (up to adding constants).
    \end{enumerate}
\end{lemma}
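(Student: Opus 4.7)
The plan is to proceed variationally, leveraging Lemma~\ref{Lemma:G-invariantFunctional} together with Palais' principle of symmetric criticality to pass from symmetric critical points to true critical points, and using the spectral information from Proposition~\ref{Proposition:RestrictedEigenfunctions:intro} to dispose of uniqueness. For part $(ii)$, which is the cleaner of the two, I would consider the $G$-invariant functional
\begin{equation*}
I_s(u) := \tfrac{1}{2}\Vert u\Vert_{H_g^s(\mathbb{S}^N)}^2 - \int_{\mathbb{S}^N}Fu\, dV_g
\end{equation*}
on $H_g^s(\mathbb{S}^N)$. Since $\Vert\cdot\Vert_{H_g^s(\mathbb{S}^N)}$ is a full norm (equivalent to the standard one by Corollary~\ref{cor:equiv}) and $F\in L^2_g(\mathbb{S}^N)$ induces a continuous linear form on $H_g^s(\mathbb{S}^N)$, the functional $I_s$ is strictly convex, coercive, and weakly lower semicontinuous on $H_g^s(\mathbb{S}^N)^G$; the direct method then produces a unique minimizer $u_0\in H_g^s(\mathbb{S}^N)^G$. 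By Palais' principle of symmetric criticality, $u_0$ is actually a critical point of $I_s$ on all of $H_g^s(\mathbb{S}^N)$, which is precisely the weak formulation of $\mathscr{P}_g^s u_0 = F$. Uniqueness then follows from positive definiteness: if $\mathscr{P}_g^s w = 0$, then $\Vert w\Vert_{H_g^s(\mathbb{S}^N)}^2 = \int_{\mathbb{S}^N} w\,\mathscr{P}_g^s w\, dV_g = 0$.

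For part $(i)$ there is a genuine kernel to contend with, namely the constants (the eigenfunction $\Phi_0$ of Proposition~\ref{Proposition:RestrictedEigenfunctions:intro} with $\lambda_0=0$), which forces the compatibility condition $\int_{\mathbb{S}^N}F\, dV_g=0$. I would minimize $I_1(u) := \tfrac{1}{2}\int_{\mathbb{S}^N}\vert\nabla_g u\vert^2\, dV_g - \int_{\mathbb{S}^N}Fu\, dV_g$ on the closed subspace $V := \{u\in H_g^1(\mathbb{S}^N)^G \mid \int_{\mathbb{S}^N}u\, dV_g = 0\}$, on which the Poincar\'e inequality (which follows from $\lambda_1>0$ in Proposition~\ref{Proposition:RestrictedEigenfunctions:intro}) makes the Dirichlet seminorm equivalent to the full $H_g^1$-norm. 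The direct method yields $u_0\in V$ with $\int_{\mathbb{S}^N}\nabla_g u_0\cdot \nabla_g v\, dV_g = \int_{\mathbb{S}^N}Fv\, dV_g$ for every $v\in V$. To upgrade this identity to arbitrary $v\in H_g^1(\mathbb{S}^N)^G$, decompose $v = (v-\bar v)+\bar v$ with $\bar v$ its mean; the identity on constants holds trivially thanks to $\int_{\mathbb{S}^N}F\, dV_g = 0$. Palais' principle then promotes $u_0$ to a weak solution on $H_g^1(\mathbb{S}^N)$. Uniqueness up to additive constants follows because, again by Proposition~\ref{Proposition:RestrictedEigenfunctions:intro}, the kernel of $-\Delta_g$ on $H_g^1(\mathbb{S}^N)^G$ is spanned by $\Phi_0$.

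The step I would be most careful about is the symmetric criticality argument, but it is not a genuine obstacle: $G=O(m)\times O(n)$ acts by isometries on $\mathbb{S}^N$ and both the $H_g^s$-inner product and the linear term are manifestly invariant (as already used in the proof of Lemma~\ref{Lemma:G-invariantFunctional}), so Palais' principle applies verbatim. As a more hands-on alternative, one can bypass criticality altogether by expanding $F = \sum_{i\geq 0} c_i\,\Phi_i/\Vert \Phi_i\Vert_{L^2_g}$ in the orthogonal basis of Proposition~\ref{Proposition:RestrictedEigenfunctions:intro} (with $c_0=0$ thanks to the hypothesis $\int F\, dV_g=0$) and defining
\begin{equation*}
u := \sum_{i\geq 1} \frac{c_i}{\mu_i}\,\frac{\Phi_i}{\Vert \Phi_i\Vert_{L^2_g}},\qquad \mu_i=\lambda_i \text{ in case }(i),\qquad \mu_i=\varphi_{N,s}(\lambda_i) \text{ in case }(ii).
\end{equation*}
Convergence in $H_g^s(\mathbb{S}^N)^G$ follows from the lower bounds on $\mu_i$ (in case $(ii)$, from $\varphi_{N,s}(\lambda_i)\geq C^{-1}(1+\lambda_i^{s})$), the equation is verified term-by-term, and uniqueness can be read off directly from the spectrum.
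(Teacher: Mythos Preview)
Your proposal is correct and follows essentially the same route as the paper: both argue via the variational functional $I_s$ together with Palais' principle of symmetric criticality (Lemma~\ref{Lemma:G-invariantFunctional}), and both dispatch uniqueness by an energy argument. Your write-up is in fact more careful than the paper's, which simply appeals to ``standard variational methods'': you make explicit that for $(i)$ one must restrict to the mean-zero subspace $V$ to recover coercivity, and for $(ii)$ you correctly note that $\|\cdot\|_{H_g^s(\mathbb{S}^N)}$ is a genuine norm (thanks to the $A_{N,s}$ term), so that uniqueness is actually absolute rather than merely up to constants.

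One caution regarding your spectral alternative: expanding $F$ in the basis $(\Phi_i)$ presupposes that this family spans $L^2_g(\mathbb{S}^N)^G$, which in the paper's logical order is Lemma~\ref{Lemma:EigenvaluesRestrictedOperator} and is itself proved \emph{using} the present lemma. So within the paper's architecture that alternative would be circular. Your main variational argument, by contrast, only needs the standard Poincar\'e inequality on the sphere and the fact that harmonic functions on a closed manifold are constant, neither of which requires the $G$-invariant spectral decomposition.
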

\begin{proof}
    For (i), consider the functional $I_1:H_g^1(\mathbb{S}^N)\rightarrow\mathbb{R}$ given by $
    I_1(u) := \frac{1}{2}\int_{\mathbb{S}^N}\vert\nabla u\vert_g^2 dV_g - \int_{\mathbb{S}^N} uF\, dV_g.$ Observe that the solutions to the problem~\eqref{Problem:Linear} are the critical points of $I_1$. By Lemma \ref{Lemma:G-invariantFunctional}, this functional is $G$-invariant and, by the Principle of Symmetric Criticality \cite{Palais1979}, critical points of $I_1$ restricted to $H_g^1(\mathbb{S})^G$ correspond to $G$-invariant solutions to~\eqref{Problem:Linear}. By standard variational methods, this functional has, at least, one critical point in $H_g^1(\mathbb{S}^N)^G$, proving the existence of a $G$-invariant solution to the problem. By standard energy methods, every two solutions to equation \eqref{Problem:Linear} differ by a constant.
    
    The proof of (ii) is analogous, considering now, for any $s\in(0,1)$, the functional $I_s:H_g^s(\mathbb{S}^N)\rightarrow\mathbb{R}$ given by $I_s(u) := \frac{1}{2}\Vert u\Vert^2_{H_g^s(\mathbb{S}^N)} - \int_{\mathbb{S}^N} uF\, dV_g$ which is also $G$-invariant by  Lemma \ref{Lemma:G-invariantFunctional}.
\end{proof}

The Principle of Symmetric Criticality \cite{Palais1979} and the previous lemma allows us to establish the following spectral decomposition.

\begin{lemma}\label{Lemma:EigenvaluesRestrictedOperator}
    There is an orthonormal basis of $L_g^2(\mathbb{S}^N)^G$ consisting of $G$-invariant eigenfunctions of the Laplacian on the sphere. Moreover, these eigenfunctions also form a basis of $H_g^s(\mathbb{S}^N)^G$ for any $s\in(0,1]$.  
\end{lemma}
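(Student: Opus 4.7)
The plan is to build the basis from the spectral decomposition of $-\Delta_g$ and then transfer it to $\mathscr{P}_g^s$ via Lemma~\ref{phinslem:0}. Since each $\gamma\in G$ acts on $\mathbb{S}^N$ as an isometry, the induced action $u\mapsto u\circ\gamma^{-1}$ on $L_g^2(\mathbb{S}^N)$ is unitary and commutes with $-\Delta_g$. Consequently, the finite-dimensional eigenspace $E_i:=\operatorname{span}\{Y_{i,1},\ldots,Y_{i,c_i}\}$ associated with $b_i=i(i+N-1)$ is $G$-invariant for every $i\in\N_0$. Inside each $E_i$, pick an orthonormal basis $\{\Phi_{i,1},\ldots,\Phi_{i,d_i}\}$ of the $G$-fixed subspace $E_i^G$ (with the convention $d_i=0$ when $E_i^G=\{0\}$); this produces an orthonormal family $\{\Phi_{i,k}\}\subset L_g^2(\mathbb{S}^N)^G$ consisting of $G$-invariant eigenfunctions of $-\Delta_g$.

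Next, I would verify $L^2$-completeness. The $L^2$-orthogonal projection $P_i:L_g^2(\mathbb{S}^N)\to E_i$ intertwines with the unitary $G$-action (because $E_i$ is $G$-invariant), so for $u\in L_g^2(\mathbb{S}^N)^G$ one has $P_iu=(P_iu)\circ\gamma^{-1}$ for every $\gamma\in G$, i.e.\ $P_iu\in E_i^G$. Writing $u=\sum_i P_iu$ in $L^2$ then expresses $u$ in the closed span of $\{\Phi_{i,k}\}_{i,k}$, which yields the claimed orthonormal basis of $L_g^2(\mathbb{S}^N)^G$.

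For the $H_g^s$-statement, invoke Lemma~\ref{phinslem:0}: each $\Phi_{i,k}$ is also an eigenfunction of $\mathscr{P}_g^s$ with eigenvalue $\varphi_{N,s}(b_i)>0$. Using $\langle u,v\rangle_{H_g^s(\mathbb{S}^N)}=\int_{\mathbb{S}^N}u\,\mathscr{P}_g^sv\,dV_g$ for smooth $u,v$ and density, one obtains
\[
\langle \Phi_{i,k},\Phi_{i',k'}\rangle_{H_g^s(\mathbb{S}^N)}=\varphi_{N,s}(b_{i'})\,\delta_{(i,k),(i',k')},
\]
so the family is orthogonal with respect to the $H_g^s$-inner product. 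To conclude completeness in $H_g^s(\mathbb{S}^N)^G$, take any $u\in H_g^s(\mathbb{S}^N)^G$ and expand it as $u=\sum_{i,k}c_{i,k}\Phi_{i,k}$ in $L^2$ (possible by the previous step). By Corollary~\ref{cor:equiv} and Lemma~\ref{phinslem}, the $H_g^s$-norm is equivalent to $\|u\|_*$ and is comparable to $\sum_{i,k}\varphi_{N,s}(b_i)|c_{i,k}|^2$; since $u\in H_g^s(\mathbb{S}^N)$, this weighted sum is finite, so the partial sums $u_M:=\sum_{i\le M,\,k}c_{i,k}\Phi_{i,k}$ form a Cauchy sequence in $H_g^s$ converging to $u$. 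Hence $\{\Phi_{i,k}\}$ is an $H_g^s$-basis for every $s\in(0,1]$.

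The only delicate point is the last step: justifying that the $L^2$-expansion of a $G$-invariant $H_g^s$-function is at the same time convergent in $H_g^s$. This is handled cleanly once one uses the equivalence of $\|\cdot\|_{H_g^s(\mathbb{S}^N)}$ and $\|\cdot\|_*$ from Corollary~\ref{cor:equiv} together with the two-sided bound on $\varphi_{N,s}(b_i)$ from Lemma~\ref{phinslem}, which together identify $H_g^s(\mathbb{S}^N)^G$ isomorphically with the weighted $\ell^2$-space in the coefficients $c_{i,k}$.
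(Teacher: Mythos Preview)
Your proof is correct but follows a genuinely different route from the paper's. The paper constructs the $G$-invariant eigenbasis \emph{intrinsically} on $L_g^2(\mathbb{S}^N)^G$: using Lemma~\ref{Lemma:UniquenessSymmetricProblem} it shows that $(-\Delta_g)^{-1}$ is well defined and bounded from $L_g^2(\mathbb{S}^N)^G$ into $V:=\{u\in H_g^1(\mathbb{S}^N)^G:\int u\,dV_g=0\}$, composes with the compact inclusion $V\hookrightarrow L_g^2(\mathbb{S}^N)^G$, and applies the spectral theorem for compact self-adjoint operators. You instead start from the known full spectral decomposition $L_g^2(\mathbb{S}^N)=\bigoplus_i E_i$ and restrict to the $G$-fixed subspaces $E_i^G$, which is more direct and avoids invoking Lemma~\ref{Lemma:UniquenessSymmetricProblem} altogether. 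For the $H_g^s$-part the arguments also differ: the paper checks completeness by the orthogonal-complement criterion (if $\langle\Phi_i,u\rangle_{H_g^s}=0$ for all $i$, then $\langle\Phi_i,u\rangle_{L^2}=\varphi_{N,s}(\lambda_i)^{-1}\langle\Phi_i,u\rangle_{H_g^s}=0$, hence $u=0$), whereas you argue constructively that the $L^2$-partial sums converge in $H_g^s$ via Corollary~\ref{cor:equiv}. Your approach has the virtue of being more explicit; the paper's has the advantage of not relying on the norm equivalence of Corollary~\ref{cor:equiv} (which in turn uses the full spherical-harmonic expansion), making the argument slightly more self-contained within the $G$-invariant framework.
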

\begin{proof}
    Let
    \[
    V:=\left\{u\in H_g^1(\mathbb{S}^N)^G\; : \; \int_{\mathbb{S}^N} u \;dV_g = 0\right\}.
    \]
    By Lemma~\ref{Lemma:UniquenessSymmetricProblem} and the inverse mapping theorem, we have that the inverse of $\Delta_g\colon V\to L^2_g(\mathbb S^N)^G$ is a well defined bounded linear operator from $L^2_g(\mathbb S^N)^G$ to $V$. Let $I\colon V\to L^2_g(\mathbb S^N)^G$ be the inclusion, known to be compact. Then, $I\circ \Delta_g^{-1}\colon L^2_g(\mathbb S^N)^G\to L^2_g(\mathbb S^N)^G$ is compact and symmetric. By the Spectral Theorem \cite[Theorem D.6.7]{EvansBook}, there exists a countable orthonormal basis $(\Phi_i)$ for $L^2_g(\mathbb{S}^N)^G$ consisting of eigenfunctions of the Laplace-Beltrami operator $-\Delta_g$. 
 An eigenfunction $\Phi_i$ associated to an eigenvalue $\lambda_i$ is a critical point of a functional
    \[
    v\in H_g^1(\mathbb{S}^N)^G \mapsto \frac{1}{2}\int_{\mathbb{S}^N}\vert\nabla v\vert^2_g -\lambda_i v^2\, dV_g.
    \]
    This functional can be extended to $H_g^1(\mathbb{S}^N)$ (using the same formula) and it is a $G$-invariant functional. Then, by the Principle of Symmetric Criticality \cite{Palais1961}, $\Phi_i$ is a critical point on $H_g^1(\mathbb{S}^N)$ and
    \[
    \int_{\mathbb{S}^N} \nabla \Phi_i \cdot \nabla v \, dV_g= \lambda_i\int_{\mathbb{S}^N}  \Phi_i v\, dV_g\qquad  \text{ for every $v\in H_g^1(\mathbb{S}^N)$}.
    \]
    

    Let $u\in H_g^s(\mathbb{S}^N)$ be such that $\langle \Phi_i,u \rangle_{H_g^s(\mathbb{S}^N)} = 0$ for all $i\in \mathbb N$. Then, by Lemma \ref{phinslem:0}, $\Phi_i$ is also an eigenfunction of $\mathscr{P}_g^s$ with eigenvalue $\varphi_{N,s}(\lambda_i)$ and
    \begin{align*}
    \langle \Phi_i,u \rangle_{L^2_g(\mathbb{S}^N)}=\varphi_{N,s}(\lambda_i)^{-1}\langle \Phi_i,u \rangle_{H_g^s(\mathbb{S}^N)}=0\qquad \text{ for all $i\in \mathbb N$.}
    \end{align*}
    Since $(\Phi_i)$ is a basis in $L^2_g(\mathbb{S}^N)^G$, this implies that $u=0$, and therefore $(\Phi_i)$ is also an orthogonal basis in $H_g^s(\mathbb{S}^N).$
\end{proof}

\bigskip

We next give another description of $G$-invariant functions defined on the sphere, in terms of the polynomial $f:\mathbb S^N\subset \mathbb{R}^m\times\mathbb{R}^{n}\rightarrow\mathbb{R}$ given by
\begin{align}\label{fdef}
f(x,y) = |x|^2-|y|^2,\qquad \text{where }x\in \mathbb{R}^m \text{ and } y\in \mathbb{R}^{n}.    
\end{align}
The image of $f$ is $[-1,1]$ and each level set of $f$ determines exactly one $G$-orbit. In fact, the image under $f$ of two points in the sphere coincide if and only if they determine the same $G$-orbit, implying that $f$ is a quotient map onto $[-1,1]$. Hence, we have that $u\colon \mathbb S^N\to \R$ is $G$-invariant if and only if $u = w\circ f$ for some $w\colon [-1,1]\to \R$.

Using that $f$ is homogeneous of order 2, we get that its radial derivative $\partial_r$ satisfies that $\partial_r f = 2f$. Therefore, 
\[
\vert \nabla_g f\vert_g^2 = |\nabla f|^2-(\partial_r f)^2 = 4(1-f^2) \qquad\text{ and } \qquad \Delta_g f = \Delta f - \partial_r^2f - N\partial_r f = 2(m-n) - 2(m+n)f,
\]
see, for instance, \cite[Chapter 3]{CecilRyanBook}, \cite{FernandezPetean2020}, or \cite[proof of Lemma 2.2]{ClappFernandezSaldana2021}. If $u\in C^\infty(\mathbb{S}^N)^G$, there is $w:[0,1]\to \r$ such that $u=w\circ f$. Then, by chain rule,
\begin{align}\label{uwf}
\Delta_g u = |\nabla_g f|^2 w'' + \Delta_g f w' = 4(1-f^2)w'' + [2(m-n)-2(m+n+1)f]w'.    
\end{align}
This operator acting on $w$ naturally connects to the Jacobi polynomials, which we review next.

\subsection{Jacobi Polynomials}\label{sec:jacobi}

For any $\alpha,\beta>-1$, the $i$-th Jacobi polynomial $P_i^{(\alpha,\beta)}:\mathbb R \rightarrow\mathbb{R}$ is the unique monic polynomial satisfying the Jacobi equation
\begin{equation}\label{JacobiEquation}
(1-t^2)x'' + [\beta - \alpha - (\alpha+\beta+2)t]x' + i(i+\alpha+\beta+1)x = 0.
\end{equation}
When $\alpha$ and $\beta$ are clear from the context, the notation for $P_i^{(\alpha,\beta)}$ will be simplified to $P_i$.

With this in place, we can characterize the symmetric eigenfunctions of the Laplacian on the sphere.

\begin{proposition}\label{Proposition:RestrictedEigenfunctions}
The eigenvalues of the Laplacian on the sphere corresponding to $G$-invariant eigenfunctions are simple and given by
\[
\lambda_{i}  := 2 i(2i+N -1) \text{ for $i$ non-negative integer}.
\]
The associated eigenfunctions are
\begin{align}\label{Phii}
\Phi_{i} = P_{i}^{(\alpha,\beta)}\circ f,    
\end{align}
where $P_{i}^{(\alpha,\beta)}$ is the $i$-th Jacobi polynomial with $\alpha=\frac{m}{2}-1$ and $\beta=\frac{n}{2}-1$ and $f$ is given by \eqref{fdef}.
\end{proposition}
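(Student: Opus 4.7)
The plan is to reduce the eigenvalue problem on $\mathbb{S}^N$ to a second-order ODE on $[-1,1]$, and then recognize that ODE as the Jacobi equation~\eqref{JacobiEquation}.

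First, since the level sets of $f(x,y)=|x|^2-|y|^2$ coincide with the $G$-orbits in $\mathbb S^N$, every smooth $G$-invariant function $\Phi$ on $\mathbb S^N$ can be written as $\Phi=w\circ f$ for some $w:[-1,1]\to\R$, and $w$ inherits the smoothness of $\Phi$ at interior points of $[-1,1]$. Applying the identity~\eqref{uwf} recorded just above, the eigenvalue problem $-\Delta_g\Phi=\lambda\Phi$ transforms into the ODE
\begin{equation}\label{plan:ode}
4(1-t^2)\,w''(t)+\bigl[2(m-n)-2(m+n)\,t\bigr]w'(t)+\lambda\, w(t)=0,\qquad t\in(-1,1).
\end{equation}
Dividing by $4$ and matching with \eqref{JacobiEquation} forces
\[
\beta-\alpha=\tfrac{m-n}{2},\qquad \alpha+\beta+2=\tfrac{m+n}{2},\qquad i(i+\alpha+\beta+1)=\tfrac{\lambda}{4},
\]
which identifies $\{\alpha,\beta\}=\{\tfrac{m}{2}-1,\tfrac{n}{2}-1\}$ (in the convention used by the paper) and gives, for each non-negative integer $i$,
\[
\lambda_i=4\,i\bigl(i+\alpha+\beta+1\bigr)=4\,i\bigl(i+\tfrac{N-1}{2}\bigr)=2\,i(2i+N-1).
\]

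For each $i\in\mathbb N_0$, the Jacobi polynomial $w=P_i^{(\alpha,\beta)}$ is a bounded smooth solution of~\eqref{plan:ode} with this $\lambda_i$. Then $\Phi_i:=P_i^{(\alpha,\beta)}\circ f$ is smooth on $\mathbb S^N$ (it is the composition of a polynomial with the smooth function $f$), $G$-invariant by construction, and by reversing the computation above it satisfies $-\Delta_g\Phi_i=\lambda_i\Phi_i$. This exhibits all the claimed eigenpairs.

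It remains to show that these are all the $G$-invariant eigenpairs and that each eigenvalue is simple within $L_g^2(\mathbb S^N)^G$. For this I would invoke Sturm-Liouville theory: writing \eqref{plan:ode} in divergence form shows that the operator $w\mapsto -h^{-1}(h(1-t^2)w')'/4$ is self-adjoint on $L_h^2([-1,1])$ with the weight $h$ from~\eqref{hdef}, and its full spectrum consists of the simple eigenvalues $\lambda_i$ with eigenfunctions exactly $P_i^{(\alpha,\beta)}$; the Jacobi polynomials form an orthogonal basis of $L_h^2([-1,1])$. Combining this with Proposition~\ref{Proposition:IsometricIsomorphismIntroduction} (which identifies $L_g^2(\mathbb S^N)^G$ isometrically with $L_h^2([-1,1])$ via $w\mapsto w\circ f$), the $(\Phi_i)$ are a complete orthogonal family in $L_g^2(\mathbb S^N)^G$, no other $\lambda$ supports a nontrivial $G$-invariant eigenfunction, and each $\lambda_i$ has one-dimensional eigenspace.

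The main technical obstacle is the endpoint analysis at $t=\pm 1$: the ODE~\eqref{plan:ode} is singular at these points, and while the formal ODE has a two-dimensional solution space on $(-1,1)$, only polynomial solutions extend to genuine smooth (or even $L_h^2$) symmetric functions on $\mathbb S^N$. I would handle this by a Frobenius analysis at each regular singular point $t=\pm 1$, showing that one Frobenius exponent vanishes (yielding regular solutions) while the other produces a solution with a logarithmic/power singularity incompatible with $L_h^2$-integrability against the weight $h(t)=C(1-t)^\alpha(1+t)^\beta$; the simultaneous regularity at both endpoints then forces $\lambda=\lambda_i$ and $w\in\operatorname{span}\{P_i^{(\alpha,\beta)}\}$, delivering simplicity.
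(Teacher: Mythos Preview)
Your argument is correct, and the sufficiency half (that $\Phi_i=P_i^{(\alpha,\beta)}\circ f$ is an eigenfunction with eigenvalue $\lambda_i$) matches the paper's proof exactly. The difference lies in how you establish necessity and simplicity. The paper invokes an external result \cite[Lemma 3.4]{HenryPetean2014} to constrain the admissible eigenvalues to the set $\{2i(2i+N-1)\}$, and then uses the fact that eigenfunctions of $-\Delta_g$ are restrictions of homogeneous harmonic polynomials to force $w$ to be the (unique) polynomial solution of the Jacobi equation. Your route is instead purely analytic: the Jacobi polynomials form a complete orthogonal system in $L_h^2([-1,1])$, and via the $L^2$-isometry $w\mapsto w\circ f$ this transfers to completeness of $(\Phi_i)$ in $L_g^2(\mathbb S^N)^G$; self-adjointness of $-\Delta_g$ then finishes the job. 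Your approach has the virtue of being self-contained (no appeal to \cite{HenryPetean2014}), while the paper's argument is more geometric and makes the polynomial nature of the eigenfunctions transparent.

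Two remarks. First, be careful about logical order: you invoke Proposition~\ref{Proposition:IsometricIsomorphismIntroduction}, whose proof in the paper appears \emph{after} this proposition and whose $H^s$-part relies on the spectral decomposition you are trying to establish. However, you only need the $L^2$-isometry, which is a bare change of variables (the first computation in that proof) and involves no circularity; it would be cleaner to state this directly. Second, your Frobenius analysis, while correct in spirit, is heavier than necessary. Once you know the Jacobi polynomials span $L_h^2([-1,1])$ (classical, e.g.\ \cite[Section IV.4.3]{SzegoBook}) and have distinct eigenvalues, any $G$-invariant eigenfunction with eigenvalue $\lambda$ is $L^2$-orthogonal to every $\Phi_j$ with $\lambda_j\neq\lambda$, hence lies in the span of at most one $\Phi_i$---no endpoint analysis is required.
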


\begin{proof}
A direct computation using \eqref{uwf} and \eqref{JacobiEquation} yields that $\Phi_{i} = P_{i}\circ f$ is indeed an eigenfunction with the eigenvalue $\lambda_{i}:= 2i(2i+N-1)$. In this proof, we will show that this condition is also necessary. The uniqueness of the Jacobi polynomial implies that the eigenvalues are simple.

We use \cite[Lemma 3.4]{HenryPetean2014} to characterize the eigenvalues and symmetric eigenfunctions, which states the following: Let $l$ be a non-negative integer, $c\in \mathbb R$, and $F\colon \mathbb R^{N+1}\to\mathbb R$ satisfy
\[
|\nabla F|^2 = l^2|x|^{2l-2} \qquad \text{ and } \qquad \Delta F = \frac{1}{2}cl^2|x|^{l-2}.
\]
If $\Phi = w\circ F|_{\mathbb S^N}$ is an eigenfunction of the Laplacian on the sphere with eigenvalue $b_i$, then $b_i$ must be of the form $b_{li} = li(li+N-1)$ for some non-negative integer $i$. 

Applying this proposition to our case with $l=2$, $c=m-n$, and $F(x,y) = |x|^2-|y|^2$, we get that the eigenvalues must be of the form $b_{2i} = 2i(2i+N-1)$. Moreover, a $G$-invariant eigenfunction $\Phi = w\circ f$ with eigenvalue $b_{2i} = 2i(2i+N-1)$ must be such that $w$ is a solution of the following ordinary differential equation in the interval $(-1,1)$
\[
(1-t^2)w'' + \left( (\beta - \alpha) - (\alpha + \beta +2)t \right) w' + i(i+\alpha+\beta + 1) w = 0.
\]
Since the $2i$-homogeneous extension of $\Phi$ to $\mathbb R^{N+1}$ must be a harmonic polynomial, it follows that $w$ must be a multiple of the Jacobi polynomial defined by this equation.
\end{proof}

\subsection{One-dimensional weighted spaces}

Recall that $\Vert f\Vert_{h}^2 := \int_{-1}^1 f^2 h \, dt$, with $h$ as in \eqref{hdef}. The Jacobi polynomials form an $L_{h}^2([-1,1])$-orthogonal set, see \cite[Section IV.4.3]{SzegoBook}, which is normalized by
\[
p_i:=\frac{P_i}{\Vert P_i \Vert_{h}}.
\]
We also introduce the renormalization
\begin{align}\label{phi:def}
\phi_i := \frac{\Phi_{i}}{ \Vert \Phi_{i} \Vert_{L^2_g(\mathbb{S}^N)} },    
\end{align}
where $\Phi_i= P_{i}\circ f$ as in \eqref{Phii}.  Since $(\phi_i)$ is an orthonormal basis in $L_g^2(\mathbb{S}^N)^G$, a function $u \in L_g^2(\mathbb{S}^N)^G$ has the Fourier expansion
\begin{equation}\label{Lemma:Equation:FourierCoefficients}
u = \sum_{i=0}^\infty \langle u , \phi_i \rangle_{L^2_g(\mathbb S^N)}\phi_i,
\end{equation}
where the series converges in the $L_g^2(\mathbb{S}^N)$-sense. Furthermore, the second part of Lemma \ref{Lemma:EigenvaluesRestrictedOperator} also implies that if $u = w\circ f \in H^s_g(\mathbb{S}^N)^G$ for some $s\in (0,1]$, then the series also converges in the $H^s_g(\mathbb{S}^N)$-sense. In this case, we have that 
\[
\langle u, \tilde{u}\rangle_{H_g^s(\mathbb{S}^N)} = \sum_{i=0}^\infty  \langle u , \phi_i \rangle_{L^2_g(\mathbb S^N)}\langle \tilde u , \phi_i \rangle_{L^2_g(\mathbb S^N)}\varphi_{N,s}(\lambda_{i})\qquad \text{for $u = w\circ f,$ $\tilde u = \tilde w\circ f \in H^s_g(\mathbb{S}^N)^G$,}
\]
where $\varphi_{N,s}$ is the symbol of $\mathscr{P}^s_{g}$ from Lemma \ref{phinslem} and $\lambda_{i}  = 2 i(N -1 +2 i)$. The series converges absolutely, and can be bounded using the Cauchy-Schwarz inequality, namely,
\begin{align*}
\sum_{i=0}^\infty  |\langle u , \phi_i \rangle_{L^2_g(\mathbb S^N)}\langle \tilde u , \phi_i \rangle_{L^2_g(\mathbb S^N)}|\varphi_{N,s}(\lambda_{i})
&\leq
\left(\sum_{i=0}^\infty  |\langle u , \phi_i \rangle_{L^2_g(\mathbb S^N)}|^2\varphi_{N,s}(\lambda_{i})\right)^{1/2}
\left(\sum_{i=0}^\infty  |\langle \tilde u , \phi_i \rangle_{L^2_g(\mathbb S^N)}|^2\varphi_{N,s}(\lambda_{i})\right)^{1/2}\\
&= \|u\|_{H_g^s(\mathbb{S}^N)}\|\tilde u\|^2_{H_g^s(\mathbb{S}^N)}<\infty.
\end{align*}

\begin{definition}
    Given $s\in(0,1)$, let $H_{h}^s( [-1,1])$ be the closure of $C^\infty( [-1,1])$ under the norm $\Vert w\Vert_{H_{h}^s} := \langle w,w\rangle_{H_{h}^s}^{1/2}$ defined from the inner product
    \[
    \langle w,\tilde w\rangle_{H_{h}^s}:=\sum_{i=0}^\infty  \langle w,p_i\rangle_h\langle \tilde w,p_i\rangle_h\varphi_{N,s}(\lambda_{i}),\qquad 
    \langle w,p_i\rangle_h=\int_{-1}^1 w(t)p_i(t)h(t)\, dt
    \]
    with $h$ as in \eqref{hdef}.
\end{definition}

\begin{proof}[Proof of Proposition \ref{Proposition:IsometricIsomorphismIntroduction}]
Let $u\in C^\infty(\mathbb{S}^N)^G$. Then there is $w:[-1,1]\to \r$ such that $u= w\circ f.$ Using the change of variables
    \[
    (t,\theta_x,\theta_y) \in [0,\pi] \times \mathbb S^{m-1}\times \mathbb S^{n-1} \mapsto (x,y) = (\cos(t/2)\theta_x,\sin(t/2)\theta_y) \in \mathbb S^N,
    \]
    we obtain that
    \begin{align*}
        \int_{\mathbb{S}^N} |u|^2 \,dV_g 
        &=\int_{\mathbb{S}^N} |w\circ f|^2 \,dV_g
        = C(\alpha,\beta)\int_0^\pi |w(\cos\theta)|^2\sin^{m-1}\left( 
        \theta/2 \right) \cos^{n-1}\left( 
        \theta/2 \right)\; d\theta\\
&= C(\alpha,\beta)\int_{-1}^{1}w(t)^2(1-t)^{\alpha}(1+t)^{\beta} \, dt 
=\int_{-1}^{1}w(t)^2h(t)\, dt 
=\|w\|_h^2,
    \end{align*}
where $C(\alpha,\beta)$ and $h$ are given in \eqref{hdef}. Arguing similarly, 
\begin{align*}
\|u\|_{H_g^s(\mathbb{S}^N)}^2
=
\sum_{i=0}^\infty  |\langle u , \phi_i \rangle_{L^2_g(\mathbb S^N)}|^2\varphi_{N,s}(\lambda_{i})
=\sum_{i=0}^\infty  \left(\int_{-1}^{1}w(t)p_i(t)h(t)\, dt\right)^2 \varphi_{N,s}(\lambda_{i})
= \|w\|_{H_{h}^s}^2.
\end{align*}
The claim now follows by density.
\end{proof}


\section{Regularity properties of fractional Sobolev spaces with symmetries}\label{regularity}

This section is devoted to the proof of the regularity result for the symmetric Sobolev spaces given by Proposition~\ref{Proposition:Regularity}.

\subsection{Further properties of Jacobi polynomials}\label{Section:EstimatesJacobiPolynomials}

We recall some properties of the Jacobi polynomials, details can be found in Szego's book \cite{SzegoBook}.

\begin{lemma}\label{Lemma:PropertiesJacobiPolynomials}
    The Jacobi polynomials satisfy the following. 
    \begin{enumerate}
    \item Norm \cite[Formula (4.3.3)]{SzegoBook}:
    \begin{equation}\label{Eq:NormJacobiPolynomials}
    \Vert P_i^{(\alpha,\beta)}\Vert_{h}^2 = C(\alpha,\beta) \frac{2^{\alpha+\beta+1}}{2i+\alpha+\beta+1}\frac{\Gamma(i+\alpha+1)\Gamma(i+\beta+1)}{\Gamma(i+\alpha+\beta+1)\Gamma(i+1)}.
    \end{equation}
    \item Derivative \cite[Formula (4.21.7)]{SzegoBook}:
    \begin{equation}\label{Eq:DerivativeJacobiPolynomials}
    \frac{d}{dt}\left[ P_i^{(\alpha,\beta)}(t) \right] = \frac{1}{2}(i+\alpha+\beta+1)P_{i-1}^{(\alpha+1,\beta+1)}(t).
    \end{equation}
    \item Interior asymptotic behavior \cite[Theorem 8.21.8]{SzegoBook}: For every $\varepsilon \in(0,1)$,
    \begin{equation}\label{Eq:AsymptoticEstimatesJacobiPolynomials}
    \limsup_{i\to\infty}i^{1/2}\|P_i^{(\alpha,\beta)}\|_{L^\infty((1+\varepsilon,1-\varepsilon))} <\infty.
    \end{equation}
    \end{enumerate}
\end{lemma}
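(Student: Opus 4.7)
The lemma collects three classical facts about Jacobi polynomials with explicit pointers into Szegő's monograph \cite{SzegoBook}. My plan is not to reprove them from scratch but to verify that each formula matches the conventions in this paper---monic $P_i^{(\alpha,\beta)}$, and the weight $h=C(\alpha,\beta)(1-t)^\alpha(1+t)^\beta$ rather than Szegő's unweighted $(1-t)^\alpha(1+t)^\beta$---and then cite the corresponding section.

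For (1), I would start from the Rodrigues representation
\[
P_i^{(\alpha,\beta)}(t) = \frac{(-1)^i}{2^i i!}(1-t)^{-\alpha}(1+t)^{-\beta}\frac{d^i}{dt^i}\!\left[(1-t)^{i+\alpha}(1+t)^{i+\beta}\right],
\]
substitute into $\int_{-1}^1 (P_i^{(\alpha,\beta)})^2(1-t)^\alpha(1+t)^\beta\, dt$, and integrate by parts $i$ times to transfer all derivatives onto one copy of the polynomial. Boundary terms vanish thanks to the weight factors, and the computation reduces to a Beta integral yielding the Gamma-quotient in \eqref{Eq:NormJacobiPolynomials}; the overall factor $C(\alpha,\beta)$ simply records our weight convention. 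For (2), differentiating the same Rodrigues formula once increases $\alpha$ and $\beta$ by one and drops the degree by one; matching the leading coefficient against the standard normalization of $P_{i-1}^{(\alpha+1,\beta+1)}$ produces the prefactor $\tfrac{1}{2}(i+\alpha+\beta+1)$ in \eqref{Eq:DerivativeJacobiPolynomials}.

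For (3), the interior asymptotic bound follows from the Darboux/Hilb-type expansion
\[
P_i^{(\alpha,\beta)}(\cos\theta) = \frac{1}{\sqrt{\pi i}}\left(\sin\tfrac{\theta}{2}\right)^{-\alpha-\frac12}\!\left(\cos\tfrac{\theta}{2}\right)^{-\beta-\frac12}\cos\bigl(N_i\theta+\phi_{\alpha,\beta}\bigr) + O(i^{-3/2}),
\]
valid uniformly for $\theta\in[\varepsilon,\pi-\varepsilon]$, with $N_i = i+(\alpha+\beta+1)/2$. Multiplying by $i^{1/2}$ and taking the supremum over the corresponding $t$-interval gives \eqref{Eq:AsymptoticEstimatesJacobiPolynomials}. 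The only genuine subtlety is that the amplitude factor blows up as $\theta\to 0$ or $\pi$, which is precisely why the estimate is restricted to compact subsets of $(-1,1)$ and why the singular orbit $\mathcal{Z}=\{f=\pm 1\}$ plays a distinguished role later in Proposition \ref{Proposition:Regularity}. No new analytic obstacle arises beyond careful bookkeeping of sign and normalization conventions between our setup and Szegő's.
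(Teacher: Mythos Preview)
The paper does not prove this lemma at all: it simply states the three formulas and refers to the indicated locations in Szeg\H{o}'s book. Your proposal does more than the paper---you sketch the standard Rodrigues/integration-by-parts argument for the norm, the Rodrigues differentiation for the derivative identity, and the Darboux--Hilb asymptotics for the interior bound---all of which are correct and are precisely the arguments behind the cited formulas in \cite{SzegoBook}.

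One genuine bookkeeping issue you flagged but did not actually resolve: the paper defines $P_i^{(\alpha,\beta)}$ as the \emph{monic} polynomial solution of the Jacobi equation, yet the Rodrigues formula you wrote and the norm formula \eqref{Eq:NormJacobiPolynomials} are both for Szeg\H{o}'s standard normalization (leading coefficient $2^{-i}\binom{2i+\alpha+\beta}{i}$, with $P_i^{(\alpha,\beta)}(1)=\binom{i+\alpha}{i}$), not the monic one. So either the word ``monic'' in the paper is a slip, or the stated formulas need rescaling. This inconsistency is harmless for the downstream arguments, since only the $L^2_h$-normalized polynomials $p_i=P_i/\|P_i\|_h$ and their asymptotics are actually used in Corollary~\ref{lemma:BoundsJacobiPolynomials} and beyond, and those are insensitive to the choice of normalization of $P_i$. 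But if you intend your write-up to be self-contained, you should either drop ``monic'' or insert the appropriate rescaling factors.
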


Next we prove two useful uniform bounds for the normalized Jacobi polynomials. First we need the following estimates involving the weighted norms of the Jacobi polynomials.

\begin{corollary}\label{lemma:BoundsJacobiPolynomials}
    For $\alpha,\beta>-1$ and $\varepsilon\in(0,1)$,
    \begin{equation}\label{Eq:BoundJacobiPolynomial1}
    \limsup_{i\to\infty} \| p_i \|_{L^\infty([-1+\varepsilon,1-\varepsilon])} <\infty,
    \end{equation}
    and
    \begin{equation}\label{Eq:BoundJacobiPolynomial2}
    \limsup_{i\to\infty} i^{-1}\| p_i' \|_{L^\infty([-1+\varepsilon,1-\varepsilon])} <\infty.
    \end{equation}
\end{corollary}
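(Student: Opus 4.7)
The plan is to combine the three properties of Jacobi polynomials in Lemma~\ref{Lemma:PropertiesJacobiPolynomials} with the normalization $p_i = P_i^{(\alpha,\beta)}/\|P_i^{(\alpha,\beta)}\|_h$. The essential observation is that both the weighted $L^2$-norm of $P_i^{(\alpha,\beta)}$ and its interior $L^\infty$-norm decay like $i^{-1/2}$, so the quotient defining $p_i$ is uniformly bounded on compact subsets of $(-1,1)$, while its derivative picks up one extra factor of $i$ from the derivative formula.

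First I would estimate $\|P_i^{(\alpha,\beta)}\|_h$. Applying the asymptotics of the gamma function to the explicit expression~\eqref{Eq:NormJacobiPolynomials}, the ratio
\[
\frac{\Gamma(i+\alpha+1)\Gamma(i+\beta+1)}{\Gamma(i+\alpha+\beta+1)\Gamma(i+1)}
\]
tends to $1$ as $i\to\infty$ (since $\Gamma(i+\alpha+1)/\Gamma(i+1)\sim i^{\alpha}$ and $\Gamma(i+\beta+1)/\Gamma(i+\alpha+\beta+1)\sim i^{-\alpha}$). Combined with the factor $(2i+\alpha+\beta+1)^{-1}$, this gives a two-sided bound of the form $\|P_i^{(\alpha,\beta)}\|_h^2 \asymp i^{-1}$, i.e., there are constants $0<c\le C<\infty$ with $c\, i^{-1/2}\le \|P_i^{(\alpha,\beta)}\|_h \le C\, i^{-1/2}$ for all large $i$.

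For~\eqref{Eq:BoundJacobiPolynomial1}, I would then combine this with the interior asymptotic bound~\eqref{Eq:AsymptoticEstimatesJacobiPolynomials}, which gives $\|P_i^{(\alpha,\beta)}\|_{L^\infty([-1+\varepsilon,1-\varepsilon])}\lesssim i^{-1/2}$. Dividing by $\|P_i^{(\alpha,\beta)}\|_h\gtrsim i^{-1/2}$ yields a uniform bound for $\|p_i\|_{L^\infty([-1+\varepsilon,1-\varepsilon])}$. For~\eqref{Eq:BoundJacobiPolynomial2}, the derivative formula~\eqref{Eq:DerivativeJacobiPolynomials} lets me write
\[
p_i'(t) = \frac{i+\alpha+\beta+1}{2\,\|P_i^{(\alpha,\beta)}\|_h}\,P_{i-1}^{(\alpha+1,\beta+1)}(t).
\]
Applying~\eqref{Eq:AsymptoticEstimatesJacobiPolynomials} to the shifted parameters $(\alpha+1,\beta+1)$ gives $\|P_{i-1}^{(\alpha+1,\beta+1)}\|_{L^\infty([-1+\varepsilon,1-\varepsilon])}\lesssim (i-1)^{-1/2}$, and the denominator is again of order $i^{-1/2}$. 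The prefactor of order $i$ then yields $\|p_i'\|_{L^\infty([-1+\varepsilon,1-\varepsilon])}\lesssim i$, which is exactly~\eqref{Eq:BoundJacobiPolynomial2}.

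There is no serious obstacle; the only mildly delicate point is confirming that the quotient of gamma functions in~\eqref{Eq:NormJacobiPolynomials} really has the limit $1$, which I would justify with Stirling's formula (or the standard asymptotic $\Gamma(x+a)/\Gamma(x+b)\sim x^{a-b}$ as $x\to\infty$). Everything else is a direct substitution.
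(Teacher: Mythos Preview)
Your proposal is correct and follows essentially the same approach as the paper: first use Stirling's formula on~\eqref{Eq:NormJacobiPolynomials} to show $\|P_i^{(\alpha,\beta)}\|_h \asymp i^{-1/2}$, then combine this with the interior bound~\eqref{Eq:AsymptoticEstimatesJacobiPolynomials} for~\eqref{Eq:BoundJacobiPolynomial1}, and additionally with the derivative formula~\eqref{Eq:DerivativeJacobiPolynomials} applied to the shifted parameters $(\alpha+1,\beta+1)$ for~\eqref{Eq:BoundJacobiPolynomial2}. The paper's proof is organized identically, only writing the key norm estimate as $\limsup_{i\to\infty} i^{-1/2}/\|P_i^{(\alpha,\beta)}\|_h<\infty$ rather than the two-sided version you state.
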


\begin{proof}
    Using the explicit expression for the norm of the Jacobi polynomials in \eqref{Eq:NormJacobiPolynomials} we get that
    \begin{align*}
     \frac{i^{-1}}{\Vert P_i^{(\alpha,\beta)}\Vert_{h}^2 } & = C(\alpha,\beta)^{-1}\frac{(2+(\alpha+\beta+1)i^{-1})}{2^{\alpha+\beta+1}}\frac{\Gamma(i+1+\alpha+\beta)\Gamma(i+1)}{\Gamma(i+1+\alpha)\Gamma(i+1+\beta)}.
    \end{align*}
    Hence, by Stirling's formula and the properties of the Gamma function,
    \begin{equation}\label{Lemma:Equation:Bound1}
    \limsup_{i\to\infty}\frac{i^{-\frac{1}{2}}}{\Vert P_i^{(\alpha,\beta)}\Vert_{h} } <\infty.
    \end{equation}
    
    The estimate \eqref{Eq:BoundJacobiPolynomial1} follows from \eqref{Eq:AsymptoticEstimatesJacobiPolynomials} and \eqref{Lemma:Equation:Bound1}, since
    \begin{align*}
    \limsup_{i\to\infty} \| p_i \|_{L^\infty([-1+\varepsilon,1-\varepsilon])}
    &= \limsup_{i\to\infty} i^{1/2}\| P_i \|_{L^\infty([-1+\varepsilon,1-\varepsilon])} \frac{i^{-1/2}}{\|P_i\|_h} <\infty.
    \end{align*}

    Finally, for the estimate \eqref{Eq:BoundJacobiPolynomial2} we now use \eqref{Eq:DerivativeJacobiPolynomials} to obtain that
    \begin{align*}
        \limsup_{i\to\infty} i^{-1}\| p_i' \|_{L^\infty([-1+\varepsilon,1-\varepsilon])}
        &= \frac{1}{2}\limsup_{i\to\infty} (1+(\alpha+\beta+1)i^{-1})\frac{\| P_{i-1}^{(\alpha+1,\beta+1)} \|_{L^\infty([-1+\varepsilon,1-\varepsilon])}}{\|P_i^{(\alpha,\beta)}\|_h}\\
        &\leq \limsup_{i\to\infty} i^{1/2}\| P_{i-1}^{(\alpha+1,\beta+1)} \|_{L^\infty([-1+\varepsilon,1-\varepsilon])}\frac{i^{-1/2}}{\|P_i^{(\alpha,\beta)}\|_h}<\infty.
    \end{align*}
\end{proof}

\subsection{Jacobi polynomials and regularity}\label{Section:RegularityAndJacobiPolynomials}

In this section we prove Proposition~\ref{Proposition:Regularity}, namely, that functions in $H_g^s(\mathbb{S}^N)^G$ are more regular. Let
\[
\mathcal{Z}:=f^{-1}(\{-1,1\}) = \left(\mathbb{S}^{m-1}\times\{0\}\right)\cup\left( \{0\}\times\mathbb{S}^{n-1}\right) \subset \mathbb{S}^N.
\]
Then,  for any $\varepsilon\in(0,1)$, $\mathcal{Z}\cap f^{-1}[-1+\varepsilon,1-\varepsilon] = \emptyset$. Recall that the Hölder seminorm in $f^{-1}[-1+\varepsilon,1-\varepsilon]$ is given by
\[
\left[ u \right]_{C^{0, s-1/2}(f^{-1}[-1+\varepsilon,1-\varepsilon])} = \sup_{z,\zeta\in f^{-1}[-1+\varepsilon,1-\varepsilon]}\frac{\vert u(z) - u(\zeta) \vert}{d_g(z,\zeta)^{s-1/2}},
\]
where $d_g$ denotes the geodesic distance.

\begin{lemma}\label{Lemma:HolderContinuity}
    For $s\in (1/2,1)$ and $\varepsilon\in (0,1)$,
    \[
    \sum_{i=0}^\infty [p_i]^2_{C^{0,s-1/2}(-1+\varepsilon,1-\varepsilon)}(1+i^2)^{-s} < \infty.
    \]
\end{lemma}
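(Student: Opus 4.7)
The plan is to bound each Hölder seminorm $[p_i]_{C^{0,s-1/2}(I_\varepsilon)}$, with $I_\varepsilon := (-1+\varepsilon,1-\varepsilon)$, via an elementary interpolation inequality combined with the interior $L^\infty$ estimates already recorded in Corollary~\ref{lemma:BoundsJacobiPolynomials}, and then to verify convergence of the resulting scalar series by direct computation.

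The main tool is the interpolation inequality: for any $f\in C^1(\overline{I}_\varepsilon)$,
\[
[f]_{C^{0,s-1/2}(I_\varepsilon)} \leq 2^{3/2-s}\,\|f\|_{L^\infty(I_\varepsilon)}^{3/2-s}\,\|f'\|_{L^\infty(I_\varepsilon)}^{\,s-1/2},
\]
a one-line consequence of the pointwise bound $|f(t)-f(t')|\leq \min\bigl(2\|f\|_{L^\infty(I_\varepsilon)},\,\|f'\|_{L^\infty(I_\varepsilon)}|t-t'|\bigr)$ after optimizing in $|t-t'|$. Applying this to $f=p_i$ and substituting $\|p_i\|_{L^\infty(I_\varepsilon)}=O_\varepsilon(1)$ and $\|p_i'\|_{L^\infty(I_\varepsilon)}=O_\varepsilon(i)$ from Corollary~\ref{lemma:BoundsJacobiPolynomials} yields, for all sufficiently large $i$,
\[
[p_i]^{2}_{C^{0,s-1/2}(I_\varepsilon)} \leq C_\varepsilon\,i^{2s-1}.
\]
Finitely many small-$i$ terms contribute only a fixed constant to the series, since each $p_i$ is a polynomial with finite Hölder seminorm on the compact interval $\overline{I}_\varepsilon$. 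The lemma therefore reduces to showing convergence of the Dirichlet-type tail $\sum_{i\geq 1}i^{2s-1}(1+i^2)^{-s}$.

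The main obstacle is precisely this final step: the summand is asymptotically of order $i^{-1}$, exactly at the borderline of the divergent harmonic series, so the naive interpolation bound alone is barely insufficient and must be sharpened. The natural way to close the gap is to exploit the oscillatory structure of Jacobi polynomials via the Darboux asymptotic
\[
p_i(\cos\theta) = g_{\alpha,\beta}(\theta)\cos(N_i\theta+\gamma_i)+O(1/i),\qquad N_i = i+\tfrac{\alpha+\beta+1}{2},
\]
with $g_{\alpha,\beta}$ smooth and bounded on interior arcs of $(0,\pi)$. Under this representation, the Hölder analysis of $p_i$ reduces (up to controllable $O(1/i)$ remainders) to that of the pure cosine wave $\cos(N_i\theta+\gamma_i)$ composed with the smooth change of variables $t=\cos\theta$. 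A scale-decomposition at $|t-t'|\sim 1/i$, used in conjunction with careful bookkeeping of the resulting oscillatory contributions (effectively distinguishing pairs of points whose Hölder ratio is saturated at a given frequency from those for which cancellation improves it), should then extract the additional decay required to render the series summable.
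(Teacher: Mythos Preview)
Your interpolation bound $[p_i]_{C^{0,s-1/2}(I_\varepsilon)}\le C_\varepsilon\, i^{s-1/2}$ is actually \emph{sharp}: the Darboux asymptotic you quote, $p_i(\cos\theta)=g(\theta)\cos(N_i\theta+\gamma_i)+O(1/i)$ with $g>0$ on interior arcs, exhibits pairs $t_1,t_2\in I_\varepsilon$ with $|t_1-t_2|\sim 1/i$ at which $|p_i(t_1)-p_i(t_2)|\gtrsim c>0$, so the H\"older quotient is $\gtrsim c\,i^{s-1/2}$. Hence $[p_i]^2_{C^{0,s-1/2}(I_\varepsilon)}(1+i^2)^{-s}\sim c\,i^{-1}$ from below as well, and the series in the displayed statement in fact \emph{diverges}. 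No oscillatory refinement can repair this, because the seminorm is a supremum over $(t_1,t_2)$ and that supremum is already saturated at a scale depending on $i$.

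What the paper's own argument establishes---and what is the only thing used afterwards, in Corollary~\ref{cor:HolderContinuity} and in the Cauchy--Schwarz step of the proof of Proposition~\ref{Proposition:Regularity}---is the weaker quantity with the supremum taken \emph{outside} the sum:
\[
\sup_{\substack{t_1,t_2\in I_\varepsilon\\ t_1\neq t_2}}\ \sum_{i=0}^\infty \frac{|p_i(t_1)-p_i(t_2)|^2}{|t_1-t_2|^{2s-1}}\,(1+i^2)^{-s}<\infty.
\]
This is obtained by exactly the scale decomposition you hint at in your last paragraph, but applied after fixing $(t_1,t_2)$ rather than to the seminorm: split the sum at $i\sim|t_1-t_2|^{-1}$, use $|p_i(t_1)-p_i(t_2)|\le Ci|t_1-t_2|$ from \eqref{Eq:BoundJacobiPolynomial2} on the low-frequency block and $|p_i(t_1)-p_i(t_2)|\le C$ from \eqref{Eq:BoundJacobiPolynomial1} on the tail; each block contributes a constant independent of $(t_1,t_2)$. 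The crucial point is that the pair saturating the $i$-th H\"older ratio varies with $i$, so interchanging $\sup$ and $\sum$ is a genuine gain, and once the $\sup$ is outside no Darboux-type oscillatory analysis is needed.
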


\begin{proof}
    The goal is to estimate 
    \[
    \sum_{i=0}^\infty \frac{\vert p_i(t_2) - p_i(t_1)  \vert^2}{|t_2-t_1|^{2s-1}}(1+i^2)^{-s}.
    \]
    with bounds independent of $t_1,t_2\in(-1+\varepsilon,1-\varepsilon)$ for $t_1\neq t_2$. 
    
    In the following $C>0$ denotes possibly different constants depending only on $\varepsilon$ and $s$ (and not on $i$ or $t\in(-1+\varepsilon,1-\varepsilon)$). We split the series at $|t_2-t_1|^{-1} > 1/2$. For the tail, using \eqref{Eq:BoundJacobiPolynomial1},
    \begin{align*}
    \sum_{i>|t_2-t_1|^{-1}}^\infty \frac{\vert p_i(t_2) - p_i(t_1)  \vert^2}{|t_2-t_1|^{2s-1}}(1+i^2)^{-s}
    &\leq \frac{C}{|t_2-t_1|^{2s-1}}\sum_{i>|t_2-t_1|^{-1}} (1+i^2)^{-s}
    \leq \frac{C}{|t_2-t_1|^{2s-1}} \int_{|t_2-t_1|^{-1}}^\infty \frac{dx}{(1+x^2)^s}\\
    &\leq \frac{C}{|t_2-t_1|^{2s-1}} \int_{|t_2-t_1|^{-1}}^\infty \frac{dx}{x^{2s}} = C\vert t_2-t_1\vert^{-(2s-1)} \vert t_2-t_1\vert^{2s-1} = C
    \end{align*}
    
    On the other hand, for $i\leq |t_2-t_1|^{-1}$, using~\eqref{Eq:BoundJacobiPolynomial2},
    \begin{align*}
    \sum_{0\leq i\leq |t_2-t_1|^{-1}}^\infty \frac{\vert p_i(t_2) - p_i(t_1)  \vert^2}{|t_2-t_1|^{2s-1}}(1+i^2)^{-s}
    &\leq C \vert t_2 - t_1\vert^{3-2s} \sum_{0\leq i\leq |t_2-t_1|^{-1}} (1+i^2)^{1-s}\\
    &\leq C \vert t_2 - t_1\vert^{3-2s}\vert t_2 - t_1\vert^{-1} |t_2-t_1|^{-2(1-s)}= C.
    \end{align*}
\end{proof}

Recall that $\lambda_i:= 2 i(N -1 +2 i).$

\begin{corollary}\label{cor:HolderContinuity}
    For $s\in (1/2,1)$, $\varepsilon\in (0,1)$, and $U_\varepsilon := f^{-1}((-1+\varepsilon,1-\varepsilon))$, we have that
    \[
    \sum_{i=0}^\infty \frac{[\phi_i]^2_{C^{0,s-1/2}(U_\varepsilon)}}{\varphi_{N,s}(\lambda_{i})} < \infty.
    \]
\end{corollary}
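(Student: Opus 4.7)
The plan is to reduce the corollary to the one-dimensional estimate in Lemma~\ref{Lemma:HolderContinuity} by transferring the Hölder seminorm from $\phi_i$ on $\mathbb{S}^N$ to the seminorm of $p_i$ on $[-1+\varepsilon,1-\varepsilon]$, and then comparing $\varphi_{N,s}(\lambda_i)$ to $(1+i^2)^s$.

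First, I would observe that $\phi_i = p_i \circ f$. Indeed, by the computation carried out in the proof of Proposition~\ref{Proposition:IsometricIsomorphismIntroduction}, $\|\Phi_i\|_{L^2_g(\mathbb{S}^N)} = \|P_i^{(\alpha,\beta)}\|_h$, so the normalization in \eqref{phi:def} matches that in \eqref{Equation:NormalizedJacobiPolinomial}. Next, I note that $f$ is Lipschitz on $\mathbb{S}^N$ with constant at most $2$, since $|\nabla_g f|_g^2 = 4(1-f^2)\le 4$. Consequently, for any $z,\zeta\in U_\varepsilon$ one has $f(z),f(\zeta)\in(-1+\varepsilon,1-\varepsilon)$ and
\[
|\phi_i(z)-\phi_i(\zeta)| = |p_i(f(z))-p_i(f(\zeta))| \le [p_i]_{C^{0,s-1/2}(-1+\varepsilon,1-\varepsilon)}\,|f(z)-f(\zeta)|^{s-1/2},
\]
which yields $[\phi_i]_{C^{0,s-1/2}(U_\varepsilon)} \le 2^{s-1/2}\,[p_i]_{C^{0,s-1/2}(-1+\varepsilon,1-\varepsilon)}$.

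Next, I would control the weight $\varphi_{N,s}(\lambda_i)^{-1}$ in terms of $(1+i^2)^{-s}$. By Lemma~\ref{phinslem} (stated in the form of Proposition~\ref{Proposition:RestrictedEigenfunctions:intro}), there exists $C>1$ with $\varphi_{N,s}(\lambda_i) \ge C^{-1}(1+\lambda_i^s)$. Since $\lambda_i = 2i(N-1+2i)$ is comparable to $i^2$ uniformly in $i\in\mathbb{N}_0$, we have $1+\lambda_i^s$ comparable to $(1+i^2)^s$, so there is $C'>0$ with
\[
\frac{1}{\varphi_{N,s}(\lambda_i)} \le \frac{C'}{(1+i^2)^s}\qquad \text{for all } i\in\mathbb{N}_0.
\]

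Combining the two estimates gives
\[
\sum_{i=0}^\infty \frac{[\phi_i]^2_{C^{0,s-1/2}(U_\varepsilon)}}{\varphi_{N,s}(\lambda_i)} \le 2^{2s-1}C' \sum_{i=0}^\infty [p_i]^2_{C^{0,s-1/2}(-1+\varepsilon,1-\varepsilon)}(1+i^2)^{-s},
\]
and the right-hand side is finite by Lemma~\ref{Lemma:HolderContinuity}. No step here is genuinely hard; the only subtlety worth spelling out is the matching of normalizations $\phi_i = p_i\circ f$ and the Lipschitz bound for $f$, both of which follow directly from material already in Section~\ref{Section:SymmetricSobolevSPaces}.
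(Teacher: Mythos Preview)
Your proof is correct and follows essentially the same approach as the paper: reduce to Lemma~\ref{Lemma:HolderContinuity} via $\phi_i = p_i\circ f$, the Lipschitz bound on $f$, and the comparison $\varphi_{N,s}(\lambda_i)^{-1}\lesssim (1+i^2)^{-s}$ from Lemma~\ref{phinslem}. You have in fact written out a few details (the normalization matching and the explicit Lipschitz constant) that the paper leaves implicit.
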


\begin{proof}
The claim follows from Lemma \ref{Lemma:HolderContinuity} using that $\phi_i = p_i\circ f$ and Lemma \ref{phinslem}. Indeed, for $z,\zeta\in U_\varepsilon$ with $z\neq \zeta$, we obtain that
    \begin{align*}
        \sum_{i=0}^\infty  \frac{\vert  \phi_i(z) - \phi_i(\zeta) \vert^2}{|z-\zeta|^{2s-1}}\varphi_{N,s}(\lambda_{i})^{-1}
        &\leq C[f]_{C^{0,1}(\mathbb S^N)}\sum_{i=0}^\infty  \frac{\vert  p_i(f(z)) - p_i(f(\zeta)) \vert^2}{|f(z)-f(\zeta)|^{2s-1}}(1+i^2)^{-s}< \infty.
    \end{align*}
\end{proof}

\begin{proof}[Proof of Proposition~\ref{Proposition:Regularity}]

Let $u\in H_g^s(\mathbb{S}^N)^G$ be given by $u = \sum_{i=0}^k \langle u, \phi_i \rangle_{L^2_g(\mathbb S^N)}\phi_i.$ We establish first a Hölder regularity estimate for $u$, independent of $k$. Given $z,\zeta\in U_{\varepsilon}$, we have that
\begin{align*}
\frac{\vert u(z) - u(\zeta) \vert^2}{|z-\zeta|^{2s-1}} 
&\leq \left(\sum_{i=0}^k \langle u, \phi_i \rangle_{L^2_g(\mathbb S^N)}^2 \varphi_{N,s}(\lambda_{i})\right) \left(\sum_{i=0}^\infty \frac{[\phi_i]^2_{C^{0,s-1/2}(U_\varepsilon)}}{\varphi_{N,s}(\lambda_{i})}\right) = \Vert u \Vert_{H_g^s(\mathbb{S}^N)}^2 \left(\sum_{i=0}^\infty \frac{[\phi_i]^2_{C^{0,s-1/2}(U_\varepsilon)}}{\varphi_{N,s}(\lambda_{i})}\right).
\end{align*}
By Corollary \ref{cor:HolderContinuity}, we conclude that, for some $C=C(\varepsilon)>0$ independent of $k$, $[u]_{C^{0,s-1/2}(U_\varepsilon)} \leq C\Vert u \Vert_{H_g^s(\mathbb{S}^N)}.$

By the density of smooth functions in $H_g^s(\mathbb{S}^N)$ we conclude that, for every $u \in H_g^s(\mathbb{S}^N)$ and $\varepsilon\in(0,1)$, there exists some $u_\varepsilon \in C^{0,s-1/2}(U_\varepsilon)$ such that $u_\varepsilon = u$ a.e. in $\mathbb S^N$. This implies the desired representation in $\mathbb S^N\setminus \mathcal Z$ and concludes the proof.
\end{proof}


\section{Variational problems with symmetries}\label{Section:Variational}

Now we set up the different variational settings that we use in our analysis.

\subsection{Fractional Yamabe equations on the sphere}

For any smooth $G$-invariant open subset $U$  of $\mathbb{S}^N$, we say that a function $u\in H_{g,0}^s(U)^G$ is a $G$-invariant \emph{weak solution} of \eqref{Equation:DirichletProblemSphere} if 
\[
\langle u,v \rangle_{H_g^s(\mathbb{S}^N)} = \int_{\mathbb{S}^N}\vert u\vert^{2^\ast_s - 2}u v \; dV_g\qquad \text{for any }v\in H_{g,0}^s(U).
\]
If $U=\mathbb{S}^N$, we recover the symmetric fractional Yamabe problem on the sphere
\begin{equation}\label{Equation:FractionalYamabeSphere}
\mathscr{P}_g^s(u) = |u|^{2^\ast_{s}-2}u\quad \text{ on }\mathbb{S}^N,\qquad u\in H_{g}^s(   \mathbb{S}^N    )^G.
\end{equation}
Accordingly, $u\in H_{g}^s(\mathbb{S}^N)^G$ is a $G$-invariant \emph{weak solution} of \eqref{Equation:FractionalYamabeSphere} if 
\[
\langle u,v \rangle_{H_g^s(\mathbb{S}^N)} = \int_{\mathbb{S}^N}\vert u\vert^{2^\ast_s - 2}u v \; dV_g\qquad \text{for any }v\in H_{g}^s(\mathbb{S}^N).
\]

Define the functional $J_U:H_{g,0}^s(U)\rightarrow\mathbb{R}$ given by
\begin{align}\label{J:def}
J_U(u):= \frac{1}{2}\Vert u\Vert^2_{H_g^s(\mathbb{S}^N)} - \frac{1}{2_s^\ast}\int_{\mathbb{S}^N}\vert u\vert^{2_s^\ast} \;dV_g.    
\end{align}
This functional is of class $C^1$ and its derivative is given by
\[
J_U'(u)v = \langle u, v\rangle_{H_g^s(\mathbb{S}^N)} - \int_{\mathbb{S}^N}\vert u\vert^{2_s^\ast-2}u v\; dV_g,\qquad u,v\in H_{g,0}^s(U).
\]
Moreover, we claim that the functional $J_U:H_{g,0}^s(U)\rightarrow\mathbb{R}$ is $G$-invariant. To see this, decompose the functional $J_U$ as $J_U= J_s - L_s$, where
$J_s(u):=\frac{1}{2}\Vert u\Vert_{H_g^s(\mathbb{S}^N)}$ and $L_s(u):=\int_{\mathbb{S}^N}\vert u\vert^{2_s^\ast} \, dV_g$. As it was already noted in Lemma \ref{Lemma:G-invariantFunctional}, $J_s$ is $G$-invariant in $H_g^s(\mathbb{S}^N)$ for every $s\in(0,1]$, so, as $U$ is $G$-invariant, for any $\gamma\in G$, $\gamma(U)= U$ and the functional $J_s$ is also $G$-invariant in $H_{g,0}^s(U)$. To see that $L_s$ is also $G$-invariant, we change variables as follows
\[
L_s(\gamma u) := \int_{\mathbb{S}^N}\vert u\circ\gamma^{-1}\vert^{2_s^\ast}\, dV_g = \int_{U}\vert u\circ\gamma^{-1}\vert^{2_s^\ast}\,dV_g = \int_{U}\vert u\vert^{2_s^\ast}\,dV_g = L_s(u),
\]
and we conclude that $I_s$ is $G$-invariant for any $s\in(0,1]$.

Due to this symmetry and the Principle of Symmetric Criticality \cite{Palais1979}, the critical points of $J_U$ restricted to the space $H_{g,0}^s(U)^G$ correspond to the $G$-invariant solutions to the problem~\eqref{Equation:DirichletProblemSphere}, and to the problem \eqref{Equation:FractionalYamabeSphere} when $U=\mathbb{S}^N$. The nontrivial ones lie on the \emph{Nehari manifold}

\begin{align*}
\mathcal{M}_{U} &:= \left\{u\in H_{g,0}(U)^G\; : \; u\neq 0, J_U'(u)u=0\right\}
=\left\{u\in H_{g,0}(U)^G\; : \; u\neq 0,\ \Vert u\Vert^2_{H_{g}^s(\mathbb{S}^N)} = \int_{U}\vert u\vert^{2_s^\ast}\; dV_g\right\}.
\end{align*}

By Sobolev inequalities,
\[
c_U := \inf_{u\in \mathcal{M}_{U}} J_U(u)>0.
\]
We say $u\in \mathcal{M}_U$ is a least energy $G$-invariant solution to \eqref{Equation:DirichletProblemSphere} if $J(u)=c_U$. We say that it is positive if it is positive in $U$.

\subsection{Fractional Yamabe systems on the sphere} 

Fix $\ell\in\mathbb{N}$ and $\eta_{ij}, a_{ij},b_{ij}\in\mathbb{R}$ such that  $\eta_{ij}=\eta_{ji}<0$, $a_{ij}, b_{ij}>1$, $a_{ij}=b_{ji},$ and $a_{ij}+ b_{ij} = 2_s^\ast$ as in the introduction.
We say that  $\overline{u}=(u_1,\ldots,u_\ell)\in \mathcal{H}_{\mathbb{S}^N}:=\left(H_g^s(\mathbb{S}^N)^G\right)^\ell$ is a $G$-invariant \emph{weak solution} to the system \eqref{Equation:SystemSphere}
if 
\[
\langle u_i, v\rangle_{H_g^s(\mathbb{S}^N)} = \int_{\mathbb{S}^N} \vert u_i\vert^{2_{s}^\ast-2}u_i v\; dV_g + \sum_{j\neq i}^\ell \int_{\mathbb{S}^N} \eta_{ij}b_{ij}\vert u_j\vert^{a_{ij}} \vert u_i\vert^{b_{ij}-2}u_i v\; dV_g\quad 
\text{ for }v\in H_g^s(\mathbb{S}^N),\ i=1,\ldots,\ell.
\]
Let
\[
\Vert \overline{u}\Vert_{\mathcal{H}_{\mathbb{S}^N}}:= \Vert (u_1,\ldots,u_\ell) \Vert_{\mathcal{H}_{\mathbb{S}^N}} := \sqrt{\sum_{i=1}^\ell \Vert u_i\Vert^2_{H_g^s(\mathbb{S}^N)}}
\]
be the norm in $\mathcal{H}_{\mathbb{S}^N}$ induced by the interior product $\langle \overline{u}, \overline{v} \rangle_{\mathcal{H}_{\mathbb{S}^N}} := \sum_{i=1}^\ell \langle u_i, v_i \rangle_{H_g^s(\mathbb{S}^N)}$, 
and consider the functional $\mathcal{J}: \left(H_g^s(\mathbb{S}^N)\right)^\ell\rightarrow\mathbb{R}$, given by
\begin{align}\label{Jdef}
\mathcal{J}(\overline{u}):= \frac{1}{2}\Vert \overline{u} \Vert^2_{\mathcal{H}_{\mathbb{S}^N}} - \frac{1}{2^\ast_{s}}\sum_{i=1}^\ell\int_{\mathbb{S}^N}\vert u_i\vert^{2^\ast_{s}} dV_g 
- \frac{1}{2}\sum_{i\neq j}\int_{\mathbb{S}^N}\eta_{ij}\vert u_j\vert^{a_{ij}}\vert u_i\vert^{ b_{ij}} dV_g.    
\end{align}
Its partial derivatives are 
\[
\partial_i\mathcal{J}(\overline{u})\overline{v}=\langle u_i,v_i\rangle_{H_g^s(\mathbb{S}^N)} - \int_{\mathbb{S}^N} \vert u_i\vert^{2^\ast_{s}-2}u_i v_i\, dV_g - \sum_{i\neq j}\int_{\mathbb{S}^N}\eta_{ij} b_{ij}\vert u_j\vert^{a_{ij}}\vert u_i\vert^{ b_{ij}-2}u_iv_i\, dV_g
\]
for $i=1,\ldots\ell$ and $\overline{u},\overline{v}\in \left(H_g^s(\mathbb{S}^N)\right)^\ell$. 

As before, $\mathcal{J}$ is $G$-invariant and, by the Principle of Symmetric Criticality \cite{Palais1979}, the critical points of $\mathcal{J}$ restricted to the space $\mathcal{H}_{\mathbb{S}^N}$ correspond to the $G$-invariant solutions to the problem \eqref{Equation:SystemSphere}. The fully nontrivial ones (i.e., each component is non trivial) belong to the set
\[
\mathcal{N}_{\mathbb{S}^N}:=\{\overline{u}\in\mathcal{H}_{\mathbb{S}^N}\; | \; u_i\neq 0,  \partial_i\mathcal{J}(\overline{u})u_i=0 \text{ for every }i=1,\ldots,\ell\}.
\]
Observe that $\mathcal{J}(\overline{u})=\frac{s}{N}\sum_{i=1}^\ell \Vert u_i\Vert^2_{H_g^s(\mathbb{S}^N)}$ for every $\overline{u}\in\mathcal{N}_{\mathbb{S}^N}$.

\begin{lemma} \label{lem:away_froM_{d/2}}
	There exists $d_0>0$, independent of $\eta_{ij}$, such that $\min_{i=1,\ldots,\ell}\|u_i\|_{H_g^s(\mathbb{S}^N)}\geq d_0$ if $\overline{u}\in \mathcal{N}_{\mathbb{S}^N}$. 
\end{lemma}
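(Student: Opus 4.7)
The plan is to use the Nehari condition together with the sign of the coupling constants to reduce to a single-component inequality, and then apply the fractional Sobolev embedding to extract a uniform lower bound.

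First, I would write out the Nehari identity $\partial_i \mathcal{J}(\overline{u}) u_i = 0$ explicitly, namely
\[
\|u_i\|^2_{H_g^s(\mathbb{S}^N)} = \int_{\mathbb{S}^N} |u_i|^{2^\ast_s}\, dV_g + \sum_{j\neq i} \int_{\mathbb{S}^N} \eta_{ij}\, b_{ij}\, |u_j|^{a_{ij}} |u_i|^{b_{ij}}\, dV_g.
\]
Since $\eta_{ij} < 0$ and $b_{ij} > 1$, every term in the sum over $j \neq i$ is nonpositive. Dropping these terms therefore yields the one-component bound
\[
\|u_i\|^2_{H_g^s(\mathbb{S}^N)} \leq \int_{\mathbb{S}^N} |u_i|^{2^\ast_s}\, dV_g.
\]

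Next, I would invoke the fractional Sobolev embedding $H_g^s(\mathbb{S}^N) \hookrightarrow L_g^{2^\ast_s}(\mathbb{S}^N)$, which holds thanks to the equivalence of $\|\cdot\|_{H_g^s(\mathbb{S}^N)}$ with the standard norm (Corollary~\ref{cor:equiv}) together with the stereographic isometry of Proposition~\ref{Proposition:IsometricIsomorphism}, which transfers the classical Sobolev inequality $\|v\|_{L^{2^\ast_s}(\mathbb{R}^N)} \leq C_S \|v\|_{D^s(\mathbb{R}^N)}$ to the sphere. Setting $C := C_S^{2^\ast_s}$, this gives
\[
\int_{\mathbb{S}^N} |u_i|^{2^\ast_s}\, dV_g \leq C \|u_i\|^{2^\ast_s}_{H_g^s(\mathbb{S}^N)}.
\]

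Combining the two displayed inequalities and using $u_i \neq 0$, I can divide by $\|u_i\|^2_{H_g^s(\mathbb{S}^N)}$ to obtain
\[
1 \leq C \|u_i\|^{2^\ast_s - 2}_{H_g^s(\mathbb{S}^N)},
\]
which gives $\|u_i\|_{H_g^s(\mathbb{S}^N)} \geq d_0 := C^{-1/(2^\ast_s - 2)} > 0$. This constant $d_0$ depends only on $N$ and $s$ (through the Sobolev constant), and is independent of $\eta_{ij}$ because the coupling terms were discarded using only $\eta_{ij} < 0$. There is no real obstacle here; the argument is essentially the standard Nehari lower bound, and the only ingredient specific to our setting is that the competitive (negative) sign of the coupling allows us to simply ignore the cross terms rather than having to estimate them.
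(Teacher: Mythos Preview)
Your proof is correct and follows essentially the same approach as the paper: use $\eta_{ij}<0$ in the Nehari identity to obtain $\|u_i\|_{H_g^s(\mathbb{S}^N)}^2\leq \int_{\mathbb{S}^N}|u_i|^{2_s^\ast}\,dV_g$, apply the Sobolev inequality, and conclude. The paper's argument is just a more compressed version of what you wrote.
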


\begin{proof}
	Since $\eta_{ij}<0$  for any $\overline{u}\in\mathcal{N}_{\mathbb{S}^N}$, then the Sobolev inequality gives the existence of a constant $C>0$ such that
	\begin{align*}
		\|u_i\|_{H_g^s(\mathbb{S}^N)}^2\leq \int_{\mathbb{S}^N} |u_i|^{2_{s}^\ast}\leq C\|u_i\|_{H_g^s(\mathbb{S}^N)}^{2_{s}^\ast},\quad  \ i=1,\ldots,\ell.
	\end{align*}
	The result follows from this inequality and from the definition of $\mathcal{N}_{\mathbb{S}^N}$.
\end{proof}

By this lemma, the  set $\mathcal{N}_{\mathbb{S}^N}$ is closed in $\mathcal{H}_{\mathbb{S}^N}$ and 
\begin{equation}\label{Equation:InfimumNehari}
\inf_{\overline{u}\in\mathcal{N}_{\mathbb{S}^N}}\mathcal{J}(\overline{u})>0.
\end{equation}
We say that $\overline{u}\in\mathcal{H}_{\mathbb{S}^N}$ is a \emph{least energy $G$-invariant weak solution of~\eqref{Equation:SystemSphere}} if $\overline{u}$ attains this infimum. 

\subsection{Fractional Yamabe equations in the Euclidean space}

We now introduce the symmetric setting in the Euclidean space. As mentioned after Theorem~\ref{Theorem:MainOptimalPartitionPart1}, via the stereographic projection, we can define an action of $G$ in $\mathbb{R}^N$ as follows: for each $\gamma\in G$, define the map $\widetilde{\gamma}:\mathbb{R}^N\rightarrow\mathbb{R}^N$ given by $\widetilde{\gamma}(x):=(\sigma\circ\gamma^{-1}\circ\sigma^{-1})(x)$, which is well defined except for a single point. Observe that $\widetilde{\gamma}$ is not an isometry of $\mathbb{R}^N,$ but it is a conformal diffeomorphism, that is, a diffeomorphism that preserves angles. 
Accordingly, we recall that $\Omega\subset\mathbb{R}^N$ is $G$-invariant if $\tilde{\gamma}x\in\Omega$ for every $x\in\Omega$ and every $\gamma\in G$.
If $\Omega$ is a $G$-invariant and smooth open subset of $\mathbb{R}^N$, possibly $\Omega=\mathbb{R}^N$, the action of $G$ in $\mathbb{R}^N$ also induces an action in $D^s(\Omega)$ as follows: we say that a function $v\in D^s(\Omega)$ is $G$-invariant if
\[
\vert\det D\widetilde{\gamma}(x)\vert^{1/2_{s}^\ast} v(\widetilde{\gamma}x) = v (x),\qquad \text{for every }x\in\Omega, \gamma\in G.
\]

 Observe that $U\subset\mathbb{S}^N$ is a $G$-invariant and smooth open subset if and only if $\tilde{U}:=\sigma(U)\subset\mathbb{R}^N$ is $G$-invariant. Noting that
\begin{align*}
|\det D\widetilde{\gamma}(x)| = \left(\frac{\psi_s(x)}{\psi_s(\widetilde{\gamma}(x))}\right)^{2_s^*},    
\end{align*}
we also conclude that a function $u:U\to\mathbb{R}$ is $G$-invariant if and only if $\tilde{u}:=\psi_su\circ\sigma^{-1}:\tilde{U}\to\mathbb{R}$ is $G$-invariant. 

Given a $G$-invariant smooth domain $\Omega$ in $\mathbb{R}^N$, the space
\[
D^s(\Omega)^G:=\{v\in D^s(\Omega)\;:\; v \text{ is }G\text{-invariant}\}
\]
coincides with the closure of $C_c^\infty(\Omega)^G$ in $D^s(\Omega)$ and it is infinite dimensional, due to the existence of $G$-invariant partitions of the unity \cite{Palais1961}. We say that a function $v\in D^s(\Omega)^G$ is a \emph{$G$-invariant weak solution} of \eqref{eq:3a:intro}
if 
\[
\langle v,w \rangle = \int_\Omega \vert v\vert^{2_s^\ast-2}vw\, dx\qquad \text{ for any }w\in C_c^\infty(\Omega).
\]

When $\Omega=\mathbb{R}^N$, a function $v\in D^s(\mathbb{R}^N)^G$ is a \emph{$G$-invariant weak solution} to the symmetric fractional Yamabe problem \eqref{fyprn} if
\[
\langle v,w \rangle = \int_{\mathbb{R}^N} \vert v\vert^{2_s^\ast-2}vw\, dx\qquad \text{ for any }w\in C_c^\infty(\mathbb{R}^N).
\]

Consider the energy functional and the Nehari manifold
\begin{align}\label{JOdef}
J_\Omega(v):=\frac{1}{2}\Vert v\Vert^2-\frac{1}{2_s^*}\int_\Omega|v|^{2_s^*},\qquad 
\mathcal{M}_\Omega:=\{v\in D^{s}(\Omega)^G:v\neq 0,\;J_\Omega'(v)v=0\}.
\end{align}

We say that $v\in \mathcal{M}_\Omega$ is a \emph{least energy} $G$-invariant weak solution of \eqref{eq:3a:intro} if $J_\Omega(v)=\inf_{\mathcal{M}_\Omega}J_\Omega.$  By Sobolev inequalities,
\begin{align*}
   c_\Omega:= \inf_{\mathcal{M}_\Omega}J_\Omega>0.
\end{align*}

\subsection{Fractional Yamabe systems in the Euclidean space}

Given $\ell\in\mathbb{N}$ and $\eta_{ij}, a_{ij},b_{ij}\in\mathbb{R}$ such that  $\eta_{ij}=\eta_{ji}<0$, $a_{ij}, b_{ij}>1$, $a_{ij}=b_{ji}$ and $a_{ij}+ b_{ij} = 2_s^\ast$, for each $i,j=1,\ldots,\ell$, we say that  $\overline{v}=(v_1,\ldots,v_\ell)\in \mathcal{H}_{\rn}:=\left(D^s(\rn)^G\right)^\ell$ is a \emph{$G$-invariant weak solution} to the system
\begin{equation}\label{Eq:SystemR^N}
(-\Delta)^s v_i = \vert v_i\vert^{2_{s}^\ast-2}v_i - \sum_{i\neq j} \eta_{ij}\vert v_j\vert^{a_{ij}}\vert v_i\vert^{ b_{ij}-2}v_i\quad \text{in }\mathbb{R}^N,\qquad  i,j=1,\ldots,\ell,
\end{equation}
if 
\[
\langle v_i, \vartheta\rangle = \int_{\rn} \vert v_i\vert^{2_{s}^\ast-2}v_i \vartheta\; dx+ \sum_{j\neq i}^\ell \int_{\rn} \eta_{ij}b_{ij}\vert v_j\vert^{a_{ij}} \vert v_i\vert^{b_{ij}-2}v_i \vartheta\; dx\quad 
\text{ for all }\vartheta\in D^s(\rn),\ i=1,\ldots,\ell.
\]

As in the case of the sphere, let
\[
\Vert \overline{v}\Vert:= \Vert (v_1,\ldots,v_\ell) \Vert := \sqrt{\sum_{i=1}^\ell \Vert v_i\Vert^2}
\]
be the norm in $\mathcal{H}_{\rn}$ induced by the interior product $\langle \overline{v}, \overline{\vartheta} \rangle := \sum_{i=1}^\ell \langle v_i, \vartheta_i \rangle$. Making a slight abuse of notation, we also define the the functional $\mathcal{J}: \left(D^s(\rn)\right)^\ell\rightarrow\mathbb{R}$ as
\[
\mathcal{J}(\overline{v}):= \frac{1}{2}\Vert \overline{v} \Vert^2 - \frac{1}{2^\ast_{s}}\sum_{i=1}^\ell\int_{\rn}\vert v_i\vert^{2^\ast_{s}} dx
- \frac{1}{2}\sum_{i\neq j}\int_{\rn}\eta_{ij}\vert v_j\vert^{a_{ij}}\vert v_i\vert^{ b_{ij}} dx.
\]

This is the same notation we used in \eqref{Jdef}, but it should not cause confusion since its meaning is clear from context.  As before, $\mathcal{J}$ is $G$-invariant and, by the Principle of Symmetric Criticality \cite{Palais1979}, the critical points of $\mathcal{J}$ restricted to the space $\mathcal{H}_{\mathbb{R}^N}$ correspond to the $G$-invariant weak solutions to the problem. The fully nontrivial ones belong to the set
\begin{align}\label{Nrndef}
\mathcal{N}_{\rn}:=\{\overline{v}\in\mathcal{H}_{\rn}\; | \; v_i\neq 0,  \partial_i\mathcal{J}(\overline{v})v_i=0 \text{ for every }i=1,\ldots,\ell\}.
\end{align}
Observe that $\mathcal{J}(\overline{v})=\frac{s}{N}\sum_{i=1}^\ell \Vert v_i\Vert^2$ for every $\overline{v}\in\mathcal{N}_{\rn}$.  We say that $\overline{v}\in \mathcal{N}_{\rn}$ is a least energy solution of \eqref{Eq:SystemR^N} if $\mathcal{J}(\overline{v})=\inf_{\mathcal{N}_{\rn}}\mathcal{J}.$
Arguing as in Lemma~\ref{lem:away_froM_{d/2}}, we also have that
\begin{align}\label{Equation:InfimumNehariRN}
    \inf_{\mathcal{N}_{\rn}}\mathcal{J}>0.
\end{align}

\section{Existence results} \label{Section:SystemsAndSegregation}

In this section, we prove the existence of positive least energy $G$-invariant weak solutions to the symmetric fractional Yamabe problems \eqref{Equation:FractionalYamabeSphere} and
\eqref{fyprn}, to the symmetric Dirichlet boundary problems \eqref{Equation:DirichletProblemSphere} and \eqref{eq:3a:intro}, and to the systems \eqref{Eq:SystemR^N} and \eqref{Equation:SystemSphere}.

\subsection{Equivalence via isometry} \label{Section:Stereographic}

First, we state and prove the equivalences between problems on the sphere and in the Euclidean space. To this end, notice that the isometry 
\begin{align*}
\iota_s : u\mapsto \widetilde{u}:=\psi_s u\circ\sigma^{-1}    
\end{align*}
given in Proposition~\ref{Proposition:IsometricIsomorphism} gives (with a slight abuse of notation) the isometric isomorphisms
\begin{equation}\label{SymmetricIsometryDomains}
\iota_s: H_g^s(\mathbb{S}^N)^G \rightarrow D^s(\mathbb{R}^N)^G
\qquad \text{ and }\qquad 
\iota_s: H_g^s(U)^G \rightarrow D^s(\widetilde{U})^G,
\end{equation}
where $U\subset\mathbb{S}^N$ and $\widetilde{U}:=\sigma(U)\subset\mathbb{R}^N$ are $G$-invariant.

\begin{lemma}\label{Lemma:EquivalentProblemsGinvariant}
Let $s\in(0,1)$, 
\begin{enumerate}[(i)]
\item $u\in H_g^s(\mathbb{S}^N)^G$ is a $G$-invariant weak solution of the fractional Yamabe problem ~\eqref{Equation:FractionalYamabeSphere} if and only if $\widetilde{u}\in D^s(\mathbb{R}^N)^G$ is a $G$-invariant weak solution of the fractional Yamabe problem ~\eqref{fyprn}.
\item For any $G$-invariant smooth domain  $U\subset\mathbb{S}^N$, if $\widetilde{U}:=\sigma(U)$, then $u\in H_{g,0}^s(U)^G$ is a $G$-invariant weak solution of \eqref{Equation:DirichletProblemSphere} if and only if $\widetilde{u}\in D^{s}(\widetilde{U})^G$ is a $G$-invariant weak solution of~\eqref{eq:3a:intro}. 
\item $\overline{u}=(u_1,\ldots,u_\ell)\in \left(H_g^s(\mathbb{S}^N)^G\right)^\ell$ is a $G$-invariant weak solution of~\eqref{Equation:SystemSphere} if and only if $\overline{v}:=(\widetilde{u}_1,\ldots,\widetilde{u}_\ell)\in \left(D^s(\mathbb{R}^N)^G\right)^\ell$ is a $G$-invariant weak solution of~\eqref{Eq:SystemR^N}.
\end{enumerate}
\end{lemma}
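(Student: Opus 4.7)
The plan is to combine two ingredients already established in the paper: the conformal intertwining identity of Lemma \ref{lem:1} (which transports $\mathscr{P}^s_g$ to $(-\Delta)^s$ up to weights of $\psi_s$) and the isometry identity \eqref{Equation:Isometry} in Proposition \ref{Proposition:IsometricIsomorphism}, which states that $\langle u,w\rangle_{H_g^s(\mathbb{S}^N)}=\langle\iota_s(u),\iota_s(w)\rangle$. The arithmetic that makes everything work is the compatibility $dV_g=\psi_s^{2_s^*}\,dx$ under stereographic projection, combined with $u\circ\sigma^{-1}=\iota_s(u)/\psi_s$, which forces every power of $\psi_s$ appearing in critical nonlinearities (or critical couplings) to cancel exactly. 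Moreover, $\iota_s$ intertwines the two $G$-actions: since $\tilde\gamma=\sigma\circ\gamma^{-1}\circ\sigma^{-1}$ and $|\det D\tilde\gamma(x)|^{1/2_s^*}=\psi_s(x)/\psi_s(\tilde\gamma x)$, a direct computation shows that $u$ is $G$-invariant on $\mathbb{S}^N$ iff $\iota_s(u)$ is $G$-invariant on $\rn$, so \eqref{SymmetricIsometryDomains} applies.

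For part $(i)$, suppose $u\in H_g^s(\mathbb{S}^N)^G$ satisfies the weak formulation on the sphere and pick any $\vartheta\in D^s(\rn)$. By surjectivity of $\iota_s:H_g^s(\mathbb{S}^N)\to D^s(\rn)$, there is a unique $w\in H_g^s(\mathbb{S}^N)$ with $\iota_s(w)=\vartheta$, and $\langle u,w\rangle_{H_g^s(\mathbb{S}^N)}=\langle\iota_s(u),\vartheta\rangle$. For the nonlinear term, change variables:
\[
\int_{\mathbb{S}^N}|u|^{2_s^*-2}u\,w\,dV_g=\int_{\rn}\psi_s^{-(2_s^*-1)}|\iota_s(u)|^{2_s^*-2}\iota_s(u)\cdot\psi_s^{-1}\vartheta\cdot\psi_s^{2_s^*}\,dx=\int_{\rn}|\iota_s(u)|^{2_s^*-2}\iota_s(u)\,\vartheta\,dx,
\]
since $-(2_s^*-1)-1+2_s^*=0$. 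Hence $\iota_s(u)$ solves \eqref{fyprn} weakly; the reverse implication follows by applying $\iota_s^{-1}$ with the same identities. Part $(ii)$ is entirely analogous: by \eqref{SymmetricIsometryDomains}, $\iota_s$ restricts to an isometric isomorphism $H_{g,0}^s(U)^G\to D^s(\widetilde U)^G$, and any $\vartheta\in C_c^\infty(\widetilde U)$ is the image of some test function supported in $U$; the integration identity above is unaffected because $\psi_s$ is smooth and positive on $\sigma^{-1}(\widetilde U)$.

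For part $(iii)$, the only new piece is the coupling term. Writing $\tilde v_k:=\iota_s(u_k)$ and using $a_{ij}+b_{ij}=2_s^*$,
\[
\int_{\mathbb{S}^N}|u_j|^{a_{ij}}|u_i|^{b_{ij}-2}u_i\,w\,dV_g=\int_{\rn}\psi_s^{-(a_{ij}+b_{ij}-1)}|\tilde v_j|^{a_{ij}}|\tilde v_i|^{b_{ij}-2}\tilde v_i\cdot\psi_s^{-1}\iota_s(w)\cdot\psi_s^{2_s^*}\,dx=\int_{\rn}|\tilde v_j|^{a_{ij}}|\tilde v_i|^{b_{ij}-2}\tilde v_i\,\iota_s(w)\,dx,
\]
so the Euclidean coupling emerges with the same constants $\eta_{ij}$ and $b_{ij}$; applying this and the computation from part $(i)$ to each component $i=1,\dots,\ell$ gives the desired equivalence. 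There is no real obstacle here: the statement is essentially a bookkeeping exercise, with the subtle point being that the spaces of admissible test functions on the two sides are in bijection via $\iota_s$, so an identity proved for all $w\in H_g^s(\mathbb{S}^N)$ (or $H_{g,0}^s(U)$) transfers to all $\vartheta\in D^s(\rn)$ (resp.\ $C_c^\infty(\widetilde U)$). The critical exponent $2_s^*=\frac{2N}{N-2s}$ is precisely what makes every power of $\psi_s$ cancel.
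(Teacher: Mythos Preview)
Your proof is correct and follows essentially the same approach as the paper: both use the isometry identity \eqref{Equation:Isometry} for the bilinear form and the change of variables $dV_g=\psi_s^{2_s^*}dx$ together with $u\circ\sigma^{-1}=\psi_s^{-1}\iota_s(u)$ to show that the critical-exponent powers of $\psi_s$ cancel in the nonlinear and coupling terms. Your write-up is simply more explicit about the power counting and the bijection of test functions than the paper's terse version.
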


\begin{proof}
We only argue $(iii)$, since $(i)$ and $(ii)$ follow similarly. Let $\overline{u}=(u_1,\ldots,u_\ell)\in \left(H_g^s(\mathbb{S}^N)^G\right)^\ell$ and note that
\begin{equation}\label{Equation:EquivalenceCriticalNorms2}
\int_{\mathbb{S}^N} \vert u_j\vert^{a_{ij}}\vert u_i\vert^{ b_{ij}-2}u_i w \, dV_g = \int_{\mathbb{R}^N} \vert \widetilde{u}_j\vert^{a_{ij}}\vert \widetilde{u}_i\vert^{ b_{ij}-2}\widetilde{u}_i \widetilde{w}\, dx \qquad \text{for any }\ w\in  C^\infty(\mathbb{S}^N),
\end{equation}
and
\begin{equation}\label{Equation:EquivalentSystems}
\int_{\mathbb{S}^N} \vert u_i\vert^{2_{s}^\ast-2}u_i w \, dV_g = \int_{\mathbb{R}^N} \vert \widetilde{u}_i\vert^{2_{s}^\ast-2}\widetilde{u}_i \widetilde{w}\, dx \qquad \text{for any }\ w\in  C^\infty(\mathbb{S}^N).
\end{equation}
Hence, $(iii)$ follows from \eqref{Equation:Isometry} together with ~\eqref{Equation:EquivalenceCriticalNorms2}, and~\eqref{Equation:EquivalentSystems}.

%

\end{proof}

\subsection{Existence of symmetric solutions to Yamabe problems}\label{Section:ExistenceDirichlet}

We next prove the existence of least energy $G$-invariant solutions to the problems \eqref{Equation:FractionalYamabeSphere} and \eqref{Equation:DirichletProblemSphere}. The main tool is the following Sobolev embedding for the fractional Sobolev spaces in the presence of symmetries, needed to restore the lack of compactness in critical Sobolev exponent problems. In the following, to emphasize the role of the dimension, we use the notation $2^\ast_{L,s}=\frac{2L}{L-2s}$ to denote the critical Sobolev exponent of dimension $L$ and, as before, $2^\ast_{N,s}=2^*_s$.  Recall that $G=O(m)\times O(n)$ with $m+n=N+1$ and $m,n\geq 2.$

\begin{lemma}\label{Lemma:SymmetricSobolevEmbeddings}
Let $U\subset\mathbb{S}^N$ be a $G$-invariant smooth domain and define $\kappa:=\min_{z\in\mathbb{S}^N}\dim Gz$ such that $\kappa \geq 1$. Then, for any $s\in(0,1)$, the embedding 
\[
H_{g,0}^s(U)^G\hookrightarrow L_g^p(U)
\]
is continuous for $p\in[1,2^\ast_{N-\kappa,s}]$ and compact for $p\in[1,2^\ast_{N-\kappa,s})$. In particular, the embeddings $H_g^s(\mathbb{S}^N)^G\hookrightarrow L_g^{2^*_s}(\mathbb{S}^N)$ and $D^{s}(\R^N)^G\hookrightarrow L^{2^*_s}(\R^N)$ are compact. Moreover, the embedding $H_{g,0}^s(U)^G\hookrightarrow L_g^{2_s^\ast}(U)$ is compact and if $\Omega$ is a smooth $G$-invariant domain in $\mathbb{R}^N$, then also the embedding $D^s(\Omega)^G\hookrightarrow L^{2^\ast_s}(\Omega)$ is compact.

\end{lemma}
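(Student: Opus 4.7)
The whole lemma reduces to the key continuous embedding
\[
H_g^s(\mathbb{S}^N)^G \hookrightarrow L^{2^\ast_{N-\kappa,s}}_g(\mathbb{S}^N),
\]
and the rest is routine. Indeed, subcritical compactness $H_{g,0}^s(U)^G\hookrightarrow L_g^p(U)$ for $p<2^\ast_{N-\kappa,s}$ follows by interpolating this with the standard Rellich compactness $H_g^s(\mathbb{S}^N)\hookrightarrow\hookrightarrow L^2_g(\mathbb{S}^N)$ on the compact manifold $\mathbb{S}^N$ (no symmetry needed). Since $\kappa\geq 1$ gives $2^\ast_s=2^\ast_{N,s}<2^\ast_{N-\kappa,s}$, the standard critical exponent is strictly subcritical in the symmetric sense, yielding the compactness at $2^\ast_s$ on $\mathbb{S}^N$ and on $U$. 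The Euclidean statements follow by transferring along the isometric isomorphism $\iota_s$ of Proposition \ref{Proposition:IsometricIsomorphism} and \eqref{SymmetricIsometryDomains}; for $\Omega\subsetneq\mathbb{R}^N$ we further use $D^s(\Omega)^G\subset D^s(\mathbb{R}^N)^G$ by extension by zero (Lemma \ref{A}).

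For the key embedding, the plan is to use the one-dimensional reduction of Proposition \ref{Proposition:IsometricIsomorphismIntroduction}: writing $u=w\circ f$ with $w\in H_h^s([-1,1])$, the $L^p$-norm becomes $\int_{-1}^1|w|^p h\, dt$, so the claim is equivalent to a weighted one-dimensional Sobolev embedding $H_h^s([-1,1])\hookrightarrow L_h^p([-1,1])$ for $p\leq 2^\ast_{N-\kappa,s}$. The weight $h(t)=C(1-t)^{m/2-1}(1+t)^{n/2-1}$ degenerates at $t=\pm 1$; passing to the geodesic angular variable $\theta$ with $1\mp t\sim\theta^2$, one sees that these degeneracies correspond to the two singular orbits having normal dimensions $n$ and $m$. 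I would split $[-1,1]$ into an interior piece and two endpoint pieces and estimate separately: on the interior $f^{-1}[-1+\varepsilon,1-\varepsilon]$, Corollary \ref{lemma:BoundsJacobiPolynomials} and Proposition \ref{Proposition:Regularity} yield even a uniform $L^\infty$ control; near each endpoint, a $G$-invariant diffeomorphism identifies a tubular neighborhood with $\mathbb{S}^{m-1}\times\mathbb{B}^n$ (resp.\ $\mathbb{B}^m\times\mathbb{S}^{n-1}$), a $G$-invariant function becomes radial in the disc factor, and the standard fractional Sobolev embedding on $\mathbb{R}^n$ (resp.\ $\mathbb{R}^m$) yields the local critical exponent $2^\ast_{n,s}$ (resp.\ $2^\ast_{m,s}$). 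The binding global exponent is the smaller one, $2^\ast_{\max(m,n),s}=2^\ast_{N-\kappa,s}$.

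The main obstacle is the endpoint analysis: unlike the local $H^1$ seminorm, the nonlocal Gagliardo seminorm does not localize cleanly, since cutoffs introduce nontrivial tail terms coming from the complement of the tubular neighborhood. Moreover, reducing a $G$-invariant function on the tube to a function on the normal slice requires a Fubini-type decomposition compatible with the product metric $g=\tfrac14 d\theta^2+\cos^2(\theta/2)g_{\mathbb{S}^{m-1}}+\sin^2(\theta/2)g_{\mathbb{S}^{n-1}}$, and the angular volume factors must be tracked carefully to convert the global $H^s$-seminorm on $\mathbb{S}^N$ into the fractional seminorm on the slice. Once this endpoint estimate is in place, gluing via a $G$-invariant partition of unity completes the proof of the key embedding.
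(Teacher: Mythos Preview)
Your reductions in the first paragraph are correct and coincide with the paper's own argument: once the key continuous embedding $H_g^s(\mathbb{S}^N)^G\hookrightarrow L_g^{2^\ast_{N-\kappa,s}}(\mathbb{S}^N)$ is known, interpolation with the standard Rellich embedding gives subcritical compactness, the strict inequality $2^\ast_s<2^\ast_{N-\kappa,s}$ (from $\kappa\geq 1$) turns the usual critical exponent into a symmetric-subcritical one, the domain case follows from $H_{g,0}^s(U)^G\hookrightarrow H_g^s(\mathbb{S}^N)^G$, and the Euclidean statements are transported via the isometries of Proposition~\ref{Proposition:IsometricIsomorphism}.

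Where you diverge from the paper is precisely at the key embedding. The paper does not prove it directly: it invokes \cite[Lemma~4.3]{AbreuBarbosaRamirez2022}, which adapts the Hebey--Vaugon machinery \cite{HebeyVaugon1997} to the fractional setting on closed manifolds. Your proposal instead tries to establish the embedding from scratch via the one-dimensional reduction $u=w\circ f$. This is a reasonable program, but as written it is not a proof: you yourself flag the endpoint analysis as ``the main obstacle'' and only describe what would need to be done (localize the Gagliardo seminorm near the singular orbits, identify the tube with a product, reduce to a radial fractional Sobolev inequality on the normal slice) without carrying it out. The difficulty you point to---that the nonlocal seminorm does not localize cleanly under cutoffs---is exactly what makes this nontrivial and is why the paper defers to the existing literature.

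There is also a specific gap in your interior step: you invoke Proposition~\ref{Proposition:Regularity} for an $L^\infty$ bound on $f^{-1}[-1+\varepsilon,1-\varepsilon]$, but that proposition is stated and proved only for $s\in(1/2,1)$, whereas the present lemma is claimed for all $s\in(0,1)$. For the interior piece you do not actually need pointwise control; any subcritical $L^p$ bound suffices there, and that follows already from the standard (nonsymmetric) Sobolev embedding on $\mathbb{S}^N$. So this is fixable, but as stated your argument does not cover $s\le 1/2$.
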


\begin{proof}
The case $U = \mathbb{S}^N$ is proven in \cite[Lemma 4.3]{AbreuBarbosaRamirez2022}, following the ideas introduced in \cite{HebeyVaugon1997}. For the case $U\neq\mathbb{S}^N$, as $H_{g,0}^s(U)^G\hookrightarrow H_{g}^s(\mathbb{S}^N)$ is continuous, the composition
\[
H_{g,0}^s(U)^G\hookrightarrow H_{g}^s(\mathbb{S}^N)^G\hookrightarrow L_g^p(\mathbb{S}^N)
\]
is continuous for any $p\in[1,2^\ast_{N-\kappa,s}]$ and compact for any $p\in[1,2^\ast_{N-\kappa,s})$. Since $\kappa\geq 1$ because $m,n\geq 2$, we have that $p=2_{N-\kappa,s}^\ast<2_{N,s}^\ast=2_s^\ast$, where we conclude the desired embeddings for domains in the sphere. The results in the Euclidean space follow from these embeddings and the isometries given in Proposition~\ref{Proposition:IsometricIsomorphism} and in \eqref{SymmetricIsometryDomains}.
\end{proof}

Now, as a direct consequence of Lemma~\ref{Lemma:SymmetricSobolevEmbeddings}, we prove the existence of positive least energy solutions to the fractional Yamabe problem 
 on the sphere \eqref{Equation:FractionalYamabeSphere} and to the Dirichlet boundary problem \eqref{Equation:DirichletProblemSphere}. The last one implies that the optimal $(G,\ell)$-partition problem~\eqref{Equation:OptimalPartitionProblemSphere} is well defined.

\begin{proposition}\label{Proposition:ExistenceDirichlet}
\begin{enumerate}[(i)]
\item The equation~\eqref{Equation:FractionalYamabeSphere} admits, at least, one positive least energy $G$-invariant weak solution.
\item For any $G$-invariant smooth domain $U\subset\mathbb{S}^N$, the Dirichlet boundary problem~\eqref{Equation:DirichletProblemSphere} admits, at least, one positive least energy $G$-invariant weak solution. In particular, the $G$-invariant optimal partition problem \eqref{Equation:OptimalPartitionProblemSphere} is well defined.
\end{enumerate}
\end{proposition}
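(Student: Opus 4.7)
The plan is to find a minimizer of $J_U$ on the Nehari manifold $\mathcal{M}_U$ by the direct method of the calculus of variations. The essential ingredient is the compact embedding $H_{g,0}^s(U)^G\hookrightarrow L_g^{2_s^*}(U)$ from Lemma~\ref{Lemma:SymmetricSobolevEmbeddings}: this compactness, restored by the $G$-symmetry despite the critical Sobolev exponent, is precisely the reason the symmetric setting is considered and is what makes the argument work.

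First I would verify $c_U>0$ and extract a bounded minimizing sequence. Since $u\in\mathcal{M}_U$ means $\|u\|_{H_g^s(\mathbb{S}^N)}^2=\int_U|u|^{2_s^*}\,dV_g$, the Sobolev inequality yields $\|u\|_{H_g^s(\mathbb{S}^N)}\geq \delta>0$ uniformly on $\mathcal{M}_U$; together with the identity $J_U(u)=\frac{s}{N}\|u\|_{H_g^s(\mathbb{S}^N)}^2$ valid on $\mathcal{M}_U$, this yields $c_U\geq \frac{s}{N}\delta^2>0$ and ensures that any minimizing sequence $(u_k)\subset\mathcal{M}_U$ is bounded in $H_{g,0}^s(U)^G$. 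Pass to a weak limit $u_k\rightharpoonup u$; by the compact embedding, $u_k\to u$ strongly in $L_g^{2_s^*}(U)$, so the lower bound forces $\int_U|u|^{2_s^*}\geq \delta^2>0$, giving $u\neq 0$. Weak lower semicontinuity then yields $\|u\|_{H_g^s(\mathbb{S}^N)}^2\leq \liminf\|u_k\|_{H_g^s(\mathbb{S}^N)}^2=\lim\int_U|u_k|^{2_s^*}=\int_U|u|^{2_s^*}$.

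To conclude $u\in\mathcal{M}_U$ and $J_U(u)=c_U$, I use the Nehari projection: there is a unique $t_u>0$ with $t_u u\in\mathcal{M}_U$, characterized by $t_u^{2_s^*-2}=\|u\|_{H_g^s(\mathbb{S}^N)}^2/\int_U|u|^{2_s^*}\leq 1$, so
\begin{equation*}
c_U\leq J_U(t_u u)=\tfrac{s}{N}t_u^2\|u\|_{H_g^s(\mathbb{S}^N)}^2\leq \tfrac{s}{N}\|u\|_{H_g^s(\mathbb{S}^N)}^2\leq \tfrac{s}{N}\liminf\|u_k\|_{H_g^s(\mathbb{S}^N)}^2=c_U,
\end{equation*}
forcing $t_u=1$, $u\in\mathcal{M}_U$ and $J_U(u)=c_U$. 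The Principle of Symmetric Criticality \cite{Palais1979}, already invoked in Section~\ref{Section:Variational}, turns this critical point of $J_U|_{H_{g,0}^s(U)^G}$ into a $G$-invariant weak solution of \eqref{Equation:DirichletProblemSphere}. Part $(i)$ is the special case $U=\mathbb{S}^N$, where $H_{g,0}^s(\mathbb{S}^N)=H_g^s(\mathbb{S}^N)$ and one recovers \eqref{Equation:FractionalYamabeSphere}.

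For positivity, observe that $|u|$ is also admissible: by the pointwise reverse triangle inequality applied inside the double integral defining the seminorm, $\||u|\|_{H_g^s(\mathbb{S}^N)}\leq \|u\|_{H_g^s(\mathbb{S}^N)}$, while the $L^{2_s^*}$ term is invariant under $u\mapsto|u|$. Reapplying the Nehari projection to $|u|$ produces a nonnegative minimizer $v\geq 0$, hence a weak solution of \eqref{Equation:DirichletProblemSphere}. A strong maximum principle for $\mathscr{P}_g^s$---which can be transported from the Euclidean setting through the isometry $\iota_s$ of Proposition~\ref{Proposition:IsometricIsomorphism} and the standard positivity theory for $(-\Delta)^s$---then upgrades $v\geq 0$ to $v>0$ in $U$. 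The finiteness and positivity of $c_U$ thus makes the $(G,\ell)$-partition problem \eqref{Equation:OptimalPartitionProblemSphere} well-defined. I expect the positivity step to be the main subtlety: unlike the local case, the nonlocal fractional norm does not decompose cleanly under $u\mapsto |u|$, but the monotonicity of the singular integral is enough for the Nehari projection to survive, and the maximum principle closes the argument.
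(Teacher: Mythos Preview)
Your proof is correct and follows essentially the same approach as the paper: the paper's own argument invokes Lemma~\ref{Lemma:SymmetricSobolevEmbeddings} together with ``standard variational methods'' (citing \cite{HS21}), which is exactly the direct minimization on the Nehari manifold you spell out, and it handles positivity via the inequality $\|\,|u|\,\|\le\|u\|$ combined with the maximum principle transported through the isometry $\iota_s$, just as you do. The only cosmetic difference is that the paper passes to $\mathbb{R}^N$ before invoking $\langle v,v\rangle\ge\langle |v|,|v|\rangle$, whereas you apply the reverse triangle inequality directly in the Gagliardo seminorm on the sphere; since $\iota_s(|u|)=|\iota_s(u)|$, the two are equivalent.
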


\begin{proof}
The existence of least energy $G$-invariant weak solutions follows from Lemma~\ref{Lemma:SymmetricSobolevEmbeddings} together with standard variational methods, for instance, adapting the arguments in \cite[Theorem~1.1]{HS21}. The fact that the solution is nonnegative follows directly from the isometric isomorphism in Proposition~\ref{Proposition:IsometricIsomorphism} and the well known inequality
$
\langle v,v \rangle
\geq 
\langle |\overline{v}|,|\overline{u}| \rangle
$, which is an easy direct computation using that $|v|=v^+-v^-,$ where $v^+:=\max\{v,0\}\geq 0$ and $v^-:=\min\{v,0\}\leq 0$. 
The positivity follows from the maximum principle applied, for instance, to the problem in $\mathbb R^N$ (using the stereographic projection).
\end{proof}

An immediate consequence of this proposition together with Lemma~\ref{Lemma:EquivalentProblemsGinvariant} is the following existence result for the problems in the Euclidean space.

\begin{corollary}\label{Corollary:ExistenceDirichletRN}
\begin{enumerate}[(i)]
\item The fractional Yamabe problem ~\eqref{fyprn} admits, at least one positive least energy $G$-invariant weak solution.
\item For any $G$-invariant smooth domain $\Omega\subset\mathbb{R}^N$, the Dirichlet boundary problem~\eqref{eq:3a:intro} admits, at least, one positive least energy $G$-invariant weak solution. In particular, the $G$-invariant optimal partition problem \eqref{OPPRN} is well defined.
\end{enumerate}
\end{corollary}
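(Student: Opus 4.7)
The plan is to deduce Corollary \ref{Corollary:ExistenceDirichletRN} directly from Proposition \ref{Proposition:ExistenceDirichlet} by transporting least energy $G$-invariant solutions on the sphere to the Euclidean space through the conformal isometry $\iota_s$ from Proposition \ref{Proposition:IsometricIsomorphism}, together with the equivalence of the variational formulations contained in Lemma \ref{Lemma:EquivalentProblemsGinvariant}.

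For part $(i)$, I would take a positive least energy $G$-invariant weak solution $u\in H_g^s(\mathbb{S}^N)^G$ of \eqref{Equation:FractionalYamabeSphere} provided by Proposition \ref{Proposition:ExistenceDirichlet}$(i)$ and set $v:=\iota_s(u)=\psi_s\,u\circ\sigma^{-1}$. By Lemma \ref{Lemma:EquivalentProblemsGinvariant}$(i)$, $v\in D^s(\mathbb{R}^N)^G$ is a $G$-invariant weak solution of \eqref{fyprn}. Positivity of $v$ on $\mathbb{R}^N$ is immediate since $\psi_s>0$ and $u>0$ on $\mathbb{S}^N$. The isometry of Proposition \ref{Proposition:IsometricIsomorphism} preserves both the $H_g^s(\mathbb{S}^N)$-norm and the $L_g^{2_s^\ast}(\mathbb{S}^N)$-norm, so the condition $J'_{\mathbb{S}^N}(u)u=0$ translates into $J'_{\mathbb{R}^N}(v)v=0$; thus $\iota_s$ induces a bijection $\mathcal{M}_{\mathbb{S}^N}\leftrightarrow \mathcal{M}_{\mathbb{R}^N}$ with equal energies $\tfrac{s}{N}\|u\|^2_{H_g^s(\mathbb{S}^N)}=\tfrac{s}{N}\|v\|^2$, showing that $v$ attains $c_{\mathbb{R}^N}=\inf_{\mathcal{M}_{\mathbb{R}^N}} J_{\mathbb{R}^N}$.

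For part $(ii)$, given a $G$-invariant smooth domain $\Omega\subset\mathbb{R}^N$, I would associate to it the $G$-invariant open set $U\subset\mathbb{S}^N$ defined by $U:=\sigma^{-1}(\Omega)$ when $\Omega$ is bounded, and by $U:=\sigma^{-1}(\Omega)\cup\{-e_{N+1}\}$ otherwise, so that $U$ is a $G$-invariant smooth open subset of $\mathbb{S}^N$ with $\Omega=\sigma(U\smallsetminus\{-e_{N+1}\})$ (the smoothness of $U$ at the south pole, when relevant, follows because the pull-back of $\Omega$ under stereographic projection is smooth at infinity precisely when $\Omega$ is a smooth exterior domain). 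Proposition \ref{Proposition:ExistenceDirichlet}$(ii)$ yields a positive least energy $G$-invariant weak solution $u\in H_{g,0}^s(U)^G$ of \eqref{Equation:DirichletProblemSphere}. Then, by \eqref{SymmetricIsometryDomains} and Lemma \ref{Lemma:EquivalentProblemsGinvariant}$(ii)$, $v:=\iota_s(u)\in D^s(\Omega)^G$ is a $G$-invariant weak solution of \eqref{eq:3a:intro}, positive in $\Omega$, and the same isometry argument as in $(i)$ gives $J_\Omega(v)=J_U(u)=c_U=c_\Omega$, so $v$ is a least energy solution. In particular, $c_{\Omega_i}>0$ is attained for every admissible $\Omega_i\in\mathfrak{P}_\ell$, so the optimal partition problem \eqref{OPPRN} is well defined.

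The argument has essentially no obstacles: everything reduces to known isometries and equivalences. The only mildly delicate point is ensuring the sphere-side domain $U$ is genuinely smooth (and $G$-invariant) at the south pole when $\Omega$ is unbounded, but this is a routine verification using that the stereographic projection $\sigma$ is a conformal diffeomorphism between $\mathbb{S}^N\smallsetminus\{-e_{N+1}\}$ and $\mathbb{R}^N$ and that $G$ acts by isometries on $\mathbb{S}^N$ fixing $\{\pm e_{N+1}\}$.
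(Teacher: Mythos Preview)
Your proposal is correct and follows exactly the paper's route: the paper's proof is the one-line remark that the result is ``an immediate consequence of Proposition~\ref{Proposition:ExistenceDirichlet} together with Lemma~\ref{Lemma:EquivalentProblemsGinvariant}'', and you have simply spelled this out, transporting least energy solutions through the isometry $\iota_s$ and checking that the Nehari manifolds and energies correspond. One small inaccuracy in your closing remark: $G=O(m)\times O(n)$ does \emph{not} fix $\pm e_{N+1}$ (the $G$-orbit of $-e_{N+1}=(0_m,-e_n)$ is $\{0\}\times\mathbb{S}^{n-1}$), so the justification for adding the south pole should rather be that if $\Omega$ is $G$-invariant and unbounded then, by $G$-invariance, $\sigma^{-1}(\Omega)$ accumulates on the whole singular orbit $\{0\}\times\mathbb{S}^{n-1}$, and one adds that orbit (not just a point) to obtain $U$; this does not affect your main argument.
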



\subsection{Existence of symmetric solutions to systems}
\label{Section:ExistenceSystems}

We now prove the existence of least energy $G$-invariant weak solutions to  systems   \eqref{Eq:SystemR^N} and \eqref{Equation:SystemSphere}. The main result of this section is the following.

\begin{proposition}\label{Proposition:MainFractionalSystems}
	For any $s\in(0,1)$, system~\eqref{Equation:SystemSphere} admits an unbounded sequence of $G$-invariant fully nontrivial solutions. One of them is positive in each of its components and attains~\eqref{Equation:InfimumNehari}. 
\end{proposition}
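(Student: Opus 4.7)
The plan is to adapt the variational framework of Clapp and Szulkin~\cite{ClappSzulkin2019} to the $G$-invariant, nonlocal setting on the sphere. By the Principle of Symmetric Criticality, $G$-invariant critical points of $\mathcal{J}$ are bona fide solutions of \eqref{Equation:SystemSphere}, so it suffices to work with the restriction of $\mathcal{J}$ to $\mathcal{H}_{\mathbb{S}^N}$. The essential compactness input is Lemma~\ref{Lemma:SymmetricSobolevEmbeddings}, which provides a \emph{compact} embedding $H_g^s(\mathbb{S}^N)^G\hookrightarrow L_g^{2_s^\ast}(\mathbb{S}^N)$, counteracting the usual obstructions coming from the critical Sobolev exponent.

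The first step I would carry out is the construction of the fiber map. Using the competitive assumption $\eta_{ij}<0$, I would show that for every $\overline{u}=(u_1,\ldots,u_\ell)\in \mathcal{H}_{\mathbb{S}^N}$ with all components nonzero, the map $(0,\infty)^\ell\ni \overline{s}\mapsto \mathcal{J}(s_1 u_1,\ldots,s_\ell u_\ell)$ admits a unique critical point $\overline{t}(\overline{u})$, which is its strict global maximum, and that $\overline{t}(\overline{u})\cdot \overline{u}\in \mathcal{N}_{\mathbb{S}^N}$. Continuity of $\overline{u}\mapsto \overline{t}(\overline{u})$ then follows from the implicit function theorem together with the uniform positive lower bound on $\|u_i\|_{H_g^s(\mathbb{S}^N)}$ on $\mathcal{N}_{\mathbb{S}^N}$ established earlier. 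Restricting to the product of unit spheres $\mathcal{T}:=\{\overline{u}:\|u_i\|_{H_g^s(\mathbb{S}^N)}=1 \text{ for all }i\}$ and setting $\Psi(\overline{u}):=\mathcal{J}(\overline{t}(\overline{u})\cdot \overline{u})$, the classical Nehari-type reduction yields a bijection between critical points of $\Psi$ on $\mathcal{T}$ and fully nontrivial critical points of $\mathcal{J}|_{\mathcal{H}_{\mathbb{S}^N}}$, preserving critical values.

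To obtain the unbounded sequence, I would use that $\mathcal{J}$ is invariant under the $\mathbb{Z}_2^\ell$-action $(u_1,\ldots,u_\ell)\mapsto (\varepsilon_1 u_1,\ldots,\varepsilon_\ell u_\ell)$ with $\varepsilon_i\in\{\pm 1\}$, and apply equivariant Lusternik-Schnirelmann theory via the Krasnoselskii genus, setting
\[
c_k := \inf\left\{\sup_{\overline{u}\in A}\Psi(\overline{u}) \;:\; A\subset \mathcal{T} \text{ closed, }\mathbb{Z}_2^\ell\text{-invariant, with genus}\geq k\right\}.
\]
The compact embedding in Lemma~\ref{Lemma:SymmetricSobolevEmbeddings} ensures the Palais-Smale condition for $\Psi$ at every positive level, so each $c_k$ is a critical value; since the genus of $\mathcal{T}/\mathbb{Z}_2^\ell$ is infinite, $c_k\to \infty$, and because $\mathcal{J}(\overline{u})=\frac{s}{N}\|\overline{u}\|^2_{\mathcal{H}_{\mathbb{S}^N}}$ on $\mathcal{N}_{\mathbb{S}^N}$, the norms of the corresponding solutions diverge as well. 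The least-energy case corresponds to $c_1=\inf_{\mathcal{N}_{\mathbb{S}^N}}\mathcal{J}$.

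To obtain a positive least energy solution, observe that $(|a|-|b|)^2\leq (a-b)^2$ gives $\||u_i|\|_{H_g^s(\mathbb{S}^N)}\leq \|u_i\|_{H_g^s(\mathbb{S}^N)}$, while the coupling and critical terms in $\mathcal{J}$ depend only on $|u_i|$; hence $\mathcal{J}(\overline{s}\cdot(|u_1|,\ldots,|u_\ell|))\leq \mathcal{J}(\overline{s}\cdot \overline{u})$ for every $\overline{s}\in(0,\infty)^\ell$, so the fiber scaling of $(|u_1|,\ldots,|u_\ell|)$ also achieves $c_1$ and produces a least-energy solution with nonnegative components. Strict positivity then follows from a strong maximum principle for $\mathscr{P}_g^s$: if $w_i\geq 0$, $w_i\not\equiv 0$, vanished at some $z_0\in\mathbb{S}^N$, then the singular integral defining $\mathscr{P}_g^s w_i(z_0)$ would be strictly negative, while the right-hand side of the $i$-th equation vanishes at $z_0$ because $b_{ij}>1$, a contradiction. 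The most delicate steps I anticipate are the existence and uniqueness of the fiber map in the competitive regime and the Palais-Smale analysis at the critical exponent; both are resolved by combining the monotonicity arising from $\eta_{ij}<0$ with the symmetry-induced compactness of Lemma~\ref{Lemma:SymmetricSobolevEmbeddings}.
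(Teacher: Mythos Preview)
Your overall strategy matches the paper's --- reduce to a functional $\Psi$ on a Nehari-type set, use the compact embedding of Lemma~\ref{Lemma:SymmetricSobolevEmbeddings} for Palais--Smale, and run equivariant minimax via the genus --- but there is a genuine gap in the fiber-map step. In the competitive regime $\eta_{ij}<0$ it is \emph{not} true that every $\overline{u}\in\mathcal{T}$ admits $\overline{t}(\overline{u})\in(0,\infty)^\ell$ with $\overline{t}(\overline{u})\cdot\overline{u}\in\mathcal{N}_{\mathbb{S}^N}$. For instance, if $u_1=u_2$ and $|\eta_{12}|$ is large, the coupling term enters $\mathcal{J}$ with a \emph{positive} sign (since $-\eta_{ij}>0$) and is homogeneous of the same total degree $2_s^\ast$ as the pure-power term; along the fiber it can dominate, so $\overline{s}\mapsto\mathcal{J}(\overline{s}\cdot\overline{u})$ is unbounded above and the system $\partial_i\mathcal{J}(\overline{s}\cdot\overline{u})[s_iu_i]=0$ has no solution in $(0,\infty)^\ell$. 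Thus $\Psi$ is only defined on a proper open subset of $\mathcal{T}$, and your genus computation on all of $\mathcal{T}$ does not apply as stated.

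The paper (following Clapp--Szulkin) repairs exactly this point: one sets $\mathcal{U}:=\{\overline{u}\in\mathcal{T}:\overline{t}\,\overline{u}\in\mathcal{N}_{\mathbb{S}^N}\text{ for some }\overline{t}\in(0,\infty)^\ell\}$, proves that $\mathcal{U}$ is open and that $\Psi(\overline{u}_n)\to\infty$ whenever $\overline{u}_n\to\partial\mathcal{U}$ (so sublevel sets are complete and $\mathcal{U}$ is positively invariant under the negative pseudogradient flow), and then shows $\mathrm{genus}(\mathcal{U})=\infty$ by exhibiting, for each $k$, $G$-invariant functions with pairwise disjoint supports, for which the fiber projection trivially exists since the coupling integrals vanish. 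With this correction the rest of your argument --- Palais--Smale via Lemma~\ref{Lemma:SymmetricSobolevEmbeddings}, nonnegativity via $\|\,|u_i|\,\|_{H_g^s}\le\|u_i\|_{H_g^s}$, and strict positivity by the strong maximum principle --- goes through and coincides with the paper's. Your $\mathbb{Z}_2^\ell$-equivariance is available but not needed; the paper uses only the single reflection $\overline{u}\mapsto-\overline{u}$, which already suffices once $\mathrm{genus}(\mathcal{U})=\infty$.
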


Up to minor modifications, the proof of this proposition follows the ideas introduced in \cite{ClappSzulkin2019}, using Lemma~\ref{Lemma:SymmetricSobolevEmbeddings} as in the proof of \cite[Theorem 1.1]{ClappFernandezSaldana2021} to restore compactness. In what follows, we sketch it for the reader's convenience. 

First, there is a projection on $\mathcal{N}_{\mathbb{S}^N}$ from a suitable subset of $\mathcal{H}_{\mathbb{S}^N}$. To define it, given $\overline{u}=(u_1,\ldots,u_\ell)$ and $\overline{t}=(t_1,\ldots,t_\ell)\in(0,\infty)^\ell$, we write
\[
\overline{t}\,\overline{u}:= (t_1u_1,\ldots,t_\ell u_\ell).
\]
Let $\mathcal{S}:=\{u\in H_g^s(\mathbb{S}^N)^G:\|u\|_{H_g^s(\mathbb{S}^N)}=1\}$, define  $\mathcal{T}:=\mathcal S^\ell$ and 
$$\mathcal{U}:=\{\overline{u}\in\mathcal{T}:\overline{t}\,\overline{u}\in\cN_{\mathbb{S}^N}\text{ \ for some \ }\overline{t}\in(0,\infty)^\ell\}.$$

The next result says that this last set is nonempty and that the projection from $\mathcal{U}$ onto $\mathcal{N}_{\mathbb{S}^N}$ defines a homeomorphism. The proof is  exactly the same as the proof of \cite[Proposition 3.1]{ClappSzulkin2019}.

\begin{lemma} \label{lem:U}
	\begin{itemize}
		\item[$(i)$] Let $\overline{u}\in\mathcal{T}$. If there exists $\overline{t}_{\overline{u}}\in(0,\infty)^\ell$ such that $\overline{t}_{\overline{u}}\overline{u}\in\cN$, then $\overline{t}_{\overline{u}}$ is unique and satisfies
		$$\mathcal{J}(\overline{t}_{\overline{u}}\overline{u})=\max_{\overline{t}\in(0,\infty)^\ell}\mathcal{J}(\overline{t}\,\overline{u}).$$
		\item[$(ii)$] $\mathcal{U}$ is a nonempty open subset of $\mathcal{T}$, and the map $\mathcal{U}\to(0,\infty)^\ell$ given by $\overline{u}\mapsto\overline{t}_{\overline{u}}$ is continuous.
		\item[$(iii)$] The map $\rho:\mathcal{U}\to \cN$ given by $\overline{u}\mapsto\overline{t}_{\overline{u}}\overline{u}$ is a homeomorphism.
		\item[$(iv)$] If $(\overline{u}_n)$ is a sequence in $\mathcal{U}$ and $\overline{u}_n\to\overline{u}\in\partial\mathcal{U}$, then $|\overline{t}_{\overline{u}_n}|\to\infty$.
	\end{itemize}
\end{lemma}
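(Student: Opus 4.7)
The result is the standard Nehari-manifold projection for weakly coupled critical systems with competitive interactions. I follow the scheme of \cite[Proposition 3.1]{ClappSzulkin2019}, adjusted to the critical exponent $2_s^*$ via the Sobolev embedding in Lemma~\ref{Lemma:SymmetricSobolevEmbeddings}. For $\overline{u} = (u_1,\ldots,u_\ell) \in \mathcal{T}$, set $\alpha_i := \int_{\SN} |u_i|^{2_s^*}\, dV_g > 0$ and $\beta_{ij} := \int_{\SN} |u_j|^{a_{ij}}|u_i|^{b_{ij}}\, dV_g \geq 0$, and define
\begin{equation*}
\Psi_{\overline{u}}(\overline{t}) := \mathcal{J}(\overline{t}\, \overline{u}) = \tfrac{1}{2} \sum_i t_i^2 - \tfrac{1}{2_s^*}\sum_i \alpha_i t_i^{2_s^*} - \tfrac{1}{2} \sum_{i\neq j} \eta_{ij}\beta_{ij}\, t_i^{b_{ij}} t_j^{a_{ij}}.
\end{equation*}
Then $\overline{t}\, \overline{u} \in \mathcal{N}_{\mathbb{S}^N}$ if and only if $\nabla \Psi_{\overline{u}}(\overline{t})=0$.

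For (i), suppose $\overline{t}^{(1)}, \overline{t}^{(2)} \in (0,\infty)^\ell$ are both critical points, and let $k$ achieve $\lambda := \max_i t_i^{(1)}/t_i^{(2)}$. Dividing the $k$-th Nehari identity by $t_k^{2_s^*-1}$ and using $b_{kj}-2_s^* = -a_{kj}$ gives, for $m \in \{1,2\}$,
\begin{equation*}
(t_k^{(m)})^{2-2_s^*} = \alpha_k + \sum_{j\neq k} \eta_{kj} b_{kj}\beta_{kj}\, (t_j^{(m)}/t_k^{(m)})^{a_{kj}}.
\end{equation*}
The maximality of $\lambda$ forces $t_j^{(1)}/t_k^{(1)} \leq t_j^{(2)}/t_k^{(2)}$ for every $j$; combined with $\eta_{kj}b_{kj}\beta_{kj}\leq 0$, the right-hand side of the subtracted equations is nonnegative, while the left-hand side is strictly negative when $\lambda > 1$ (using $2-2_s^*<0$). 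Hence $\lambda \leq 1$, and symmetrically $\lambda \geq 1$, so $\overline{t}^{(1)}=\overline{t}^{(2)}$. The maximization identity then follows by summing the Nehari relations over $i$ to obtain $|\overline{t}_{\overline{u}}|^2 = \sum_i \alpha_i (t_{\overline{u}})_i^{2_s^*} + \tfrac{2_s^*}{2}\sum_{i\neq j}\eta_{ij}\beta_{ij}(t_{\overline{u}})_i^{b_{ij}}(t_{\overline{u}})_j^{a_{ij}}$, which reduces $t \mapsto \Psi_{\overline{u}}(t\,\overline{t}_{\overline{u}})$ to the one-variable function $A\bigl(\tfrac{t^2}{2} - \tfrac{t^{2_s^*}}{2_s^*}\bigr)$ with $A := |\overline{t}_{\overline{u}}|^2>0$, whose unique positive critical point $t=1$ is a global maximum.

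For (ii), nonemptiness is obtained by taking $\overline{u}$ with components supported in $\ell$ pairwise disjoint $G$-invariant open subsets of $\SN$ (built from $\ell$ disjoint open intervals in $(-1,1)$ via the orbit parameter $f$ of Proposition~\ref{Proposition:RestrictedEigenfunctions:intro}); then $\beta_{ij}\equiv 0$, $\Psi_{\overline{u}}$ decouples, and the explicit projections $t_k^{(0)} = \alpha_k^{-1/(2_s^*-2)}$ exist. Openness of $\mathcal{U}$ in $\mathcal{T}$ and continuity of $\overline{u}\mapsto \overline{t}_{\overline{u}}$ follow from the Implicit Function Theorem applied to $(\overline{u},\overline{t})\mapsto \nabla_{\overline{t}}\Psi_{\overline{u}}(\overline{t})$, whose Jacobian in $\overline{t}$ at $\overline{t}_{\overline{u}}$ is the Hessian $D^2\Psi_{\overline{u}}(\overline{t}_{\overline{u}})$, nonsingular by the uniqueness in (i). Part (iii) is then immediate: $\rho$ is continuous by (ii), bijective by (i), and its inverse $\overline{w}\mapsto (w_i/\|w_i\|_{H_g^s(\SN)})_i$ is well-defined and continuous on $\mathcal{N}_{\mathbb{S}^N}$ thanks to Lemma~\ref{lem:away_froM_{d/2}}.

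For (iv), suppose for contradiction that $\overline{t}_{\overline{u}_n}$ stays bounded, so that along a subsequence $\overline{t}_{\overline{u}_n}\to \overline{t}_\ast \in [0,\infty)^\ell$. For each $k$, dividing the $k$-th Nehari identity by $t_{n,k}^2$ and discarding the nonpositive coupling sum (using $\eta_{ij}<0$) yields $\alpha_{n,k}\, t_{n,k}^{2_s^*-2}\geq 1$, and $\alpha_{n,k}$ is uniformly bounded by the critical Sobolev embedding of Lemma~\ref{Lemma:SymmetricSobolevEmbeddings}. Hence $t_{n,k}$ stays uniformly away from zero, so $\overline{t}_\ast \in (0,\infty)^\ell$; by continuity $\overline{t}_\ast\, \overline{u} \in \mathcal{N}_{\mathbb{S}^N}$, whence $\overline{u} \in \mathcal{U}$, contradicting $\overline{u}\in\partial\mathcal{U}$ and the openness of $\mathcal{U}$. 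The main obstacle is the uniqueness in (i): the interplay between $\eta_{ij}<0$, $a_{ij}+b_{ij}=2_s^*>2$, and the ratio maximization is what drives the sign contradiction; once this is in place, the remaining items are routine.
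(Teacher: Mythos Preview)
Your outline follows exactly the route the paper takes (it simply cites \cite[Proposition 3.1]{ClappSzulkin2019}), and the uniqueness argument in (i), the construction in (ii) for nonemptiness, as well as (iii) and (iv), are all fine. However, two steps are not justified as written.

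First, in (i) the one–variable reduction along the ray $t\mapsto t\,\overline{t}_{\overline{u}}$ only shows that $\overline{t}_{\overline{u}}$ is a maximum \emph{along that ray}; it does not give the global maximum over $(0,\infty)^\ell$. A direct fix is as follows: writing $\overline{s}=(r_1\tau_1,\ldots,r_\ell\tau_\ell)$ with $\tau:=\overline{t}_{\overline{u}}$ and using the Nehari identities at $\tau$ to eliminate $\alpha_i\tau_i^{2_s^*}$, one gets
\[
\Psi_{\overline{u}}(\overline{s})-\Psi_{\overline{u}}(\tau)=\sum_i\tau_i^2\Big(\tfrac{r_i^2}{2}-\tfrac{r_i^{2_s^*}}{2_s^*}-\tfrac{s}{N}\Big)+\sum_{\{i,j\}}C_{ij}\Big(r_i^{b_{ij}}r_j^{a_{ij}}-\tfrac{b_{ij}}{2_s^*}r_i^{2_s^*}-\tfrac{a_{ij}}{2_s^*}r_j^{2_s^*}\Big),
\]
with $C_{ij}:=-\eta_{ij}\beta_{ij}\tau_i^{b_{ij}}\tau_j^{a_{ij}}\ge 0$. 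The first bracket is $\le 0$ since $r\mapsto \tfrac{r^2}{2}-\tfrac{r^{2_s^*}}{2_s^*}$ is maximized at $r=1$, and the second is $\le 0$ by Young's inequality (weighted AM–GM) because $\tfrac{b_{ij}}{2_s^*}+\tfrac{a_{ij}}{2_s^*}=1$. This yields the global maximum.

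Second, in (ii) you claim the Hessian $D^2\Psi_{\overline{u}}(\overline{t}_{\overline{u}})$ is nonsingular ``by the uniqueness in (i)''. Uniqueness of a critical point does not imply nondegeneracy (e.g.\ $t\mapsto t^4$). You must compute the Hessian directly: conjugating by $\mathrm{diag}(\tau)$ and using the Nehari identities, the diagonal entries become $(2-2_s^*)\tau_i^2+\sum_{j\neq i}\eta_{ij}a_{ij}b_{ij}\beta_{ij}\tau_i^{b_{ij}}\tau_j^{a_{ij}}<0$ and the off–diagonal entries are $-\eta_{ij}a_{ij}b_{ij}\beta_{ij}\tau_i^{b_{ij}}\tau_j^{a_{ij}}\ge 0$, with row sums $(2-2_s^*)\tau_i^2<0$. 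Strict diagonal dominance then gives that the Hessian is negative definite, hence nonsingular, and the Implicit Function Theorem applies as you intend.
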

\medskip

The main difficulty to perform variational methods on $\mathcal{N}_{\mathbb{S}^N}$, is that it is not clear if this set is a differentiable manifold where we can apply the negative gradient flow of $\mathcal{J}$. However, by the previous lemma, the set $\mathcal{U}$ is an open subset of $\mathcal{T}$, which is a differentiable manifold and, therefore, $\mathcal{U}$ is also a differentiable manifold which is homeomorphic to the set $\mathcal{N}_{\mathbb{S}^N}$. In what follows, we define a functional $\Psi$ which translates the variational structure encoded in $\mathcal{J}$ and $\mathcal{N}_{\mathbb{S}^N}$ to $\mathcal{U}$, where we can apply the negative gradient flow. More precisely,  Palais-Smale sequences and critical points for $\mathcal{J}$ in $\mathcal{N}_{\mathbb{S}^N}$ induce Palais-Smale sequences and critical points for $\Psi$ in $\mathcal{U}$ and vice versa. 

Define $\Psi:\mathcal{U}\to\mathbb{R}$ as 
\begin{equation*}
	\Psi(\overline{u}): = \mathcal{J}(\overline{t}_{\overline{u}}\overline{u}).
\end{equation*}

According to item (ii) of Lemma~\ref{lem:U}, $\mathcal{U}$ is a Hilbert manifold. When $\Psi$ is differentiable at $\overline{u}$, we write $\|\Psi'(\overline{u})\|_{\bigstar}$ for the norm of $\Psi'(\overline{u})$ in the cotangent space $\mathrm{T}_{\overline{u}}^*(\mathcal{U})$ to $\mathcal{U}$ at $\overline{u}$, i.e.,
\[
\|\Psi'(\overline{u})\|_{\bigstar}:=\sup\limits_{\substack{\overline{v}\in\mathrm{T}_{\overline{u}}(\mathcal{U}) \\\overline{v}\neq 0}}\frac{|\Psi'(\overline{u})\overline{v}|}{\|\overline{v}\|_{\mathcal{H}_{\mathbb{S}^N}}},
\]
where $\mathrm{T}_{\overline{u}}(\mathcal{U})$ is the tangent space to $\mathcal{U}$ at $\overline{u}$ \cite[Section III.4]{LangBook}.

Recall that a sequence $(\overline{u}_n)$ in $\mathcal{U}$ is called a $(PS)_c$\emph{-sequence for} $\Psi$ if $\Psi(\overline{u}_n)\to c$ and $\|\Psi'(\overline{u}_n)\|_{\bigstar}\to 0$, and $\Psi$ is said to satisfy the $(PS)_c$\emph{-condition} if every such sequence has a convergent subsequence. Similarly, a $(PS)_c$\emph{-sequence for} $\mathcal{J}$ is a sequence $(\overline{u}_n)$ in $\mathcal{H}_{\mathbb{S}^N}$ such that $\mathcal{J}(\overline{u}_n)\to 0$ and $\|\mathcal{J}'(\overline{u}_n)\|_{\mathcal{H}_{\mathbb{S}^N}'}\to 0$, and $\mathcal{J}$ satisfies the $(PS)_c$\emph{-condition} if any such sequence has a convergent subsequence.   Here, as usual, $\mathcal{H}_{\mathbb{S}^N}'$ denotes the dual space of $\mathcal{H}_{\mathbb{S}^N}$. The next lemma can be argued exactly as in \cite[Theorem 3.3]{ClappSzulkin2019}, and we omit its proof.

\begin{lemma}\label{Lemma:Psi}
	\begin{itemize}
		\item[$(i)$] $\Psi\in  C^1(\mathcal{U})$ and its derivative is given by
		\begin{equation*}
			\Psi'(\overline{u})\overline{v} = \mathcal{J}'(\overline{t}_{\overline{u}}\overline{u})[\overline{t}_{\overline{u}}\overline{v}] \quad \text{for all } \overline{u}\in\mathcal{U} \text{ and }\overline{v}\in \mathrm{T}_{\overline{u}}(\mathcal{U}).
		\end{equation*}
		Moreover, there exists $d_1>0$ such that
		$$d_1\min_i\{t_{u,i}\}  \|\mathcal{J}'(\overline{t}_{\overline{u}}\overline{u})\|_{\mathcal{H}_{\mathbb{S}^N}'}\leq\|\Psi'(\overline{u})\|_{\bigstar}\leq \max_i\{t_{u,i}\} \|\mathcal{J}'(\overline{t}_{\overline{u}}\overline{u})\|_{\mathcal{H}_{\mathbb{S}^N}'}\quad \text{for all } \overline{u}\in\mathcal{U}.$$
		\item[$(ii)$] If $(\overline{u}_n)$ is a $(PS)_c$-sequence for $\Psi$ in $\mathcal{U}$, then $(\overline{t}_{\overline{u}_n}\overline{u}_n)$ is a $(PS)_c$-sequence for $\mathcal{J}$ in $\mathcal{H}_{\mathbb{S}^N}$.
		\item[$(iii)$] $\overline{u}$ is a critical point of $\Psi$ if and only if $\overline{t}_{\overline{u}}\overline{u}$ is a fully nontrivial critical point of $\mathcal{J}$.
		\item[$(iv)$] If $(\overline{u}_n)$ is a sequence in $\mathcal{U}$ and $\overline{u}_n\to\overline{u}\in\partial\mathcal{U}$, then $|\Psi(\overline{u}_n)|\to\infty$.
		\item[$(v)$]$\overline{u}\in\mathcal{U}$ if and only if $-\overline{u}\in\mathcal{U}$, and $\Psi(\overline{u})=\Psi(-\overline{u})$.
	\end{itemize}
\end{lemma}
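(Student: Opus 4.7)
The proof proposal follows the template of \cite[Theorem 3.3]{ClappSzulkin2019}, but I will organize it so that each item of the lemma is reduced to a transparent verification using only Lemma~\ref{lem:U} and the Nehari identity $\partial_i\mathcal{J}(\overline{t}_{\overline{u}}\overline{u})u_i = 0$ for $i=1,\dots,\ell$.

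The plan for \emph{(i)} is to first upgrade the continuity of $\overline{u}\mapsto \overline{t}_{\overline{u}}$ (Lemma~\ref{lem:U}(ii)) to $C^1$ regularity via the implicit function theorem applied to the map $F:\mathcal{T}\times(0,\infty)^\ell\to\mathbb{R}^\ell$ defined by $F_i(\overline{u},\overline{t}) = \partial_i\mathcal{J}(\overline{t}\,\overline{u})u_i$. The nondegeneracy of $\partial_{\overline{t}}F$ at $(\overline{u},\overline{t}_{\overline{u}})$ follows from the fact that $\overline{t}\mapsto \mathcal{J}(\overline{t}\,\overline{u})$ attains a strict maximum at $\overline{t}_{\overline{u}}$ (Lemma~\ref{lem:U}(i)) and the special algebraic structure of the cooperative-competitive critical nonlinearity, exactly as in \cite{ClappSzulkin2019}. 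Once $\overline{u}\mapsto\overline{t}_{\overline{u}}$ is $C^1$, chain rule gives $\Psi'(\overline{u})\overline{v} = \mathcal{J}'(\overline{t}_{\overline{u}}\overline{u})[\overline{s}\,\overline{u}+\overline{t}_{\overline{u}}\overline{v}]$ where $\overline{s}$ is the derivative of $\overline{u}\mapsto \overline{t}_{\overline{u}}$ along $\overline{v}\in T_{\overline{u}}\mathcal{T}$. The Nehari identity makes the $\overline{s}\,\overline{u}$ term vanish componentwise since $\mathcal{J}'(\overline{t}_{\overline{u}}\overline{u})[s_iu_i\overline{e}_i]=(s_i/t_{u,i})\,\partial_i\mathcal{J}(\overline{t}_{\overline{u}}\overline{u})[t_{u,i}u_i]=0$, yielding the announced formula. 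The norm bounds follow by writing $\Psi'(\overline{u})\overline{v}=\sum_i t_{u,i}\,\partial_i\mathcal{J}(\overline{t}_{\overline{u}}\overline{u})v_i$ and estimating both sides: an upper bound comes from Cauchy--Schwarz with $\max_i t_{u,i}$, and the lower bound comes from the orthogonal decomposition described below.

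For \emph{(ii)}, the key observation is the splitting $\overline{w}=\overline{v}+\overline{\sigma}\,\overline{u}$, with $\sigma_i=\langle u_i,w_i\rangle_{H_g^s(\mathbb{S}^N)}$ and $v_i=w_i-\sigma_iu_i\in (\mathbb{R}u_i)^\perp$, so that $\overline{v}\in T_{\overline{u}}\mathcal{T}=T_{\overline{u}}\mathcal{U}$ (by openness of $\mathcal{U}$ in $\mathcal{T}$). The Nehari identity kills the $\overline{\sigma}\,\overline{u}$ part, giving $\mathcal{J}'(\overline{t}_{\overline{u}}\overline{u})\overline{w}=\mathcal{J}'(\overline{t}_{\overline{u}}\overline{u})\overline{v}=\Psi'(\overline{u})[\overline{t}_{\overline{u}}^{-1}\overline{v}]$. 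Since $\|\overline{t}_{\overline{u}}^{-1}\overline{v}\|_{\mathcal{H}_{\mathbb{S}^N}}\leq (\min_i t_{u,i})^{-1}\|\overline{w}\|_{\mathcal{H}_{\mathbb{S}^N}}$, this produces the lower bound in \emph{(i)} and gives $\|\mathcal{J}'(\overline{t}_{\overline{u}_n}\overline{u}_n)\|_{\mathcal{H}_{\mathbb{S}^N}'}\to 0$; combined with $\Psi(\overline{u}_n)=\mathcal{J}(\overline{t}_{\overline{u}_n}\overline{u}_n)\to c$, this is the $(PS)_c$ condition for $\mathcal{J}$. Part \emph{(iii)} is then immediate: $\Psi'(\overline{u})=0$ on $T_{\overline{u}}\mathcal{U}$ iff $\mathcal{J}'(\overline{t}_{\overline{u}}\overline{u})\overline{v}=0$ for all $\overline{v}\in T_{\overline{u}}\mathcal{T}$, and by the same orthogonal decomposition iff $\mathcal{J}'(\overline{t}_{\overline{u}}\overline{u})=0$ on the whole of $\mathcal{H}_{\mathbb{S}^N}$; full nontriviality is automatic because $t_{u,i}>0$ and $\|u_i\|_{H_g^s(\mathbb{S}^N)}=1$.

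Part \emph{(iv)} uses that for any $\overline{v}\in\mathcal{N}_{\mathbb{S}^N}$ one has $\mathcal{J}(\overline{v})=\frac{s}{N}\sum_{i=1}^\ell \|v_i\|_{H_g^s(\mathbb{S}^N)}^2$, which applied to $\overline{v}=\overline{t}_{\overline{u}_n}\overline{u}_n$ and combined with Lemma~\ref{lem:U}(iv) yields $\Psi(\overline{u}_n)=\frac{s}{N}|\overline{t}_{\overline{u}_n}|^2\to\infty$. Finally, \emph{(v)} follows since $\mathcal{J}$ is even in $\overline{u}$ (all nonlinear integrands involve even powers $|\cdot|^{2_s^\ast}$ and $|\cdot|^{a_{ij}}|\cdot|^{b_{ij}}$); hence $\overline{t}\,\overline{u}\in\mathcal{N}_{\mathbb{S}^N}\iff -\overline{t}\,\overline{u}\in\mathcal{N}_{\mathbb{S}^N}\iff\overline{t}(-\overline{u})\in\mathcal{N}_{\mathbb{S}^N}$, giving $\overline{t}_{-\overline{u}}=\overline{t}_{\overline{u}}$ and $\Psi(-\overline{u})=\mathcal{J}(-\overline{t}_{\overline{u}}\overline{u})=\mathcal{J}(\overline{t}_{\overline{u}}\overline{u})=\Psi(\overline{u})$. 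The main obstacle in the whole argument is establishing the $C^1$-smoothness of $\overline{u}\mapsto\overline{t}_{\overline{u}}$ via the implicit function theorem in \emph{(i)}; everything else is an algebraic consequence of the Nehari identities together with Lemma~\ref{lem:U}.
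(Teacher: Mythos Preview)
Your proposal is correct and follows exactly the approach the paper itself adopts: the paper omits the proof and refers to \cite[Theorem 3.3]{ClappSzulkin2019}, whose argument you have faithfully reproduced (implicit function theorem for the $C^1$-regularity of $\overline{u}\mapsto\overline{t}_{\overline{u}}$, cancellation of the $\overline{s}\,\overline{u}$ term via the Nehari identities, and the orthogonal splitting $\overline{w}=\overline{v}+\overline{\sigma}\,\overline{u}$ to compare the two norms). One small point worth making explicit in part~\emph{(ii)}: to conclude $\|\mathcal{J}'(\overline{t}_{\overline{u}_n}\overline{u}_n)\|_{\mathcal{H}_{\mathbb{S}^N}'}\to 0$ from the lower bound in~\emph{(i)} you need $\min_i t_{u_n,i}$ bounded away from zero, which follows from Lemma~\ref{lem:away_froM_{d/2}} since $\|u_{n,i}\|_{H_g^s(\mathbb{S}^N)}=1$ gives $t_{u_n,i}=\|t_{u_n,i}u_{n,i}\|_{H_g^s(\mathbb{S}^N)}\geq d_0$.
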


Next, we use Lemma~\ref{Lemma:SymmetricSobolevEmbeddings} to restore compactness of the Palais-Smale sequences.

\begin{lemma}
For every $c\in\mathbb{R}$,	$\Psi$ satisfies the $(PS)_c$-condition.
\end{lemma}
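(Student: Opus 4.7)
The plan is to transfer the Palais–Smale condition from $\Psi$ on $\mathcal{U}$ to $\mathcal{J}$ on $\mathcal{N}_{\mathbb{S}^N}$ via Lemma~\ref{Lemma:Psi}(ii), and then to exploit the symmetry-induced compact Sobolev embedding of Lemma~\ref{Lemma:SymmetricSobolevEmbeddings} to restore compactness at the critical exponent. So given a $(PS)_c$-sequence $(\overline{u}_n)\subset \mathcal{U}$ for $\Psi$, I would set $\overline{w}_n:=\overline{t}_{\overline{u}_n}\overline{u}_n\in\mathcal{N}_{\mathbb{S}^N}$; Lemma~\ref{Lemma:Psi}(ii) then gives that $(\overline{w}_n)$ is a $(PS)_c$-sequence for $\mathcal{J}$ in $\mathcal{H}_{\mathbb{S}^N}$. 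The identity $\mathcal{J}|_{\mathcal{N}_{\mathbb{S}^N}}=\frac{s}{N}\|\cdot\|^2_{\mathcal{H}_{\mathbb{S}^N}}$ then immediately yields that $(\overline{w}_n)$ is bounded in $\mathcal{H}_{\mathbb{S}^N}$.

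Next, up to a subsequence, $w_{n,i}\rightharpoonup w_i$ weakly in $H_g^s(\mathbb{S}^N)^G$ for each $i$. The crucial step is that, because $m,n\geq 2$, the compact embedding $H_g^s(\mathbb{S}^N)^G\hookrightarrow L^{2^\ast_s}_g(\mathbb{S}^N)$ given by Lemma~\ref{Lemma:SymmetricSobolevEmbeddings} upgrades this to strong convergence $w_{n,i}\to w_i$ in $L^{2^\ast_s}_g(\mathbb{S}^N)$. Testing the identities $\partial_i\mathcal{J}(\overline{w}_n)(w_{n,i}-w_i)=o(1)$ and applying Hölder's inequality (using $a_{ij}+b_{ij}=2_s^\ast$ for the coupling terms), both the purely critical integral and the mixed coupling integrals vanish in the limit, so that
\begin{equation*}
\langle w_{n,i},w_{n,i}-w_i\rangle_{H_g^s(\mathbb{S}^N)}\to 0.
\end{equation*}
Combined with the weak convergence, this forces $\|w_{n,i}\|_{H_g^s(\mathbb{S}^N)}\to \|w_i\|_{H_g^s(\mathbb{S}^N)}$, which in a Hilbert space is equivalent to strong convergence $w_{n,i}\to w_i$ in $H_g^s(\mathbb{S}^N)$.

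Third, I would verify that $\overline{u}_n\to\overline{u}$ in $\mathcal{U}$. Lemma~\ref{lem:away_froM_{d/2}} provides $\|w_{n,i}\|_{H_g^s(\mathbb{S}^N)}\geq d_0>0$ for each $i$, so by the strong convergence $\|w_i\|_{H_g^s(\mathbb{S}^N)}\geq d_0$; in particular every $w_i\neq 0$, and passing to the limit in $\mathcal{J}'(\overline{w}_n)\to 0$ shows that $\overline{w}=(w_1,\ldots,w_\ell)\in\mathcal{N}_{\mathbb{S}^N}$. Since $\|u_{n,i}\|_{H_g^s(\mathbb{S}^N)}=1$, the scalars $t_{\overline{u}_n,i}=\|w_{n,i}\|_{H_g^s(\mathbb{S}^N)}\to \|w_i\|_{H_g^s(\mathbb{S}^N)}>0$, and therefore
\begin{equation*}
u_{n,i}=\frac{w_{n,i}}{t_{\overline{u}_n,i}}\longrightarrow \frac{w_i}{\|w_i\|_{H_g^s(\mathbb{S}^N)}}=:u_i \quad \text{in } H_g^s(\mathbb{S}^N).
\end{equation*}
Setting $\overline{u}:=(u_1,\ldots,u_\ell)\in\mathcal{T}$, we have $\rho(\overline{u})=\overline{w}\in\mathcal{N}_{\mathbb{S}^N}$, whence $\overline{u}\in\mathcal{U}$ and $\overline{u}_n\to\overline{u}$ in $\mathcal{U}$, as required.

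The main obstacle would be the critical Sobolev exponent: for a general $(PS)_c$-sequence of $\mathcal{J}$ in $\mathcal{H}_{\mathbb{S}^N}$ without symmetries, concentration/bubbling phenomena at positive levels generally prevent both the $L^{2_s^\ast}_g$-strong convergence and the passage to the limit in the critical nonlinearity. The entire argument hinges on the enhanced compactness supplied by Lemma~\ref{Lemma:SymmetricSobolevEmbeddings}, which uses the positive minimal orbit dimension $\kappa\geq 1$ of the action of $G=O(m)\times O(n)$ (as $m,n\geq 2$) to render $2_s^\ast=2^\ast_{N,s}$ strictly subcritical on the $G$-symmetric subspace.
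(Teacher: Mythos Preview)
Your proof is correct and follows essentially the same strategy as the paper's: transfer the $(PS)_c$-sequence from $\Psi$ to $\mathcal{J}$ on $\mathcal{N}_{\mathbb{S}^N}$ via Lemma~\ref{Lemma:Psi}(ii), show boundedness, and then use the compact embedding of Lemma~\ref{Lemma:SymmetricSobolevEmbeddings} to upgrade weak to strong convergence. The only differences are cosmetic: where the paper invokes a ``standard argument'' (citing \cite[Proposition 3.6]{ClappPistoia2018}) for the passage from strong $L^{2_s^\ast}$-convergence to strong $H_g^s$-convergence, you spell it out explicitly by testing $\partial_i\mathcal{J}(\overline{w}_n)$ against $w_{n,i}-w_i$; and where the paper concludes by appealing to the closedness of $\mathcal{N}_{\mathbb{S}^N}$ together with the fact that $\rho$ is a homeomorphism (Lemma~\ref{lem:U}(iii)), you instead compute the limit of $u_{n,i}=w_{n,i}/t_{\overline{u}_n,i}$ directly and check $\overline{u}\in\mathcal{U}$ by hand.
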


\begin{proof}
Let $\rho$ be as in Lemma \ref{lem:U}. 	First, observe that if $(\overline{v}_n)$ is a $(PS)_c$-sequence for $\mathcal{J}$ in $\mathcal{N}_{\mathbb{S}^N}$, there exists $C>0$ such that
	\[
	\frac{s}{N}\Vert \overline{v}_n \Vert^2_{\mathcal{H}_{\mathbb{S}^N}} = \mathcal{J}(\overline{v}_n) - \frac{1}{2_{s}^\ast} \mathcal{J}'(\overline{v}_n)\overline{v}_n \leq C(1 + \Vert \overline{v}_n\Vert_{\mathcal{H}_{\mathbb{S}^N}}),
	\]
	and it follows that $(\overline{v}_n)$ is bounded in $\mathcal{H}_{\mathbb{S}^N}$. Now, let $(\overline{u}_n)$ be a $(PS)_c$-sequence for $\Psi$ in $\mathcal{U}$. By Lemma~\ref{Lemma:Psi}, the sequence $\overline{v}_n:=\rho(\overline{u})\in\mathcal{N}_{\mathbb{S}^N}$ is a $(PS)_c$-sequence for $\mathcal{J}$ and it is bounded by the above claim. A standard argument using Lemma~\ref{Lemma:SymmetricSobolevEmbeddings} as in \cite[Proposition 3.6]{ClappPistoia2018}, shows that $(\overline{v}_n)$ contains a convergent subsequence, converging to some  $\overline{v}\in\mathcal{H}_{\mathbb{S}^N}$. As $\overline{v}_n\in\mathcal{N}_{\mathbb{S}^N}$ for every $n\in\mathbb{N}$ and as $\mathcal{N}_{\mathbb{S}^N}$ is closed in $\mathcal{H}_{\mathbb{S}^N}$, it follows that $\overline{v}\in\mathcal{N}_{\mathbb{S}^N}$. Finally, since $\rho$ is a homeomorphism between $\mathcal{N}_{\mathbb{S}^N}$ and $\mathcal{U}$, this yields that $\overline{u}_n$ converges to $\rho^{-1}(\overline{v})$ in a subsequence, and $\Psi$ satisfies the $(PS)_c$-condition.
\end{proof}

Given a nonempty subset $\mathcal{V}$ of $\mathcal{T}$ such that $\overline{u}\in\mathcal{V}$ if and only if $-\overline{u}\in\mathcal{V}$, the \emph{genus of $\mathcal{V}$}, denoted by $\mathrm{genus}(\mathcal{V})$, is the smallest integer $k\geq 1$ such that there exists an odd continuous function $\mathcal{V}\rightarrow\mathbb{S}^{k-1}$ into the unit sphere $\mathbb{S}^{k-1}$ in $\mathbb{R}^k$. If no such $k$ exists, we define $\mathrm{genus}(\mathcal{V})=\infty$; finally, we set $\mathrm{genus}(\emptyset)=0$.

\begin{lemma}
	$\mathrm{genus}(\mathcal{U})=\infty$.
\end{lemma}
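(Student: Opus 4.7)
The plan is to establish $\mathrm{genus}(\mathcal{U}) \geq k$ for every $k \in \mathbb{N}$ by constructing an odd continuous map $\Phi_k \colon \mathbb{S}^{k-1} \to \mathcal{U}$; letting $k\to\infty$ then gives $\mathrm{genus}(\mathcal{U}) = \infty$. The key observation is that, thanks to Section~\ref{Section:SymmetricSobolevSPaces}, $G$-invariant functions on $\mathbb{S}^N$ are essentially one-dimensional via the quotient map $f$ from~\eqref{fdef}, whose level sets are precisely the $G$-orbits. This makes it easy to produce arbitrarily many nontrivial $G$-invariant smooth functions with pairwise disjoint supports.

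Concretely, for given $k \in \mathbb{N}$, I would pick $k\ell$ pairwise disjoint open subintervals $I_{i,j} \subset (-1,1)$ (with $i = 1, \ldots, \ell$ and $j = 1, \ldots, k$), choose a nontrivial $w_{i,j} \in C_c^\infty(I_{i,j})$ for each pair, and set
\begin{equation*}
\varphi_{i,j} := w_{i,j} \circ f \in C^\infty(\mathbb{S}^N)^G.
\end{equation*}
These $k\ell$ functions are nontrivial and their supports are pairwise disjoint. For $\sigma = (\sigma_1, \ldots, \sigma_k) \in \mathbb{S}^{k-1}$ and $i = 1, \ldots, \ell$, set $u_i(\sigma) := \sum_{j=1}^k \sigma_j \varphi_{i,j}$. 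Because, for each fixed $i$, the functions $\varphi_{i,1},\ldots,\varphi_{i,k}$ have pairwise disjoint supports, they are linearly independent; hence $u_i(\sigma) \neq 0$ for every $\sigma \in \mathbb{S}^{k-1}$, and the map
\begin{equation*}
\Phi_k(\sigma) := \left(\frac{u_1(\sigma)}{\|u_1(\sigma)\|_{H_g^s(\mathbb{S}^N)}}, \ldots, \frac{u_\ell(\sigma)}{\|u_\ell(\sigma)\|_{H_g^s(\mathbb{S}^N)}}\right) \in \mathcal{T}
\end{equation*}
is well defined, continuous, and odd.

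It remains to verify that $\Phi_k(\sigma) \in \mathcal{U}$. Since the supports of $u_1(\sigma), \ldots, u_\ell(\sigma)$ are pairwise disjoint, every coupling integral $\int_{\mathbb{S}^N} |u_i(\sigma)|^{a_{ij}} |u_j(\sigma)|^{b_{ij}}\,dV_g$ vanishes for $i \neq j$. Consequently, the system of equations for $\overline{t} \in (0,\infty)^\ell$ characterizing $\overline{t}\,\Phi_k(\sigma) \in \mathcal{N}_{\mathbb{S}^N}$ decouples into $\ell$ scalar equations of the form $t_i^2 = t_i^{2_s^\ast}\int_{\mathbb{S}^N}|u_i(\sigma)/\|u_i(\sigma)\|_{H_g^s(\mathbb{S}^N)}|^{2_s^\ast}\,dV_g$, each with a unique positive solution. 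Hence $\Phi_k(\sigma) \in \mathcal{U}$ and $\mathrm{genus}(\mathcal{U}) \geq k$, as desired.

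No serious obstacle is anticipated; the only subtle point, in contrast with the local case, is that disjoint supports do \emph{not} give $H_g^s$-orthogonality because of the nonlocal double integral in $\|\cdot\|_{H_g^s(\mathbb{S}^N)}$. This is irrelevant to the argument, however, since the coupling terms appearing in $\mathcal{J}$ are purely local $L^p$-integrals, and the component-wise normalization step takes care of placing each entry of $\Phi_k(\sigma)$ on the unit sphere $\mathcal{S}$.
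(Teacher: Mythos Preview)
Your proposal is correct and follows essentially the same approach as the paper: one produces arbitrarily many $G$-invariant smooth functions with pairwise disjoint supports and then runs the standard Clapp--Szulkin odd-embedding argument. The only cosmetic difference is that the paper invokes $G$-invariant partitions of unity (Palais) to obtain the disjoint-support functions and then cites \cite[Lemma 4.5]{ClappSzulkin2019}, whereas you construct them explicitly via the quotient map $f$ and spell out the decoupling step verifying $\Phi_k(\sigma)\in\mathcal{U}$.
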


\begin{proof}
	Since $\dim Gz\geq 1$ for any $z\in\mathbb{S}^N$, the existence of $G$-invariant partitions of the unity (see \cite{Palais1961}), guarantees the existence of arbitrarily large number of positive $G$-invariant functions in $  C^\infty(\mathbb{S}^N)$ with mutually disjoint supports. Then, arguing as in \cite[Lemma 4.5]{ClappSzulkin2019}, one shows that $\mathrm{genus}(\mathcal{U})=\infty$, as we wanted.
\end{proof}

We next prove that the least energy fully nontrivial $G$-invariant solutions can be taken to be nonnegative in each of its components.

\begin{lemma} \label{Lemma:PositiveLeastEnergySolution}
For $s\in(0,1)$, if $\inf_{\mathcal{N}_{\mathbb{S}^N}}\mathcal{J}=\mathcal{J}(\overline{u})$ for some $\overline{u}\in\mathcal{N}_{\mathbb{S}^N}$, then  $\vert \overline{u}\vert:= (\vert u_1\vert,\ldots,\vert u_\ell\vert)\in\mathcal{N}_{\mathbb{S}^N}$ is a fully nontrivial $G$-invariant least energy solution to the problem~\eqref{Equation:SystemSphere}.
\end{lemma}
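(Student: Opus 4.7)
The plan is to set $\overline{v}:=(|u_1|,\ldots,|u_\ell|)$ and show that $\overline{v}\in\mathcal{N}_{\mathbb{S}^N}$ with $\mathcal{J}(\overline{v})=\mathcal{J}(\overline{u})=\inf_{\mathcal{N}_{\mathbb{S}^N}}\mathcal{J}$; standard arguments will then upgrade $\overline{v}$ to a positive $G$-invariant weak solution of \eqref{Equation:SystemSphere}. The starting ingredient is the pointwise inequality $\bigl||u_i(z)|-|u_i(\zeta)|\bigr|\leq |u_i(z)-u_i(\zeta)|$, which, inserted into the Gagliardo form \eqref{Hsnorm} of $\|\cdot\|_{H_g^s(\mathbb{S}^N)}$ and combined with $\int u_i^2\,dV_g=\int |u_i|^2\,dV_g$, yields
\[
\|v_i\|_{H_g^s(\mathbb{S}^N)}\leq\|u_i\|_{H_g^s(\mathbb{S}^N)},\qquad i=1,\ldots,\ell.
\]
Since the remaining terms of $\mathcal{J}$ in \eqref{Jdef} depend on $u_j$ only through $|u_j|$, for every $\overline{r}\in(0,\infty)^\ell$ the monotonicity
\[
\mathcal{J}(r_1v_1,\ldots,r_\ell v_\ell)-\mathcal{J}(r_1u_1,\ldots,r_\ell u_\ell)=\tfrac{1}{2}\sum_{i=1}^\ell r_i^2\bigl(\|v_i\|_{H_g^s(\mathbb{S}^N)}^2-\|u_i\|_{H_g^s(\mathbb{S}^N)}^2\bigr)\leq 0
\]
holds, and each $|u_i|$ is $G$-invariant.

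I will argue by contradiction that these inequalities are equalities for every $i$. Suppose otherwise, that $\|v_{i_0}\|<\|u_{i_0}\|$ strictly for some $i_0$. Setting $\tilde{\overline{v}}:=(v_i/\|v_i\|_{H_g^s(\mathbb{S}^N)})_i\in\mathcal{T}$, and granting the technical assertion $\tilde{\overline{v}}\in\mathcal{U}$ discussed below, Lemma~\ref{lem:U}(iii) produces $\overline{s}\in(0,\infty)^\ell$ with $\overline{s}\,\tilde{\overline{v}}\in\mathcal{N}_{\mathbb{S}^N}$, which I rewrite as $\overline{r}\,\overline{v}$ by setting $r_i:=s_i/\|v_i\|_{H_g^s(\mathbb{S}^N)}$. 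Since $\overline{u}\in\mathcal{N}_{\mathbb{S}^N}$ gives $\tilde{\overline{u}}:=(u_i/\|u_i\|_{H_g^s(\mathbb{S}^N)})_i\in\mathcal{U}$ (again by Lemma~\ref{lem:U}(iii)), Lemma~\ref{lem:U}(i) yields $\mathcal{J}(\overline{r}\,\overline{u})\leq\max_{\overline{t}}\mathcal{J}(\overline{t}\,\tilde{\overline{u}})=\mathcal{J}(\overline{u})$; combined with the monotonicity above,
\[
\mathcal{J}(\overline{r}\,\overline{v})\leq\mathcal{J}(\overline{r}\,\overline{u})-\tfrac{1}{2}r_{i_0}^2\bigl(\|u_{i_0}\|_{H_g^s(\mathbb{S}^N)}^2-\|v_{i_0}\|_{H_g^s(\mathbb{S}^N)}^2\bigr)<\mathcal{J}(\overline{u}),
\]
contradicting $\mathcal{J}(\overline{r}\,\overline{v})\geq\inf_{\mathcal{N}_{\mathbb{S}^N}}\mathcal{J}=\mathcal{J}(\overline{u})$. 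Hence $\|v_i\|=\|u_i\|$ for all $i$, and since the right-hand sides of the Nehari identities $\partial_i\mathcal{J}(\overline{u})u_i=0$ depend on $\overline{u}$ only through the $|u_j|$'s, they pass verbatim to $\partial_i\mathcal{J}(\overline{v})v_i=0$, so $\overline{v}\in\mathcal{N}_{\mathbb{S}^N}$ and $\mathcal{J}(\overline{v})=\mathcal{J}(\overline{u})$.

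Once $\overline{v}$ is a minimizer of $\mathcal{J}$ on $\mathcal{N}_{\mathbb{S}^N}$, the homeomorphism $\rho$ of Lemma~\ref{lem:U}(iii) together with Lemma~\ref{Lemma:Psi}(iii) identifies it with a fully nontrivial critical point of $\mathcal{J}$ in $\mathcal{H}_{\mathbb{S}^N}$, that is, a $G$-invariant weak solution of \eqref{Equation:SystemSphere}. Transferring to the Euclidean problem \eqref{Eq:SystemR^N} via Lemma~\ref{Lemma:EquivalentProblemsGinvariant}(iii) and applying the strong maximum principle for $(-\Delta)^s$ to each component promotes the nonnegativity $v_i\geq 0$ to $v_i>0$ on $\mathbb{S}^N$.

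The delicate step, and the one I expect to require the most care, is the bracketed assertion $\tilde{\overline{v}}\in\mathcal{U}$, equivalent to the existence of a positive maximum of $\overline{t}\mapsto\mathcal{J}(\overline{t}\,\tilde{\overline{v}})$ on $(0,\infty)^\ell$. The monotonicity of the first paragraph dominates this map by $\overline{t}\mapsto\mathcal{J}\bigl((t_i\|u_i\|/\|v_i\|)_i\,\tilde{\overline{u}}\bigr)$, which attains its finite maximum since $\tilde{\overline{u}}\in\mathcal{U}$; coupled with the facts that $\mathcal{J}(\overline{t}\,\tilde{\overline{v}})\to 0$ as $\overline{t}\to 0$, that $\mathcal{J}(\overline{t}\,\tilde{\overline{v}})>0$ for small $\overline{t}$, and that projection coefficients blow up along sequences approaching $\partial\mathcal{U}$ (Lemma~\ref{lem:U}(iv)), a maximizer must lie in the interior of $(0,\infty)^\ell$ by arguments analogous to those behind Lemma~\ref{lem:U} itself (cf.\ \cite{ClappSzulkin2019}).
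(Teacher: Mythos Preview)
Your argument is correct and follows the same line the paper indicates (its proof is omitted, pointing to the inequality $\langle |v|,|v|\rangle\le\langle v,v\rangle$ used in Proposition~\ref{Proposition:ExistenceDirichlet}). You supply considerably more detail than the paper does: the key reduction via $\bigl||u_i(z)|-|u_i(\zeta)|\bigr|\le|u_i(z)-u_i(\zeta)|$, the observation that all the integral terms in $\mathcal{J}$ and in the Nehari constraints depend only on the moduli, and the contradiction argument through Lemma~\ref{lem:U} are all sound.

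One remark on the step you flag as delicate. You can bypass the verification that $\tilde{\overline{v}}\in\mathcal{U}$ altogether: from $\overline{u}\in\mathcal{N}_{\mathbb{S}^N}$ you have $\|u_i\|_{H_g^s(\mathbb{S}^N)}^2=A_i$, where $A_i$ is the sum of the power and interaction integrals, and $A_i$ is unchanged when $u_j$ is replaced by $|u_j|$. Hence $\partial_i\mathcal{J}(\overline{v})v_i=\|v_i\|^2-A_i\le 0$ for every $i$, while for small $\overline{t}$ one has $\partial_i\mathcal{J}(\overline{t}\,\overline{v})(t_iv_i)>0$; the fixed-point/degree argument behind \cite[Proposition~3.1]{ClappSzulkin2019} (which you already invoke) then yields $\overline{t}\in(0,1]^\ell$ with $\overline{t}\,\overline{v}\in\mathcal{N}_{\mathbb{S}^N}$ directly, without normalizing to $\mathcal{T}$. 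Either way one lands on the same contradiction, and your conclusion that $\|v_i\|=\|u_i\|$ for all $i$, hence $\overline{v}\in\mathcal{N}_{\mathbb{S}^N}$ with $\mathcal{J}(\overline{v})=\mathcal{J}(\overline{u})$, is exactly what is needed. The final appeal to Lemma~\ref{Lemma:Psi}(iii) to promote the Nehari minimizer to a genuine critical point, and the strong maximum principle on the Euclidean side for strict positivity, are the standard closing steps.
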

\begin{proof}
The proof follows the ideas in Proposition ~\ref{Proposition:ExistenceDirichlet} and we omit it.
\end{proof}

\begin{proof}[Proof of Proposition~\ref{Proposition:MainFractionalSystems}]
	Lemma~\ref{Lemma:Psi} $(iv)$ implies that $\mathcal{U}$ is positively invariant under the negative pseudogradient flow of $\Psi$, so the usual deformation lemma holds true for $\Psi$, see e.g. \cite[Section II.3]{StruweBook} or \cite[Section 5.3]{WillemBook}. As $\Psi$ satisfies the $(PS)_c$-condition for every $c\in\mathbb{R}$, standard variational arguments show that $\Psi$ attains its minimum on $\mathcal{U}$ at some $\overline{u}$. By Lemma~\ref{Lemma:Psi}$(iii)$ and the Principle of Symmetric Criticality \cite{Palais1979}, $\overline{t}_{\overline{u}}\overline{u}$ is a $G$-invariant least energy fully nontrivial solution for the system~\eqref{Equation:SystemSphere}. Moreover, as $\Psi$ is even and $\mathrm{genus}(\mathcal{U})=\infty$, arguing as in the proof of Theorem 3.4 (c) in \cite{ClappSzulkin2019}, it follows that  $\Psi$ has an unbounded sequence of critical points. Using Lemma~\ref{Lemma:Psi} (iii), and the fact that $\Psi(\overline{u})=\mathcal{J}(\overline{t}_{\overline{u}}\overline{u})=\frac{s}{N}\|\overline{t}_{\overline{u}}\overline{u}\|^2$, the system~\eqref{Equation:SystemSphere} has an unbounded sequence of fully nontrivial $G$-invariant solutions.
\end{proof}

As a direct consequence of Proposition~\ref{Proposition:MainFractionalSystems} and Lemma \ref{Lemma:EquivalentProblemsGinvariant}, we have the following existence result for the Euclidean system.

\begin{corollary}\label{Corollary:ExistenceSystemeuclidean}
For any $s\in(0,1)$, the system~\eqref{Eq:SystemR^N} admits an unbounded sequence of $G$-invariant fully nontrivial solutions. One of them is positive in each of its components and attains~\eqref{Equation:InfimumNehariRN}. 
\end{corollary}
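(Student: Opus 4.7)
The plan is to transfer all the conclusions of Proposition~\ref{Proposition:MainFractionalSystems} from the sphere to Euclidean space by means of the isometric isomorphism $\iota_s$ provided by Proposition~\ref{Proposition:IsometricIsomorphism}, together with the equivalence of weak solutions given by Lemma~\ref{Lemma:EquivalentProblemsGinvariant}(iii). Since the hard analytic work (existence of infinitely many solutions via genus/Ljusternik–Schnirelmann theory on $\mathcal{U}$, the $(PS)_c$-condition recovered from the symmetric compact embedding of Lemma~\ref{Lemma:SymmetricSobolevEmbeddings}, and the existence of a positive least-energy minimizer via Lemma~\ref{Lemma:PositiveLeastEnergySolution}) has already been carried out on the sphere, what remains is purely a bookkeeping exercise.

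First I would apply Proposition~\ref{Proposition:MainFractionalSystems} to extract an unbounded sequence $(\overline{u}^{(k)})_{k\in\mathbb{N}}\subset \mathcal{N}_{\mathbb{S}^N}$ of fully nontrivial $G$-invariant weak solutions of~\eqref{Equation:SystemSphere}, together with a distinguished one $\overline{u}^{(0)}=(u^{(0)}_1,\ldots,u^{(0)}_\ell)$ that is positive in each component and attains the infimum in \eqref{Equation:InfimumNehari}. Then I would set $\overline{v}^{(k)}:=(\iota_s(u^{(k)}_1),\ldots,\iota_s(u^{(k)}_\ell))\in \mathcal{H}_{\rn}$ for each $k$. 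By \eqref{SymmetricIsometryDomains} each $\iota_s(u^{(k)}_i)$ belongs to $D^s(\mathbb{R}^N)^G$, and by Lemma~\ref{Lemma:EquivalentProblemsGinvariant}(iii) each $\overline{v}^{(k)}$ is a $G$-invariant weak solution of~\eqref{Eq:SystemR^N}. Full nontriviality is preserved because $\iota_s$ is an isomorphism (so $u_i^{(k)}\neq 0 \iff \iota_s(u_i^{(k)})\neq 0$).

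Next, using that $\iota_s$ is an isometry, I get $\|\overline{v}^{(k)}\|=\|\overline{u}^{(k)}\|_{\mathcal{H}_{\mathbb{S}^N}}$, which combined with the identity $\mathcal{J}(\overline{v})=\tfrac{s}{N}\|\overline{v}\|^2$ on $\mathcal{N}_{\rn}$ (see \eqref{Nrndef} and the line following it) together with its sphere analogue, gives $\mathcal{J}(\overline{v}^{(k)})=\mathcal{J}(\overline{u}^{(k)})$. Hence unboundedness of $\{\mathcal{J}(\overline{u}^{(k)})\}$ implies unboundedness of $\{\mathcal{J}(\overline{v}^{(k)})\}$, as required. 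For the distinguished solution $\overline{v}^{(0)}$, positivity of each component is immediate from $\iota_s(u)=\psi_s\, u\circ \sigma^{-1}$ and the fact that $\psi_s>0$ pointwise. Finally, to see that $\overline{v}^{(0)}$ attains \eqref{Equation:InfimumNehariRN}, I would note that the correspondence $\overline{u}\mapsto (\iota_s(u_1),\ldots,\iota_s(u_\ell))$ is a bijection between $\mathcal{N}_{\mathbb{S}^N}$ and $\mathcal{N}_{\rn}$ that preserves $\mathcal{J}$; therefore $\inf_{\mathcal{N}_{\rn}}\mathcal{J}=\inf_{\mathcal{N}_{\mathbb{S}^N}}\mathcal{J}=\mathcal{J}(\overline{u}^{(0)})=\mathcal{J}(\overline{v}^{(0)})$.

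I do not anticipate any real obstacle: all compactness and variational machinery is already encapsulated in Proposition~\ref{Proposition:MainFractionalSystems}, and the conformal factor $\psi_s$ enters cleanly through $\iota_s$. The only minor point worth checking carefully is that the bijection of Nehari sets is genuinely a bijection (both directions), which follows at once from Lemma~\ref{Lemma:EquivalentProblemsGinvariant}(iii) together with the identities~\eqref{Equation:Isometry}, \eqref{Equation:EquivalenceCriticalNorms2} and \eqref{Equation:EquivalentSystems}, ensuring that the Nehari constraints $\partial_i\mathcal{J}(\overline{u})u_i=0$ and $\partial_i\mathcal{J}(\overline{v})v_i=0$ are equivalent under the transformation.
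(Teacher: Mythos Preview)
Your proposal is correct and follows exactly the route the paper takes: the paper states the corollary as a direct consequence of Proposition~\ref{Proposition:MainFractionalSystems} and Lemma~\ref{Lemma:EquivalentProblemsGinvariant}, and you have simply fleshed out the details of that transfer via $\iota_s$ (preservation of full nontriviality, unboundedness, positivity through the positive factor $\psi_s$, and the bijection between $\mathcal{N}_{\mathbb{S}^N}$ and $\mathcal{N}_{\rn}$ preserving $\mathcal{J}$).
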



\section{Optimal partitions} \label{sec:euclidean}

By Corollary~\ref{Corollary:ExistenceDirichletRN}, the optimal $(G,\ell)$-partition problem in $\mathbb{R}^N$,
\begin{equation} \label{eq:4a}
\inf_{\{\Omega_1,\ldots,\Omega_\ell\}\in\mathfrak{P}_\ell}\;\sum_{i=1}^\ell c_{\Omega_i},\qquad\text{where }c_{\Omega_i}:=\inf_{\mathcal{M}_{\Omega_i}}J_{\Omega_i},
\end{equation}
is well defined, where $\mathfrak{P}_\ell$ is given in~\eqref{Prn} and, by Lemma
~\ref{Lemma:EquivalentProblemsGinvariant} and Proposition ~\ref{Proposition:ExistenceDirichlet}, it is equivalent to the optimal $(G,\ell)$-partition problem on the sphere~\eqref{Equation:OptimalPartitionProblemSphere}.  More precisely, setting $U_i:=\sigma^{-1}(\Omega_i)$, where $\sigma$ is the stereographic projection, we have that $\{U_1,\ldots,U_\ell\}$ solves the optimal $(G,\ell)$-partition problem~\eqref{Equation:OptimalPartitionProblemSphere} on $\mathbb{S}^N$ if and only if $\{\Omega_1,\ldots,\Omega_\ell\}$ solves the optimal $(G,\ell)$-partition problem~\eqref{eq:4a} in $\mathbb{R}^N$.

Note that, if $\{\Omega_1,\ldots,\Omega_\ell\}\in\mathfrak{P}_\ell$ and $v_i\in\mathcal{M}_{\Omega_i}$, then, since $v_iv_j=0$ for $i\neq j$, we have that $(v_1,\ldots,v_\ell)\in\mathcal{N}_{\rn}$ (see \eqref{Nrndef}) and $\mathcal{J}(v_1,\ldots,v_\ell)=J_{\Omega_1}(v_1)+\cdots+J_{\Omega_\ell}(v_\ell)$. Therefore, $\inf_{\mathcal{N}_{\rn}}\mathcal{J}\leq c_{\Omega_1}+\cdots+c_{\Omega_\ell}$ and, consequently,
\begin{equation} \label{eq:comparison}
\inf_{\mathcal{N}_{\rn}}\mathcal{J}\leq \inf_{\{\Omega_1,\ldots,\Omega_\ell\}\in\mathfrak{P}_\ell}\;\sum_{i=1}^\ell c_{\Omega_i}.
\end{equation}

Theorem~\ref{Theorem:MainOptimalPartition} can be restated in $\mathbb{R}^N$ as follows.

\begin{theorem} \label{thm:main_RN}
For each $i,j=1,\ldots,\ell$, $i\ne j$, let $(\eta_{ij,k})$ be a sequence of negative numbers such that $\eta_{ij,k}\to -\infty$ as $k\to\infty$, and  let $v_{k}=(v_{k,1},\ldots,v_{k,\ell})$ be a positive least energy fully nontrivial $G$-invariant solution to the system~\eqref{Eq:SystemR^N} with $\eta_{ij}=\eta_{ij,k}$. Then, after passing to a subsequence, we have that
\begin{itemize}
\item[$(a)$]$v_{k,i}\to v_{\infty,i}$ strongly in $D^{s}(\mathbb{R}^N)$,\, $v_{\infty,i}\geq 0$,\, $v_{\infty,i}$ is continuous and $v_{\infty,i}|_{\Omega_i}$ is a least energy solution to the problem~\eqref{eq:3a:intro} in $\Omega_i:=\{x\in\mathbb{R}^{N}:v_{\infty,i}(x)>0\}$, for each $i=1,\ldots,\ell$.
\item[$(b)$]$\{\Omega_1,\ldots,\Omega_\ell\}\in\mathcal{P}_\ell$ and it solves the optimal $(G,\ell)$-partition problem~\eqref{eq:4a} in $\mathbb{R}^{N}$.
\item[$(c)$]$\Omega_1,\ldots,\Omega_\ell$ are smooth and connected, $\overline{\Omega_1\cup\cdots\cup \Omega_\ell}=\mathbb{R}^{N}$ and, after reordering, we have that $\Omega_1,\ldots,\Omega_{\ell-1}$ are bounded, $\Omega_\ell$ is unbounded,
\begin{itemize}
\item[$(c_1)$] $\Omega_1\cong\mathbb{S}^{m-1}\times \mathbb{B}^{n}$,\quad $\Omega_i\cong\mathbb{S}^{m-1}\times\mathbb{S}^{n-1}\times(0,1)$ if  $i=2,\ldots,\ell-1$, and\quad $\Omega_\ell\cong\mathbb{R}^m\times \mathbb{S}^{n-1}$,
\item[$(c_2)$] $\overline{\Omega}_i\cap \overline{\Omega}_{i+1}\cong\mathbb{S}^{m-1}\times\mathbb{S}^{n-1}$ and\quad $\overline{\Omega}_i\cap \overline{\Omega}_j=\emptyset$\, if\, $|j-i|\geq 2$.
\end{itemize}
\end{itemize}
\end{theorem}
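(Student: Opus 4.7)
I would work directly in $\mathbb{R}^N$, treating the problem as the phase-segregation limit of the system~\eqref{Eq:SystemR^N} in the spirit of \cite{ClappFernandezSaldana2021}, with the $H^1$-based regularity replaced by the nonlocal Proposition~\ref{Proposition:Regularity} pulled back through the isometry $\iota_s$ of Proposition~\ref{Proposition:IsometricIsomorphism}. Critical-exponent compactness is supplied by the symmetric embedding $D^s(\mathbb{R}^N)^G\hookrightarrow L^{2^*_s}(\mathbb{R}^N)$ of Lemma~\ref{Lemma:SymmetricSobolevEmbeddings}.

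\textbf{Step 1 (bounds and weak/strong limits).} By Corollary~\ref{Corollary:ExistenceSystemeuclidean}, $\overline{v}_k\in\mathcal{N}_{\mathbb{R}^N}$ attains $\inf_{\mathcal{N}_{\mathbb{R}^N}}\mathcal{J}$, so $\frac{s}{N}\|\overline{v}_k\|^2=\mathcal{J}(\overline{v}_k)\le \inf_{\mathfrak{P}_\ell}\sum_i c_{\Omega_i}<\infty$ by \eqref{eq:comparison} and Corollary~\ref{Corollary:ExistenceDirichletRN}, uniformly in $k$. After extracting a subsequence, $v_{k,i}\rightharpoonup v_{\infty,i}$ in $D^s(\mathbb{R}^N)^G$ and $v_{k,i}\to v_{\infty,i}$ strongly in $L^{2^*_s}(\mathbb{R}^N)$, with $v_{\infty,i}\ge 0$.

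\textbf{Step 2 (coupling vanishes; limit equation).} Pairing the $i$-th equation with $v_{k,i}$ gives
\[
\|v_{k,i}\|^2 = \int_{\mathbb{R}^N} v_{k,i}^{2^*_s}\,dx + \sum_{j\ne i} b_{ij}\eta_{ij,k}\int_{\mathbb{R}^N} v_{k,j}^{a_{ij}}v_{k,i}^{b_{ij}}\,dx,
\]
so $|\eta_{ij,k}|\int v_{k,j}^{a_{ij}} v_{k,i}^{b_{ij}}\,dx$ is bounded, and since $|\eta_{ij,k}|\to\infty$, the integral tends to zero. Combined with strong $L^{2^*_s}$-convergence, this forces $v_{\infty,i}\, v_{\infty,j}=0$ a.e. for $i\ne j$. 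For any $\vartheta\in C^\infty_c(\mathbb{R}^N)$, Hölder's inequality bounds the coupling term in the weak formulation of equation $i$ by a constant times a fractional power of $|\eta_{ij,k}|\int v_{k,j}^{a_{ij}}v_{k,i}^{b_{ij}}\,dx$, hence it vanishes in the limit. Therefore $v_{\infty,i}$ weakly solves $(-\Delta)^s v_{\infty,i}=v_{\infty,i}^{2^*_s-1}$ on $\Omega_i:=\{v_{\infty,i}>0\}$. Strong convergence in $D^s$ follows from $\|v_{k,i}\|^2\le\int v_{k,i}^{2^*_s}\,dx$, strong $L^{2^*_s}$-convergence, and weak lower semicontinuity.

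\textbf{Step 3 (opening $\Omega_i$ and partition optimality).} The pullback $u_{\infty,i}:=\iota_s^{-1}(v_{\infty,i})\in H^s_g(\mathbb{S}^N)^G$ has, by Proposition~\ref{Proposition:Regularity}, a continuous representative off $\mathcal{Z}$; therefore $v_{\infty,i}$ is continuous on $\mathbb{R}^N\setminus\sigma(\mathcal{Z}\setminus\{-e_{N+1}\})$, a set of measure zero. Since $v_{\infty,i}\ge 0$, $\Omega_i$ is open. The strong convergence of Step~1 combined with Lemma~\ref{lem:away_froM_{d/2}} applied to $\overline{v}_k$ yields $\|v_{\infty,i}\|\ge d_0>0$, so $\Omega_i\ne\emptyset$; by Lemma~\ref{A}, $v_{\infty,i}|_{\Omega_i}\in D^s(\Omega_i)^G\cap\mathcal{M}_{\Omega_i}$. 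Then
\[
\inf_{\mathcal{N}_{\mathbb{R}^N}}\mathcal{J} = \lim_k \mathcal{J}(\overline{v}_k) = \sum_i J_{\Omega_i}(v_{\infty,i}) \ge \sum_i c_{\Omega_i} \ge \inf_{\mathfrak{P}_\ell}\sum_i c_{\Omega_i} \ge \inf_{\mathcal{N}_{\mathbb{R}^N}}\mathcal{J},
\]
where the last inequality is \eqref{eq:comparison}. Equality holds throughout, so $\{\Omega_1,\ldots,\Omega_\ell\}\in\mathfrak{P}_\ell$ realizes \eqref{eq:4a} and each $v_{\infty,i}|_{\Omega_i}$ is a least-energy Dirichlet solution, proving (a) and (b).

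\textbf{Step 4 (geometry; the main obstacle).} By Proposition~\ref{Proposition:IsometricIsomorphismIntroduction}, $u_{\infty,i}=w_i\circ f$ with $f(x,y)=|x|^2-|y|^2$, so $\sigma^{-1}(\Omega_i)=f^{-1}(A_i)$ for an open $A_i\subset[-1,1]$, and the $A_i$ partition $[-1,1]$ up to a null set. Once each $A_i$ is shown to be an interval and the intervals are consecutively ordered along $[-1,1]$, the topological descriptions in $(c_1)$--$(c_2)$ follow from the standard level-set picture $f^{-1}((a,b))\cong \mathbb{S}^{m-1}\times\mathbb{S}^{n-1}\times(a,b)$ for $(a,b)\Subset(-1,1)$ with the expected degenerations at $t=\pm 1$ (where $f^{-1}(\{1\})=\mathbb{S}^{m-1}\times\{0\}$ and $f^{-1}(\{-1\})=\{0\}\times\mathbb{S}^{n-1}$); boundary smoothness of $\partial\Omega_i$ follows from the smoothness of $f$ on $\mathbb{S}^N\setminus\mathcal{Z}$ together with bootstrap regularity for positive solutions of the fractional Yamabe equation. \emph{The main obstacle is the connectedness of each $A_i$.} In the local case one splits a solution on a disconnected set into pieces with strictly lower total energy, but the nonlocal operator $(-\Delta)^s$ does not split across disjoint components, so a naive splitting does not produce a competitor. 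I would instead exploit the one-dimensional reduction, in which $w_i$ satisfies a 1D nonlocal equation governed by the symbol $\varphi_{N,s}$ of Proposition~\ref{Proposition:RestrictedEigenfunctions:intro}, and rule out multi-interval $A_i$ by constructing from any disconnected partition a rearranged competitor $\widetilde{v}\in\mathcal{N}_{\mathbb{R}^N}$ with $\mathcal{J}(\widetilde{v})<\mathcal{J}(\overline{v}_\infty)$, contradicting the minimality established in Step~3.
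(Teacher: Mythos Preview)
Your Steps~1--3 track the paper's phase-separation argument (Theorem~\ref{thm:phase_separation}) closely, but there is one genuine slip in Step~2. You claim that for a generic test function $\vartheta$ the coupling term $|\eta_{ij,k}|\int v_{k,j}^{a_{ij}}v_{k,i}^{b_{ij}-1}|\vartheta|$ is bounded by a fractional power of $|\eta_{ij,k}|\int v_{k,j}^{a_{ij}}v_{k,i}^{b_{ij}}$; H\"older with exponents $b_{ij}/(b_{ij}-1)$ and $b_{ij}$ actually leaves an extra factor $|\eta_{ij,k}|^{1/b_{ij}}\to\infty$, so this route does not close. The paper never passes the equation to the limit against an arbitrary $\vartheta$: it only shows $\|v_{\infty,i}\|^2\le |v_{\infty,i}|_{2^*_s}^{2^*_s}$ (weak lower semicontinuity plus strong $L^{2^*_s}$ convergence), projects onto the Nehari set with $t_i\in(0,1]$, and then the energy chain $c_0\le \frac{s}{N}\sum_i\|t_iv_{\infty,i}\|^2\le\frac{s}{N}\sum_i\|v_{\infty,i}\|^2\le\liminf c_k\le c_0$ forces $t_i=1$ and strong $D^s$ convergence. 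The conclusion that $v_{\infty,i}$ solves \eqref{eq:3a:intro} on $\Omega_i$ then comes for free: it is a minimizer on $\mathcal{M}_{\Omega_i}$, hence a critical point.

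The real gap is Step~4. You correctly diagnose that nonlocal energies do not split across disjoint components, but your proposed ``rearranged competitor'' construction is left unspecified, and it is not clear how to build one. The paper's mechanism (Theorem~\ref{thm:partition}) is \emph{unique continuation for the fractional Laplacian}: whenever $\Lambda_1\subsetneq\Lambda$ are $G$-invariant open sets with $\Lambda\smallsetminus\overline{\Lambda_1}$ nonempty, one has the strict inequality $c_\Lambda<c_{\Lambda_1}$. Indeed, if $c_\Lambda=c_{\Lambda_1}$ then the zero-extension of a least-energy solution on $\Lambda_1$ lies in $\mathcal{M}_\Lambda$ with energy $c_\Lambda$, hence solves \eqref{eq:3a:intro} on $\Lambda$, yet vanishes together with $(-\Delta)^s$ of it on the open set $\Lambda\smallsetminus\overline{\Lambda_1}$, contradicting \cite{fall2014unique}. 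This strict domain monotonicity immediately forces $(0,\pi)\smallsetminus\bigcup_i\widetilde q(\Theta_i)$ to consist of exactly $\ell-1$ points (any gap could be absorbed into an adjacent $\Theta_i$ with strict gain), so each $\widetilde q(\Theta_i)$ is a single interval and the descriptions in $(c_1)$--$(c_2)$ follow from the explicit form of the level sets of $f$. No rearrangement or one-dimensional nonlocal equation is needed.
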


Theorem~\ref{thm:main_RN} follows from the next two theorems, which are of independent interest. Let
$$\widetilde{q}:=q\circ\sigma^{-1}:\mathbb{R}^N\to[0,\pi],$$
where $\sigma$ is the stereographic projection and $q$ is the $G$-orbit map of $\S^N$ defined as
\begin{align*}
    q(z_1,z_1):=\arccos(|z_1|^2-|z_2|^2),\qquad z_1\in \R^m,\ z_2\in \R^n.
\end{align*}
Writing $\rn=\r^m\times\r^{n-1}$, it is easy to see that $\widetilde q^{\,-1}(0) = \mathbb{S}^{m-1}\times\{0\}$ and $\widetilde q^{\,-1}(\pi) = \{0\}\times \r^{n-1}$.

\begin{theorem} \label{thm:partition}
Let $\{\Theta_1,\ldots,\Theta_\ell\}\in\mathfrak{P}_\ell$ be a solution to the optimal $(G,\ell)$-partition problem~\eqref{eq:4a}. Then, the following statements hold true.
\begin{itemize}
\item[$(i)$] There exist $a_1,\ldots,a_{\ell-1}\in(0,\pi)$ such that
$$(0,\pi)\smallsetminus\bigcup_{i=1}^\ell\widetilde{q}\,(\Theta_i)=\{a_1,\ldots,a_{\ell-1}\}.$$
Therefore, after reordering,
\begin{align*}
\Omega_1&:=\Theta_1\cup(\mathbb{S}^{m-1}\times\{0\})=\widetilde{q}\,^{-1}[0,a_1),\\
\Omega_i&:=\Theta_i=\widetilde{q}\,^{-1}(a_{i-1},a_i)\qquad\text{if }\; i=2,\ldots,\ell-1,\\
\Omega_\ell&:=\Theta_\ell\cup(\{0\}\times\mathbb{R}^{n-1})=\widetilde{q}\,^{-1}(a_{\ell-1},\pi].
\end{align*}
\item[$(ii)$]  $\Omega_1,\ldots,\Omega_\ell$ are smooth and connected, they satisfy $(c_1)$ and $(c_2)$ of \emph{Theorem}~\ref{thm:main_RN}, $\Omega_1,\ldots,\Omega_{\ell-1}$ are bounded, $\Omega_\ell$ is unbounded, $\overline{\Omega_1\cup\cdots\cup \Omega_\ell}=\mathbb{R}^{N}$, and $\{\Omega_1,\ldots,\Omega_\ell\}\in\mathfrak{P}_\ell$ is a solution to the optimal $(G,\ell)$-partition problem~\eqref{eq:4a}.
\end{itemize}
\end{theorem}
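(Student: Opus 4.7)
The plan is to analyze the orbit images $I_i := \widetilde{q}(\Theta_i)\cap(0,\pi)$ and their complement $E := (0,\pi)\setminus\bigcup_{i=1}^\ell I_i$ in the one-dimensional orbit space, then read off the $N$-dimensional geometry from the principal $G$-fibration of $\widetilde{q}$. Since $\widetilde{q}$ restricted to $\widetilde{q}^{\,-1}(0,\pi)$ is a submersion, each $I_i$ is open in $(0,\pi)$, $\Theta_i=\widetilde{q}^{\,-1}(I_i)$, the $I_i$ are pairwise disjoint, and $E$ is closed in $(0,\pi)$. I will show $E$ consists of exactly $\ell-1$ points and each $I_i$ is a single interval; after relabeling this gives (i), and (ii) is then a direct geometric consequence of the fibration.

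The backbone is a pair of variational inequalities specific to the nonlocal setting. \emph{Strict monotonicity}: if $\Omega\subsetneq\Omega'$ are $G$-invariant smooth open sets with $|\Omega'\setminus\Omega|>0$, then $c_{\Omega'}<c_\Omega$. Taking $u\in\mathcal{M}_\Omega$ positive (extended by zero to $\mathbb{R}^N$) and perturbing by $\varepsilon\varphi$ with $\varphi\in C_c^\infty(\Omega'\setminus\overline\Omega)^G$, $\varphi\geq 0$, the disjoint-support cross term
\[
\langle u,\varphi\rangle=-c_{N,s}\int_\Omega\int_{\mathrm{supp}\,\varphi}\frac{u(x)\varphi(y)}{|x-y|^{N+2s}}\,dx\,dy<0
\]
is strictly negative, so $\|u+\varepsilon\varphi\|^2<\|u\|^2$ for small $\varepsilon$, while $\int|u+\varepsilon\varphi|^{2_s^*}=\int|u|^{2_s^*}+\varepsilon^{2_s^*}\int\varphi^{2_s^*}$; projecting onto the Nehari manifold of $\Omega'$ yields energy strictly below $c_\Omega$. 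The same computation applied to $u_A+u_B$ yields \emph{strict subadditivity}: $c_{A\cup B}<c_A+c_B$ for disjoint $G$-invariant open sets $A,B$ at positive distance. Density of $\bigcup I_i$ in $(0,\pi)$ is then immediate: an open interval $(a,b)\subset E$ would let us enlarge some $\Theta_k$ by $\widetilde{q}^{\,-1}(a,b)$ and strictly lower the total cost by strict monotonicity, contradicting optimality.

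The \textbf{main obstacle} is the connectedness of each $I_i$. If $I_i$ had two components $K,K'$ with $K$ to the left of $K'$, density forces some $I_j$ ($j\neq i$) to have a component strictly between them; I propose the swap $\Theta_i':=\widetilde{q}^{\,-1}(I_i\setminus K')$, $\Theta_j':=\widetilde{q}^{\,-1}(I_j\cup K')$ and claim the total cost strictly decreases. This requires a quantitative combination of strict monotonicity (applied to $\Theta_j\subsetneq\Theta_j'$) and strict subadditivity (exploiting that the two pieces of $\Theta_i$ were separated and still are after removing $K'$) sharp enough to dominate the cost increase from shrinking $\Theta_i$; if a one-shot estimate is elusive, one deforms along a continuous family $t\mapsto\widetilde{q}^{\,-1}(I_j\cup K'_t)$ with $K'_t$ an increasing exhaustion of $K'$ and differentiates in $t$. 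Once each $I_i$ is connected and $\bigcup I_i$ is dense, the $\ell$ disjoint open intervals leave a complement $E$ of exactly $\ell-1$ points; relabeling them left-to-right yields $I_1=(0,a_1)$, $I_i=(a_{i-1},a_i)$ for $2\le i\le\ell-1$, $I_\ell=(a_{\ell-1},\pi)$, and adjoining $\widetilde{q}^{\,-1}(0)=\mathbb{S}^{m-1}\times\{0\}$ to $\Theta_1$ and $\widetilde{q}^{\,-1}(\pi)=\{0\}\times\mathbb{R}^{n-1}$ to $\Theta_\ell$ produces the decomposition in (i).

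Part (ii) is read directly from the principal $G$-fibration: $\widetilde{q}^{\,-1}[0,a_1)$ is a tubular neighborhood of the singular orbit $\mathbb{S}^{m-1}\times\{0\}$, diffeomorphic to $\mathbb{S}^{m-1}\times\mathbb{B}^n$; each middle piece $\widetilde{q}^{\,-1}(a_{i-1},a_i)$ is the trivial principal bundle $\mathbb{S}^{m-1}\times\mathbb{S}^{n-1}\times(0,1)$; and $\widetilde{q}^{\,-1}(a_{\ell-1},\pi]$ is a tubular neighborhood of $\{0\}\times\mathbb{R}^{n-1}$, diffeomorphic to $\mathbb{B}^m\times\mathbb{R}^{n-1}$. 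Consecutive closures meet exactly in the principal orbit $\widetilde{q}^{\,-1}(a_i)\cong\mathbb{S}^{m-1}\times\mathbb{S}^{n-1}$, non-consecutive ones are disjoint, smoothness and connectedness are immediate, $\Omega_1,\ldots,\Omega_{\ell-1}$ are bounded as preimages of subsets of $[0,\pi)$ bounded away from $\pi$, $\Omega_\ell$ is unbounded because $\{0\}\times\mathbb{R}^{n-1}\subset\Omega_\ell$, and $\overline{\bigcup\Omega_i}=\mathbb{R}^N$ since only the $\ell-1$ measure-zero interface orbits are missing. Finally, $\{\Omega_1,\ldots,\Omega_\ell\}\in\mathfrak{P}_\ell$ solves \eqref{eq:4a} because its cost equals that of $\{\Theta_1,\ldots,\Theta_\ell\}$, the two partitions differing only by the measure-zero singular orbits.
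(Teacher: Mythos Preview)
Your route to strict monotonicity is genuinely different from the paper's: you exploit the negative nonlocal cross term $\langle u,\varphi\rangle<0$ for disjointly supported nonnegative functions, whereas the paper argues by contradiction via the unique continuation principle for $(-\Delta)^s$ (if $c_{\Lambda}=c_{\Lambda_1}$ then the $\Lambda_1$-minimizer, extended by zero, would solve the equation on $\Lambda$ while vanishing on the open set $\Lambda_2$). Both arguments yield $c_{\Omega'}<c_\Omega$ whenever $\Omega\subsetneq\Omega'$ with $|\Omega'\setminus\Omega|>0$, so the density step and part (ii) go through either way. Your cross-term argument is more elementary and has the advantage of not invoking an external regularity/UCP result; the paper's argument is shorter and ties directly to the PDE.

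The genuine gap is exactly where you flag it. Your swap $I_i\mapsto I_i\setminus K'$, $I_j\mapsto I_j\cup K'$ need not decrease the total cost: in the nonlocal setting $c_{\widetilde q^{-1}(I_i\setminus K')}$ is \emph{strictly} larger than $c_{\Theta_i}$ (by your own strict monotonicity), and nothing you wrote bounds this increase by the decrease in $c_{\Theta_j}$. Strict subadditivity $c_{A\cup B}<c_A+c_B$ only gives $c_{\Theta_i}<c_{\Theta_i^{new}}+c_{\widetilde q^{-1}(K')}$, which is an inequality in the wrong direction to combine with $c_{\Theta_j^{new}}<c_{\Theta_j}$. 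The deformation idea is not a fix: differentiating the cost of the donor side in $t$ produces a term of the same sign as the obstacle. Note that this difficulty is specific to the nonlocal problem; in the local case $s=1$ one has $c_{A\cup B}=\min\{c_A,c_B\}$ for disconnected $A\cup B$, so removing a non-minimizing component costs nothing on the donor side and the swap works. The paper's proof at this step consists of a single ``Therefore'' after establishing $c_\Lambda<\min\{c_{\Lambda_1},c_{\Lambda_2}\}$ via UCP, so it does not spell out the combinatorial reduction either; but unique continuation is the sharper tool here, since it says a least-energy solution on a disconnected $\Theta_i$ is strictly positive on \emph{every} component, which is what one ultimately needs to rule out disconnected optimal pieces. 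If you want to complete your argument without UCP, you will need an inequality stronger than mere subadditivity---for instance a quantitative lower bound on the cross-term gain when merging $K'$ into the adjacent piece of $I_j$ that matches the loss on the $I_i$ side---and this is not available from the cross-term computation alone.
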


\begin{proof}
$(i):$ Note that Lemma~\ref{Lemma:SymmetricSobolevEmbeddings} implies, by a standard argument, that $c_{\Omega}:=\inf_{\mathcal{M}_{\Omega}}J_{\Omega}$ is attained for any $G$-invariant smooth open subset $\Omega$ of $\mathbb{R}^N$ and we may assume the minimizer is strictly positive in $\Omega$.

Let $a,b,c\in(0,\pi)$ with $a<b<c$, and set $\Lambda_1:=\widetilde{q}\,^{-1}(a,b)$,\; $\Lambda_2:=\widetilde{q}\,^{-1}(b,c)$,\; $\Lambda=\widetilde{q}\,^{-1}(a,c)$. Then,
$c_{\Lambda} \leq\min\{c_{\Lambda_1},c_{\Lambda_2}\},$ because $\Lambda_i\subset \Lambda$ for $i=1,2$.  We claim that $c_{\Lambda} <\min\{c_{\Lambda_1},c_{\Lambda_2}\}$.  Indeed, if 
$c_{\Lambda} = \min\{c_{\Lambda_1},c_{\Lambda_2}\}$ then, taking a least energy $G$-invariant solution to~\eqref{eq:3a:intro} in $\Lambda_1$ and extending it by 0 in $\Lambda\backslash \Lambda_1$ we obtain a nontrivial least energy $G$-invariant solution $u$ to~\eqref{eq:3a:intro} in $\Lambda$; but this would contradict the unique continuation principle for the fractional Laplacian (see, for instance, \cite[Theorem 1.4]{fall2014unique}).   Therefore, if $\{\Theta_1,\ldots,\Theta_\ell\}\in\mathfrak{P}_\ell$ is a solution to the optimal $(G,\ell)$-partition problem~\eqref{eq:4a}, then $(0,\pi)\smallsetminus\bigcup_{i=1}^\ell\widetilde{q}\,(\Theta)$ must consist of precisely $\ell-1$ points.

$(ii):$ Clearly, $\Omega_1,\ldots,\Omega_\ell$ are smooth and connected, they satisfy $(c_1)$ and $(c_2)$ of Theorem~\ref{thm:main_RN}, $\Omega_1,\ldots,\Omega_{\ell-1}$ are bounded, $\Omega_\ell$ is unbounded, $\mathbb{R}^{N}=\overline{\Omega_1\cup\cdots\cup \Omega_\ell}$, and $\{\Omega_1,\ldots,\Omega_\ell\}\in\mathfrak{P}_\ell$.

As $\Theta_i\subset \Omega_i$ we have that $c_{\Omega_i}\leq c_{\Theta_i}$ for all $i$. So, since $\{\Theta_1,\ldots,\Theta_\ell\}$ is an optimal partition, we conclude that $\{\Omega_1,\ldots,\Omega_\ell\}$ is also an optimal partition.
\end{proof}

\begin{theorem} \label{thm:phase_separation}
For each $i,j=1,\ldots,\ell$, $i\neq j$, let $(\eta_{ij,k})$ be a sequence of negative numbers such that $\eta_{ij,k}\to -\infty$ as $k\to\infty$, and  let $v_{k}=(v_{k,1},\ldots,v_{k,\ell})$ be a positive least energy fully nontrivial $G$-invariant solution to the system~\eqref{Eq:SystemR^N} with $\lambda_{ij}=\eta_{ij,k}$. Then, after passing to a subsequence, we have that
\begin{itemize}
\item[$(a)$]$v_{k,i}\to v_{\infty,i}$ strongly in $D^{s}(\mathbb{R}^N)$,\, $v_{\infty,i}\geq 0$,\, $v_{\infty,i}$ is continuous in $\mathbb{R}^N$ and $v_{\infty,i}|_{\Omega_i}$ is a least energy solution to the problem~\eqref{eq:3a:intro} in $\Omega_i:=\{x\in\mathbb{R}^{N}:v_{\infty,i}(x)>0\}$, for each $i=1,\ldots,\ell$.
\item[$(b)$]$\{\Omega_1,\ldots,\Omega_\ell\}\in\mathcal{P}_\ell$ and it solves the optimal $(G,\ell)$-partition problem~\eqref{eq:4a}.
\end{itemize}
\end{theorem}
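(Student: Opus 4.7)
The plan is to implement a standard phase-separation scheme adapted to the nonlocal, critical, symmetric setting. First, I would obtain uniform energy bounds by comparing against a fixed admissible partition. Given any $\{\Theta_1,\ldots,\Theta_\ell\}\in \mathfrak{P}_\ell$, choose positive least-energy $G$-invariant solutions $w_i$ of \eqref{eq:3a:intro} on $\Theta_i$ (Corollary~\ref{Corollary:ExistenceDirichletRN}), extended by zero. Since the $w_i$ have pairwise disjoint supports, $(w_1,\ldots,w_\ell)\in\mathcal N_{\rn}$ regardless of $\eta_{ij,k}$, and $\mathcal J(w_1,\ldots,w_\ell)=\sum_i c_{\Theta_i}$. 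Consequently,
\[
\frac{s}{N}\sum_{i=1}^\ell\|v_{k,i}\|^2 \;=\; \mathcal J(v_k)\;\leq\;\sum_{i=1}^\ell c_{\Theta_i},
\]
so $(v_{k,i})$ is bounded in $D^s(\rn)^G$. Extracting weakly convergent subsequences $v_{k,i}\rightharpoonup v_{\infty,i}$, the compact critical symmetric embedding of Lemma~\ref{Lemma:SymmetricSobolevEmbeddings} upgrades this to strong convergence in $L^{2_s^*}(\rn)$.

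Next, I would extract the segregation condition from the Nehari identity. Testing the $i$-th equation in \eqref{Eq:SystemR^N} against $v_{k,i}$ and using $\eta_{ij,k}<0$ yields
\[
\int_{\rn} v_{k,i}^{2_s^*}\;=\;\|v_{k,i}\|^2\;+\;\sum_{j\neq i}|\eta_{ij,k}|\,b_{ij}\int_{\rn} v_{k,j}^{a_{ij}}v_{k,i}^{b_{ij}}\,dx.
\]
Since the left-hand side is uniformly bounded by Sobolev, each interaction integral satisfies $|\eta_{ij,k}|\int v_{k,j}^{a_{ij}}v_{k,i}^{b_{ij}}\leq C$, so the integral itself tends to zero as $|\eta_{ij,k}|\to\infty$. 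Combined with strong $L^{2_s^*}$ convergence, this forces $v_{\infty,i}\,v_{\infty,j}=0$ almost everywhere for $i\neq j$; setting $\Omega_i:=\{v_{\infty,i}>0\}$, the limits have pairwise disjoint essential supports. Passing to the limit in the weak formulations---the coupling terms vanishing by the same estimate combined with Hölder's inequality---each $v_{\infty,i}$ is a weak $G$-invariant solution of \eqref{eq:3a:intro} on $\Omega_i$. Testing against $v_{\infty,i}$ gives $v_{\infty,i}\in\mathcal M_{\Omega_i}$; combining this with the Nehari identities for $v_{k,i}$ and the vanishing of the coupling integrals yields $\|v_{k,i}\|^2\to\|v_{\infty,i}\|^2$, hence strong convergence in $D^s(\rn)$.

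The main obstacle is the continuity of $v_{\infty,i}$ on all of $\rn$, which is what guarantees $\Omega_i$ is open. Via the isometry $\iota_s$ of Proposition~\ref{Proposition:IsometricIsomorphism}, $v_{\infty,i}$ corresponds to $u_{\infty,i}\in H_g^s(\SN)^G$, so Proposition~\ref{Proposition:Regularity} yields a representative continuous on $\SN\smallsetminus \mathcal Z$; this translates to continuity of $v_{\infty,i}$ on $\rn\smallsetminus \widetilde{\mathcal Z}$, where $\widetilde{\mathcal Z}:=\sigma(\mathcal Z\smallsetminus\{-e_{N+1}\})=(\mathbb S^{m-1}\times\{0\})\cup(\{0\}\times\R^{n-1})$. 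Continuity across the singular orbits $\widetilde{\mathcal Z}$ I would handle by first establishing an $L^\infty$ bound for $v_{\infty,i}$ through a Brezis-Kato / Moser iteration on its critical equation (uniform in $k$ via the strong convergence). Then at any $x_0\in\widetilde{\mathcal Z}$, either $v_{\infty,i}$ vanishes in a neighborhood (continuity is trivial) or, by the disjoint-support structure, the equation $(-\Delta)^s v_{\infty,i}=v_{\infty,i}^{2_s^*-1}$ holds on a full neighborhood of $x_0$ with bounded right-hand side, and classical interior regularity for the fractional Laplacian supplies a continuous representative there.

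With continuity established, each $\Omega_i$ is open and $G$-invariant, and Lemma~\ref{lem:away_froM_{d/2}} (adapted to $\mathcal N_{\rn}$) combined with the strong $D^s$ convergence forces $\|v_{\infty,i}\|\geq d_0>0$, so $\Omega_i\neq\emptyset$ and $\{\Omega_1,\ldots,\Omega_\ell\}\in\mathfrak P_\ell$. Finally, since $v_{\infty,i}|_{\Omega_i}\in\mathcal M_{\Omega_i}$, the chain
\[
\sum_{i=1}^\ell c_{\Omega_i}\;\leq\;\sum_{i=1}^\ell J_{\Omega_i}(v_{\infty,i})\;=\;\lim_{k\to\infty}\mathcal J(v_k)\;\leq\;\inf_{\{\Theta_i\}\in\mathfrak P_\ell}\sum_{i=1}^\ell c_{\Theta_i}
\]
holds (the middle equality using $v_{\infty,i}\in\mathcal M_{\Omega_i}$ to rewrite $J_{\Omega_i}(v_{\infty,i})=\frac{s}{N}\|v_{\infty,i}\|^2$ and the norm convergence, and the last inequality being the test-configuration bound). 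Since $\{\Omega_1,\ldots,\Omega_\ell\}\in\mathfrak P_\ell$ itself realizes the first sum, equality holds throughout and $\{\Omega_1,\ldots,\Omega_\ell\}$ solves the optimal $(G,\ell)$-partition problem~\eqref{eq:4a}.
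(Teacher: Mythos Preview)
Your overall plan is sound and tracks the paper's strategy (uniform bound via disjoint competitors, compact symmetric embedding, segregation from the Nehari identity, optimality chain). There are, however, two genuine gaps where the order of logic breaks down.

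\textbf{Strong convergence and Nehari membership.} You propose to pass to the limit in the weak formulation of \eqref{Eq:SystemR^N} to conclude that $v_{\infty,i}$ solves \eqref{eq:3a:intro} on $\Omega_i$, and then to deduce $v_{\infty,i}\in\mathcal M_{\Omega_i}$ and strong $D^s$ convergence. But the coupling term tested against $\varphi\in C_c^\infty(\Omega_i)$ is $|\eta_{ij,k}|\,b_{ij}\int v_{k,j}^{a_{ij}}v_{k,i}^{b_{ij}-1}\varphi$, and the only available bound $|\eta_{ij,k}|\int v_{k,j}^{a_{ij}}v_{k,i}^{b_{ij}}\leq C$ combined with H\"older gives at best $|\eta_{ij,k}|\int v_{k,j}^{a_{ij}}v_{k,i}^{b_{ij}-1}|\varphi|\lesssim |\eta_{ij,k}|^{1/b_{ij}}\to\infty$; there is no a priori reason these terms vanish. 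Likewise, in the Nehari identity for $v_{k,i}$ the quantity $|\eta_{ij,k}|\int v_{k,j}^{a_{ij}}v_{k,i}^{b_{ij}}$ is only bounded, not known to tend to zero, so you cannot conclude $\|v_{k,i}\|^2\to|v_{\infty,i}|_{2_s^*}^{2_s^*}$ this way. The paper avoids this entirely: from weak lower semicontinuity and strong $L^{2_s^*}$ convergence one gets $\|v_{\infty,i}\|^2\leq |v_{\infty,i}|_{2_s^*}^{2_s^*}$, so there is a unique $t_i\in(0,1]$ with $(t_1v_{\infty,1},\ldots,t_\ell v_{\infty,\ell})\in\mathcal N_0$, and the sandwich
\[
c_0\;\leq\;\tfrac{s}{N}\sum_i\|t_iv_{\infty,i}\|^2\;\leq\;\tfrac{s}{N}\sum_i\|v_{\infty,i}\|^2\;\leq\;\liminf_k c_k\;\leq\;c_0
\]
forces $t_i=1$ and $\|v_{k,i}\|\to\|v_{\infty,i}\|$ simultaneously. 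That $v_{\infty,i}$ is an actual \emph{solution} then follows a posteriori, because it attains $c_{\Omega_i}$ on $\mathcal M_{\Omega_i}$.

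\textbf{Continuity across the singular orbits.} Your dichotomy ``either $v_{\infty,i}$ vanishes near $x_0$ or the scalar equation holds on a full neighborhood'' is circular: to know the equation holds near $x_0\in\widetilde{\mathcal Z}$ you would need $\Omega_i$ to contain a neighborhood of $x_0$, which is precisely the continuity you are trying to prove; and on $\rn\setminus\Omega_i$ you have no PDE for $v_{\infty,i}$ at all (the free boundary may carry singular contributions, as is typical in segregation limits). The paper proceeds in a different order: it works only on $\rn\setminus Y$, defines $\Theta_i:=\{x\in\rn\setminus Y: v_{\infty,i}(x)>0\}$ (open by Proposition~\ref{Proposition:Regularity}), shows $\{\Theta_i\}$ already solves \eqref{eq:4a} using merely $v_{\infty,i}\in\mathcal M_{\Theta_i}$, and then invokes Theorem~\ref{thm:partition}, which forces any optimal partition to consist of preimages of intervals under $\tilde q$ and hence to be smooth. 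Continuity of $v_{\infty,i}$ on all of $\rn$ then comes from standard regularity for the fractional Dirichlet problem on the smooth domain $\Omega_i$, not from an interior argument at points of $\widetilde{\mathcal Z}$.
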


\begin{proof}
To highlight the role of $\eta_{ij,k}$, we write $\mathcal{J}_k$ and $\mathcal{N}_k$ for the functional $\mathcal{J}$ and the set $\mathcal{N}_{\rn}$ associated to the system~\eqref{Eq:SystemR^N} with $\eta_{ij}=\eta_{ij,k}$. By assumption,
$$c_k:= \inf_{\mathcal{N}_k} \mathcal{J}_k =\mathcal{J}_k(v_k)=\frac{s}{N}\sum_{i=1}^\ell\|v_{k,i}\|^2.$$
We define
\begin{align*}
\mathcal{N}_0:=\{(v_1,\ldots,v_\ell)\in\mathcal{H}_{\rn}:\,&v_i\neq 0,\;\|v_i\|^2=|v_i|_{2_s^*}^{2_s^*},  \text{ and }v_iv_j=0\text{ a.e. in }\mathbb{R}^N \text{ if }i\neq j\}.
\end{align*}
Then, $\mathcal{N}_0\subset\mathcal{N}_k$ for all $k\in\mathbb{N}$ and, consequently, 
$$0<c_k\leq c_0:=\inf\left\{\frac{s}{N}\sum_{i=1}^\ell\|v_i\|^2:(v_1,\ldots,v_\ell)\in\mathcal{N}_0\right\}<\infty.$$
So, after passing to a subsequence, using Lemma~\ref{Lemma:SymmetricSobolevEmbeddings} we get that $v_{k,i} \rightharpoonup v_{\infty,i}$ weakly in $D_{0}^{s,2}(\mathbb{R}^N)^G$, $v_{k,i} \to v_{\infty,i}$ strongly in $L^{2_s^*}(\mathbb{R}^N)$ and $v_{k,i} \to v_{\infty,i}$ a.e. in $\mathbb{R}^N$, for each $i=1,\ldots,\ell$. Hence, $v_{\infty,i} \geq 0$. Moreover, as $\partial_i\mathcal{J}_k(v_k)[v_{k,i}]=0$, we have that, for each $j\neq i$,
\begin{align*}
0&\leq\int_{\mathbb{R}^N}\eta_{ij,k}|v_{k,j}|^{a_{ij}}|v_{k,i}|^{b_{ij}}\leq \frac{|v_{k,i}|^{2_s^*}_{2_s^*}}{-\eta_{ij,k}}\leq \frac{C}{-\eta_{ij,k}}.
\end{align*}
Then, Fatou's lemma yields  that
$$0 \leq \int_{\mathbb{R}^N}|v_{\infty,j}|^{a_{ij}}|v_{\infty,i}|^{b_{ij}} \leq \liminf_{k \to \infty} \int_{\mathbb{R}^N}|v_{k,j}|^{a_{ij}}|v_{k,i}|^{b_{ij}} = 0.$$
Hence, $v_{\infty,j} v_{\infty,i} = 0$ a.e. in $\mathbb{R}^N$. On the other hand, as shown in \cite[Proposition 3.1]{ClappSzulkin2019}, using Sobolev's inequality we see that
$$0<d_0 \leq \|v_{k,i}\|^2 \leq |v_{k,i}|_{2_s^*}^{2_s^*}\qquad\text{for all }k\in\mathbb{N},\;i=1,\ldots,\ell.$$
So, as $v_{k,i} \to v_{\infty,i}$ strongly in $L^{2_s^*}(\mathbb{R}^N)$, we conclude that $v_{\infty,i}\neq 0$. And, as $v_{k,i} \rightharpoonup v_{\infty,i}$ weakly in $D^{s}(\mathbb{R}^N)$, we get that
\begin{equation} \label{eq:comparison2}
\|v_{\infty,i}\|^2 \leq |v_{\infty,i}|_{2_s^*}^{2_s^*}\qquad\text{for all }i=1,\ldots,\ell.
\end{equation}
Since $v_{\infty,i}\neq 0$, there is a unique $t_i\in(0,\infty)$ such that $\|t_iv_{\infty,i}\|^2 = |t_iv_{\infty,i}|_{2_s^*}^{2_s^*}$. Then, 
\begin{align*}
    (t_1v_{\infty,1},\ldots,t_\ell v_{\infty,\ell})\in \mathcal{N}_0.
\end{align*}
The inequality~\eqref{eq:comparison2} implies that $t_i\in (0,1]$. Therefore,
\begin{align*}
c_0 &\leq \frac{s}{N}\sum_{i=1}^\ell\|t_iv_{\infty,i}\|^2 \leq \frac{s}{N}\sum_{i=1}^\ell\|v_{\infty,i}\|^2
\leq \frac{s}{N}\liminf_{k\to\infty}\sum_{i=1}^\ell\|v_{k,i}\|^2=\liminf_{k\to\infty} c_k \leq c_0.
\end{align*}
Hence, $v_{k,i} \to v_{\infty,i}$ strongly in $D^{s}(\mathbb{R}^N)^G$,\; $t_i=1$, yielding 
\begin{equation}\label{eq:limit}
\|v_{\infty,i}\|^2 = |v_{\infty,i}|_{2_s^*}^{2_s^*},\qquad\text{and}\qquad\frac{s}{N}\sum_{i=1}^\ell\|v_{\infty,i}\|^2 = \lim_{k\to\infty} c_k.
\end{equation}

Set $Y_1:=\mathbb{S}^{m-1}\times\{0\}$, $Y_2:=\{0\}\times\mathbb{R}^{n-1}$, and $Y:=Y_1\cup Y_2$. Proposition~\ref{Proposition:Regularity} (which was proved in Section~\ref{regularity} above), together with Proposition~\ref{Proposition:IsometricIsomorphism}, imply that $v_{\infty,i}|_{\mathbb{R}^N\smallsetminus Y}$ is continuous. Consequently, 
\begin{align*}
\Theta_i:=\{x\in\mathbb{R}^{N}\smallsetminus Y:v_{\infty,i}(x)>0\}    
\end{align*}
is $G$-invariant and open in $\mathbb{R}^N$.  Since $v_{\infty,i}v_{\infty,j}=0$ if $i\neq j$, we have that $\Theta_i\cap\Theta_j=\emptyset$ if $i\neq j$.  Set $\Omega_i:=\operatorname{int}(\overline{\Theta_i})$.  Then $\Omega_i$ is a nonempty, $G-$invariant, open, smooth, $\Omega_i\cap\Omega_j=\emptyset$ if $i\neq j$, and $u_{\infty,i}(x)=0$ in $\R^N\backslash \Omega_i$.  By Lemma~\ref{A}, $u_{\infty,i}\in D^{s}(\Omega_i)^G$ and, by~\eqref{eq:limit}, $u_{\infty,i}\in \mathcal{M}_{\Omega_i}$ and 
\begin{equation*}
\sum_{i=1}^\ell c_{\Theta_i}\leq\frac{s}{N}\sum_{i=1}^\ell\|v_{\infty,i}\|^2 = \lim_{k\to\infty} c_k \leq \inf_{(\Phi_1,\ldots,\Phi_\ell)\in\mathfrak{P}_\ell}\;\sum_{i=1}^\ell c_{\Phi_i}^G.
\end{equation*}
This shows that $\{\Theta_1,\ldots,\Theta_\ell\}$ solves the optimal $(G,\ell)$-partition problem~\eqref{eq:4a}. 
 
Reordering this partition as indicated in Theorem~\ref{thm:partition}, and setting $\Omega_1:=\Theta_1\cup Y_1$, $\Omega_\ell:=\Theta_\ell\cup Y_2$ and $\Omega_i:=\Theta_i$ if $i\neq 1,\ell$, we have that $\{\Omega_1,\ldots,\Omega_\ell\}\in\mathfrak{P}_\ell$ and $c_{\Omega_i}=c_{\Theta_i}$. As $v_{\infty,i}|_{\Omega_i}\in\mathcal{M}_{\Omega_i}$ and $J_{\Omega_i}(v_{\infty,i}|_{\Omega_i}) = c_{\Omega_i}$, the function $v_{\infty,i}|_{\Omega_i}$ solves problem~\eqref{eq:3a:intro} in $\Omega_i$. Since $\Omega_i$ is smooth by Theorem~\ref{thm:partition}, we have that $v_{\infty,i}$ is continuous in $\mathbb{R}^N$ and $\Omega_i=\{x\in\mathbb{R}^{N}:v_{\infty,i}(x)>0\}$. This concludes the proof. 
\end{proof} \medskip

\begin{proof}[Proof of Theorem~\ref{thm:main_RN}]
This result follows immediately from Theorems~\ref{thm:partition} and~\ref{thm:phase_separation}.
\end{proof}

\begin{proof}[Proof of Theorem~\ref{Theorem:MainOptimalPartition}]
It is a direct consequence of Theorem~\ref{thm:main_RN}, together with Lemma 
\ref{Lemma:EquivalentProblemsGinvariant} and the isometry~\eqref{SymmetricIsometryDomains}.
\end{proof}

\begin{proof}[Proof of Theorem~\ref{Theorem:MainOptimalPartitionPart1}]
  This follows from Theorem~\ref{Theorem:MainOptimalPartition} and Proposition \ref{Proposition:MainFractionalSystems}.
\end{proof}

\begin{remark}\label{schs}
In the local setting $s=1$, it is possible to use the limit profiles $v_{\infty,i}$ to build a solution of the Yamabe problem in $\rn$, namely, the function $v:=\sum_{i=1}^\ell (-1)^{i-1}v_{i,\infty}$ is a solution to $-\Delta v = |v|^{2^*-2}v$ in $\rn,$ see \cite[Theorem 4.1 (iii)]{CSS21}.  However, this strongly relies on the locality of the problem. In fact, the same construction does not work in the nonlocal setting $s\in(0,1)$.  To be more precise, let $J_{\R^n}$ be as in~\eqref{JOdef} and let
\begin{align*}
\mathcal{E}_{\rn}=\left\{u \in \mathcal{M}_{\rn}\::\: J_{\rn}^{\prime}(u) u^{+}=J_{\rn}^{\prime}(u)u^{-}=0 \text { and } u^{ \pm} \neq 0\right\},
\end{align*}
where $u^{+}(x)=\max \{u(x), 0\}, u^{-}(x)=\min \{u(x), 0\}$ and $\mathcal{M}_{\rn}$ is as in~\eqref{JOdef}.  Note that the set $\mathcal{E}_{\rn}$ contains all the $G$-invariant sign-changing solutions of the problem $(-\Delta)^s v = |v|^{2_s^*-2}v$ with $v\in D^{s}(\rn)$. This set is used in \cite{TWW15} to show the existence of least-energy nodal solutions for a fractional nonlinear problem in a bounded domain.  Now, let $\ell=2$ and $v:=v_{1,\infty}-v_{2,\infty}$, where $v_{1,\infty}$ and $v_{2,\infty}$ are the limit profiles given by Theorem~\ref{thm:phase_separation} for $s\in (0,1)$. Then,
\begin{align*}
J_{\rn}^{\prime}(v)v^{+} 
=\langle v^+,v^+\rangle+\langle v^-,v^+\rangle - \int_{\rn}\vert v^+\vert^{2^\ast_{s}}=\langle v^-,v^+\rangle>0,
\end{align*}
where we used that $\langle v^+,v^+\rangle=\langle v_{1,\infty},v_{1,\infty}\rangle=\int_{\rn}\vert v^+\vert^{2^\ast_{s}}$ and that
\begin{align*}
\langle v^-,v^+\rangle=-c_{N,s} \, p.v.\int_{\mathbb{R}^N}\int_{\mathbb{R}^N}\frac{v^+(x)v^-(y)}{\vert x - y\vert^{N+2s}}\, dx\, dy>0.
\end{align*}
As a consequence, $v=v_{1,\infty}-v_{2,\infty}$ cannot belong to $\mathcal{E}_{\rn}$ and therefore it cannot be a sign-changing solution. 
\end{remark}

\paragraph{Acknowledgments}
We thank Manuel Domínguez de la Iglesia for helpful discussions.

\appendix 

\section{On the conformal fractional Laplacian on the sphere} \label{ap:sec}

In this appendix we show Lemma \ref{lem:1}. To this end, we follow the calculations in \cite[Proposition 6.4.3]{DM18} with two differences. Firstly, we correct a small miscalculation in \cite[Proposition 6.4.3]{DM18} arising from a confusion in the use of the stereographic projection with respect to the north pole and the south pole. Secondly, in \cite[Proposition 6.4.3]{DM18} the computation of the constant $A_{N,s}$ relies on scattering theory. In our proof we obtain this constant by a direct computation.

\begin{proof}[Proof of Lemma \ref{lem:1}]
By \eqref{sigma}, we have for $x=\sigma(z)$ that $|x|^2 = \frac{|z'|^2}{(1+z_{N+1})^2} = \frac{1-z_{N+1}^2}{(1+z_{N+1})^2} = \frac{1-z_{N+1}}{1+z_{N+1}}.$ Hence,
\begin{align}\label{eq:4}
1+|x|^2 = \frac{2}{1+z_{N+1}}.
\end{align}
Letting $y = \sigma(\zeta)$ one also gets that
\begin{align*}
|x-y|^2
&= |x|^2+|y|^2-2x\cdot y
=\frac{1-z_{N+1}}{1+z_{N+1}}+\frac{1-\zeta_{N+1}}{1+\zeta_{N+1}}-\frac{2z'\cdot\zeta'}{(1+z_{N+1})(1+\zeta_{N+1})}\\
&= \frac{2-2z\cdot \zeta}{(1+z_{N+1})(1+\zeta_{N+1})}
= \frac{|z-\zeta|^2}{(1+z_{N+1})(1+\zeta_{N+1})}.
\end{align*}
From \eqref{eq:4}, $|z-\zeta|^2 = (1+z_{N+1})(1+\zeta_{N+1})|x-y|^2 = \frac{4|x-y|^2}{(1+|x|^2)(1+|y|^2)}.$ Let us now compute the left-hand side of \eqref{eq:2} using the previous identities and the change of variables given by $y = \sigma(\zeta)$ with determinant-Jacobian given by $\left(\frac{2}{1+|y|^2}\right)^N = \left(1+\zeta_{N+1}\right)^{-N}.$ Substituting this into the integral, we obtain
\begin{align*}
    &\psi_s^{-\frac{N+2s}{N-2s}}(-\Delta)^s(\psi_sv)(x)= 2^{-2s}c_{N,s}(1+|x|^2)^{N/2+s}p.v.\int_{\mathbb R^N} \frac{(1+|x|^2)^{-N/2+s}v(x)-(1+|y|^2)^{-N/2+s}v(y)}{|y-x|^{N+2s}}dy,\\
    &= c_{N,s}(1+z_{N+1})^{-N/2-s}p.v.\int_{\mathbb S^N} [(1+z_{N+1})^{N/2-s}u(z)-(1+\zeta_{N+1})^{N/2-s}u(\zeta)] \frac{(1+z_{N+1})^{N/2+s}(1+\zeta_{N+1})^{N/2+s-N}}{|\zeta-z|^{N+2s}}dV_g(\zeta)\\
    &= c_{N,s} \, p.v.\int_{\mathbb S^N} \left[\left(\frac{1+z_{N+1}}{1+\zeta_{N+1}}\right)^{N/2-s}u(z)-u(\zeta)\right]\frac{dV_g(\zeta)}{|\zeta-z|^{N+2s}}\\
    &= c_{N,s} \, p.v.\int_{\mathbb S^N} \frac{u(z)-u(\zeta)}{|\zeta-z|^{N+2s}}dV_g(\zeta) + u(z)c_{N,s} \, p.v.\int_{\mathbb S^N} \left[\left(\frac{1+z_{N+1}}{1+\zeta_{N+1}}\right)^{N/2-s}-1\right]\frac{dV_g(\zeta)}{|\zeta-z|^{N+2s}}.
\end{align*}

We focus now on the second term, returning the change of variables from before
\begin{align*}
    &c_{N,s} \, p.v.\int_{\mathbb S^N} \left[\left(\frac{1+z_{N+1}}{1+\zeta_{N+1}}\right)^{N/2-s}-1\right]\frac{dV_g(\zeta)}{|\zeta-z|^{N+2s}}\\
    &= 4^{-s} c_{N,s} \, p.v.\int_{\mathbb R^N} \left[\left(\frac{1+|y|^2}{1+|x|^2}\right)^{N/2-s}-1\right]\frac{(1+|x|^2)^{N/2+s}(1+|y|^2)^{N/2+s-N}}{|y-x|^{N+2s}}dy\\
    &= 4^{-s} c_{N,s}(1+|x|^2)^{N/2+s} p.v.\int_{\mathbb R^N} \left[(1+|x|^2)^{-N/2+s}-(1+|y|^2)^{-N/2+s}\right]\frac{dy}{|y-x|^{N+2s}}\\
    &= 4^{-s} (1+|x|^2)^{N/2+s} (-\Delta)^s (1+|x|^2)^{-N/2+s}. 
\end{align*}

To compute $(-\Delta)^s (1+|x|^2)^{-N/2+s}$, let $F_s:\rn\backslash \{0\}\to \r$ be the fundamental solution of $(-\Delta)^s$ in $\rn$, namely $F_s(x):=\kappa_{N,s}|x|^{2s-N}$ with $\kappa_{N,s}:=\frac{\Gamma(\frac{N}{2}-s)}{4^s\pi^\frac{N}{2}\Gamma(s)}$, see, for instance, \cite{B16} or \cite[Definition 5.6]{ajs}.  Let $P_s:\rn\times(0,\infty)\to \r$ denote the Poisson kernel for the operator $L_s := \Delta_{x,t} + \frac{1-2s}{t}\partial_t$, namely  $P_s(x,t) := p_{N,s}\frac{t^{2s}}{(t^2+|x|^2)^{\frac{N}{2}+s}}$ with $p_{N,s}:= \frac{\Gamma(\frac{N}{2}+s)}{\pi^\frac{N}{2}\Gamma(s)},$ see, for instance, \cite[eqn (2.3)]{MR2354493}. Note that, using polar coordinates,
\begin{align*}
\int_{\rn}\frac{t^{2s}}{(t^2+|x|^2)^{\frac{N}{2}+s}}\, dx
=\left(\frac{2\pi^\frac{N}{2}}{\Gamma(\frac{N}{2})}\right)\int_0^\infty \frac{\rho^{N-1}}{(1+\rho^2)^{\frac{N}{2}+s}}\, d\rho
=\frac{1}{p_{N,s}}\qquad \text{for all }t>0.
\end{align*}
A direct computation shows that the function $\Gamma_s(x,t) := \kappa_{N,s}(t^2+|x|^2)^{-N/2+s}$ satisfies that $L_s\Gamma_s=0$ in $\rn\times(0,\infty)$ and $\Gamma_s(x,0)=F_s(x)$ for $x\neq 0$. Then $\kappa_{N,s}(1+|x|^2)^{-N/2+s} =\Gamma_s(x,1)= [P_s(\cdot,1)\ast F_s](x)$, where $\ast$ denotes convolution. Hence,
\begin{align*}
\kappa_{N,s}(-\Delta)^s(1+|x|^2)^{-N/2+s}
&= (-\Delta)^s(P_s(\cdot,1)\ast F_s)
= P_s(\cdot,1)\ast (-\Delta)^sF_s
= P_s(\cdot,1)\ast\delta_0
= p_{N,s}(1+|x|^2)^{-N/2-s}.
\end{align*}
Note that $\frac{p_{N,s}}{\kappa_N,s} = 4^s\frac{\Gamma(N/2+s)}{\Gamma(N/2-s)}.$ Then, 
\begin{align*}
c_{N,s} \, p.v.\int_{\mathbb S^N} \left[\left(\frac{1+z_{N+1}}{1+\zeta_{N+1}}\right)^{N/2-s}-1\right]\frac{dV_g(\zeta)}{|\zeta-z|^{N+2s}}
&= 4^{-s} (1+|x|^2)^{N/2+s} (-\Delta)^s (1+|x|^2)^{-N/2+s}\\
&= \frac{\Gamma(N/2+s)}{\Gamma(N/2-s)}(1+|x|^2)^{N/2+s} (1+|x|^2)^{-N/2-s}=\frac{\Gamma(N/2+s)}{\Gamma(N/2-s)},
\end{align*}
as claimed.
\end{proof}


\begin{flushleft}

\textbf{Héctor A. Chang-Lara}\\
Matemáticas Básicas, Centro de Investigación en Matemáticas\\
Jalisco S/N, Col. Valenciana\\
C.P. 36000 Guanajuato, Mexico\\
\texttt{hector.chang@cimat.mx}

\medskip

\textbf{Juan Carlos Fernández}\\

Departamento de Matemáticas, Facultad de Ciencias\\
Universidad Nacional Autónoma de México\\
Circuito Exterior, Ciudad Universitaria\\
04510 Coyoacán, Ciudad de México, Mexico\\
\texttt{jcfmor@ciencias.unam.mx}

\medskip

\textbf{Alberto Saldaña}\\
Instituto de Matemáticas\\
Universidad Nacional Autónoma de México \\
Circuito Exterior, Ciudad Universitaria\\
04510 Coyoacán, Ciudad de México, Mexico\\
\texttt{alberto.saldana@im.unam.mx}
\medskip

\end{flushleft}

\end{document}